\newtheorem{definition}{Definition}[section]
\newtheorem{proposition}[definition]{Proposition}
\newtheorem{lemma}[definition]{Lemma}
\newtheorem{theorem}[definition]{Theorem}
\newtheorem{corollary}[definition]{Corollary}
\newtheorem{remark}[definition]{Remark}
\title{Polynomial conserved quantities of Lie applicable surfaces}
\author[F.E. Burstall et al.]{Francis E. Burstall, Udo Hertrich-Jeromin, Mason Pember and Wayne Rossman}
\address[F.~Burstall]{Department of Mathematical Sciences, University of Bath, Bath, BA2~7AY. United Kingdom.}
\email{f.e.burstall@bath.ac.uk}
\address[U.~Hertrich-Jeromin]{Vienna University of Technology,
Wiedner Hauptstra\ss e 8-10/104, A-1040 Vienna. Austria.}
\email{udo.hertrich-jeromin@tuwien.ac.at}
\address[M.~Pember]{Vienna University of Technology,
Wiedner Hauptstra\ss e 8-10/104, A-1040 Vienna. Austria.}
\email{mason@geometrie.tuwien.ac.at}
\address[W.~Rossman]{Department of Mathematics, Kobe University,
Rokko, Kobe 657-8501. Japan.}
\email{wayne@math.kobe-u.ac.jp}
\begin{document}
\maketitle

\begin{abstract}
Using the gauge theoretic approach for Lie applicable surfaces, we characterise certain subclasses of surfaces in terms of polynomial conserved quantities. These include isothermic and Guichard surfaces of conformal geometry and $L$-isothermic surfaces of Laguerre geometry. In this setting one can see that the well known transformations available for these surfaces are induced by the transformations of the underlying Lie applicable surfaces. We also consider linear Weingarten surfaces in this setting and develop a new B\"{a}cklund-type transformation for these surfaces. 
\end{abstract}

\section{Introduction}
In~\cite{D1911i,D1911ii,D1911iii}, Demoulin defined a class of surfaces satisfying the equation 
\begin{equation}
\label{eqn:introdem} 
\left(\frac{V}{U}\frac{\sqrt{E}}{\sqrt{G}}\frac{\kappa_{1,u}}{\kappa_{1} - \kappa_{2}}\right)_{v} + \epsilon^{2}  \left(\frac{U}{V}\frac{\sqrt{G}}{\sqrt{E}}\frac{\kappa_{2,v}}{\kappa_{1}-\kappa_{2}}\right)_{u}=0,
\end{equation}
given in terms of curvature line coordinates $(u,v)$, where $U$ is a function of $u$, $V$ is a function of $v$, $\epsilon\in\{0,1,i\}$, $E$ and $G$ denote the usual coefficients of the first fundamental form and $\kappa_{1}$ and $\kappa_{2}$ denote the principal curvatures. In the case that $\epsilon\neq 0$, one calls these surfaces $\Omega$-surfaces and if $\epsilon=0$ we call them $\Omega_{0}$-surfaces. Together, $\Omega$- and $\Omega_{0}$-surfaces form the applicable surfaces of Lie sphere geometry (see~\cite{B1929}). By using the hexaspherical coordinate model of Lie~\cite{L1872} it is shown in~\cite{MN2006} that these surfaces are the deformable surfaces of Lie sphere geometry. This gives rise to a gauge theoretic approach for these surfaces which is developed in~\cite{C2012i}. That is, the definition of Lie applicable surfaces is equated to the existence of a certain 1-parameter family of flat connections. This approach lends itself well to the transformation theory of these surfaces. 
The gauge theoretic approach is explored further in~\cite{P2016}, for which this paper is meant as a sequel.

In~\cite{BS2012, S2008} a gauge theoretic approach for isothermic surfaces in M\"obius geometry is developed. By considering polynomial conserved quantities of the arising 1-parameter family of flat connections, one can characterise familiar subclasses of surfaces in certain space forms. For example, constant mean curvature surfaces in space forms are characterised by the existence of linear conserved quantities~\cite{BCwip,BS2012}. By then applying the transformation theory of the underlying isothermic surface, one obtains transformations for these subclasses. In this paper we apply this framework to Lie applicable surfaces. For example, we show that isothermic surfaces, Guichard surfaces and $L$-isothermic surfaces are $\Omega$-surfaces admitting a linear conserved quantity. This is particularly beneficial to the study of the transformations of these surfaces. For example, we will show that the Eisenhart transformation for Guichard surfaces (see~\cite{E1914}), which was given a conformally invariant treatment in~\cite{BCwip}, is induced by the Darboux transformation of the underlying $\Omega$-surface. One can also show (see~\cite{P2015}) that the special $\Omega$-surfaces of~\cite{E1916} can be characterised as $\Omega$-surfaces admitting quadratic conserved quantities, however we will not explore this further in this paper. 

In~\cite{BHR2010, BHR2012} linear Weingarten surfaces in space forms are characterised as Lie applicable surfaces whose isothermic sphere congruences take values in certain sphere complexes. In this paper we shall review this theory from the viewpoint of polynomial conserved quantities. We shall see that non-tubular linear Weingarten surfaces in space forms are $\Omega$-surfaces admitting a 2-dimensional vector space of linear conserved quantities, whereas tubular linear Weingarten surfaces are $\Omega_{0}$-surfaces admitting a constant conserved quantity. By using this approach we obtain a new B\"{a}cklund-type transformation for linear Weingarten surfaces. 

\textit{Acknowledgements.} We would like to thank G. Szewieczek for reading through this paper and providing many useful comments. This work has been partially supported by the Austrian Science Fund (FWF) through the research project P28427-N35 ``Non-rigidity and Symmetry breaking" as well as
by FWF and the Japan Society for the Promotion of Science (JSPS) through the FWF/JSPS Joint Project grant I1671-N26  ``Transformations and Singularities". The fourth author was also supported by the two JSPS grants Grant-in-Aid for Scientific Research (C) 15K04845 and (S) 24224001 (PI: M.-H. Saito).

\section{Preliminaries}
Given a vector space $V$ and a manifold $\Sigma$, we shall denote by $\underline{V}$ the trivial bundle $\Sigma\times V$. Given a vector subbundle $W$ of $\underline{V}$, we define the derived bundle of $W$, denoted $W^{(1)}$, to be the subset of $\underline{V}$ consisting of the images of sections of $W$ and derivatives of sections of $W$ with respect to the trivial connection on $\underline{V}$. In this paper, most of the derived bundles that appear will be vector subbundles of the trivial bundle, but in general this is not always the case as, for example, the rank of the derived bundle may not be constant over $\Sigma$. 

Throughout this paper we shall be considering the pseudo-Euclidean space $\mathbb{R}^{4,2}$, i.e., a 6-dimensional vector space equipped with a non-degenerate symmetric bilinear form $(\,,\,)$ of signature $(4,2)$. Let $\mathcal{L}$ denote the lightcone of $\mathbb{R}^{4,2}$. The orthogonal group $\textrm{O}(4,2)$ acts transitively on $\mathcal{L}$. The lie algebra $\mathfrak{o}(4,2)$ of $\textrm{O}(4,2)$ is well known to be isomorphic to the exterior algebra $\wedge^{2}\mathbb{R}^{4,2}$ via the identification 
\[ a\wedge b\, (c) = (a,c)b - (b,c)a,\]
for $a,b,c\in\mathbb{R}^{4,2}$. We shall frequently use this fact throughout this paper. 

By $\odot$ we shall denote the symmetric product on $\mathbb{R}^{4,2}$ that for $a,b\in \mathbb{R}^{4,2}$ gives $a\odot b = \frac{1}{2}(a\otimes b + b\otimes a) \in S^{2}(\mathbb{R}^{4,2})$, i.e., 
\[
a\odot b\,(v) = \frac{1}{2}((a,v)b + (b,v)a)\quad\text{and}\quad
a\odot b\,(v,w) = \frac{1}{2}((a,v)(b,w) + (a,w)(b,v)), 
\]
for $v,w\in \mathbb{R}^{4,2}$. 

Given a manifold $\Sigma$, we define the following product of two vector-valued 1-forms $\omega_{1},\omega_{2}\in\Omega^{1}(\underline{\mathbb{R}}^{4,2})$:
\[ \omega_{1}\curlywedge \omega_{2}(X,Y) := \omega_{1}(X)\wedge \omega_{2}(Y) - \omega_{1}(Y)\wedge \omega_{2}(X),\]
for $X,Y\in\Gamma T\Sigma$. Hence, $\omega_{1}\curlywedge \omega_{2}$ is a $2$-form taking values in $\wedge^{2}\underline{\mathbb{R}}^{4,2}$. Notice that $\omega_{1}\curlywedge \omega_{2} = \omega_{2}\curlywedge \omega_{1}$. 

Recall that we also have the following product for two $\mathfrak{so}(4,2)$-valued 1-forms $A,B\in \Omega^{1}(\mathfrak{so}(4,2))$:
\[ [A\wedge B](X,Y) = [A(X),B(Y)]-[A(Y),B(X)],\]
for $X,Y\in \Gamma T\Sigma$.

\subsection{Legendre maps}
Let $\mathcal{Z}$ denote the Grassmannian of isotropic 2-dimensional subspaces of $\mathbb{R}^{4,2}$.
Suppose that $\Sigma$ is a 2-dimensional manifold and let $f:\Sigma\to \mathcal{Z}$ be a smooth map. By viewing $f$ as a 2-dimensional subbundle of the trivial bundle $\underline{\mathbb{R}}^{4,2}$, we may define a tensor, analogous to the solder form defined in~\cite{BC2004,BR1990}, 
\[ \beta: T\Sigma \to Hom(f,f^{(1)}/f),\quad X\mapsto (\sigma \mapsto d_{X}\sigma \, mod\, f).\]
In accordance with~\cite[Theorem 4.3]{C2008} we have the following definition:

\begin{definition}
$f$ is a Legendre map if $f$ satisfies the contact condition, $f^{(1)}\le f^{\perp}$, and the immersion condition, $\ker\beta =\{0\}$. 
\end{definition}

\begin{remark}
\label{rem:legder}
The contact and immersion conditions together imply that $f^{(1)}=f^{\perp}$ (see~\cite{P1985}). 
\end{remark}

Note that $f^{\perp}/f$ is a rank $2$ subbundle of $\underline{\mathbb{R}}^{4,2}/f$ that inherits a positive definite metric from $\mathbb{R}^{4,2}$. 

\begin{definition}
\label{def:curvsph}
Let $p\in\Sigma$. Then a 1-dimensional subspace $s(p)\le f(p)$ is a curvature sphere of $f$ at $p$ if there exists a non-zero subspace $T_{s(p)}\le T_{p}\Sigma$ such that $\beta(T_{s(p)})s(p) = 0$. We call the maximal such $T_{s(p)}$ the curvature space of $s(p)$. 
\end{definition}

It was shown in~\cite{P1985} that at each point $p$ there is either one or two curvature spheres. We say that $p$ is an \textit{umbilic point of $f$} if there is exactly one curvature sphere $s(p)$ at $p$ and in that case $T_{s(p)}=T_{p}\Sigma$. 

\begin{lemma}
\label{lem:umbilic}
Suppose that $q\in (\mathbb{R}^{4,2})^{\times}$ such that $q\in \Gamma f$. Then $f$ is totally umbilic.  
\end{lemma}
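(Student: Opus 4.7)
The plan is to exhibit the line spanned by $q$ as a curvature sphere with full curvature space at every point of $\Sigma$, and then use the immersion condition to rule out any competing curvature sphere.

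First, I would note that, since $q \in (\mathbb{R}^{4,2})^{\times}$ is a constant vector viewed as a section of the trivial bundle $\underline{\mathbb{R}}^{4,2}$, its derivative vanishes: $dq = 0$. Because $q \in \Gamma f$ by hypothesis, this gives, for every $p \in \Sigma$ and every $X \in T_p\Sigma$,
\[ \beta(X)\, q(p) = d_X q \,\,\mathrm{mod}\,\, f = 0. \]
Thus $\langle q(p)\rangle \le f(p)$ is a curvature sphere at $p$ whose curvature space $T_{\langle q(p)\rangle}$ contains, and therefore equals, all of $T_p\Sigma$.

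Next, I would argue that $\langle q(p)\rangle$ is the \emph{only} curvature sphere at $p$. Suppose, for contradiction, that some $s'(p) \le f(p)$ distinct from $\langle q(p)\rangle$ is also a curvature sphere, and choose $r \in f(p)$ generating $s'(p)$, so that $\{q(p),r\}$ is a basis of $f(p)$. By the definition of curvature sphere, there exists a nonzero $X \in T_{s'(p)}$ with $\beta(X)r = 0$. Combined with $\beta(X)q(p) = 0$ from the previous step, this forces $\beta(X)$ to vanish on all of $f(p)$, contradicting the immersion condition $\ker\beta = \{0\}$.

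Combining the two steps with the definition of umbilic point stated immediately before the lemma, every $p \in \Sigma$ is umbilic and hence $f$ is totally umbilic. There is no real obstacle here: the entire content is the one-line observation that a constant section lying in $f$ is automatically annihilated by the solder form $\beta$, with the immersion condition doing the rest of the work.
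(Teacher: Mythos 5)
Your proof is correct and follows essentially the same route as the paper: observe that the constant section $q$ satisfies $dq=0$, so $\langle q\rangle$ is a curvature sphere with curvature space all of $T_p\Sigma$, and then invoke the immersion condition $\ker\beta=\{0\}$ to exclude any second curvature sphere. Your second step merely spells out the detail (that $\beta(X)$ would annihilate a basis of $f(p)$) which the paper leaves implicit, so there is no substantive difference.
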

\begin{proof}
If $\mathfrak{q}\in(\mathbb{R}^{4,2})^{\times}$ then $d\mathfrak{q}=0$. Therefore $\mathfrak{q}\in\Gamma f$ implies that $s:=\langle \mathfrak{q}\rangle$ is a curvature sphere congruence of $f$ with curvature subbundle $T_{s}=T\Sigma$. By the immersion condition of $f$ this is the only curvature sphere congruence of $f$ and thus $f$ is totally umbilic. 
\end{proof}

Away from umbilic points we have that the curvature spheres form two rank 1 subbundles $s_{1},s_{2}\le f$ with respective curvature subbundles $T_{1}=\bigcup_{p\in \Sigma}T_{s_{1}(p)}$ and $T_{2}=\bigcup_{p\in \Sigma}T_{s_{2}(p)}$. We then have that $f=s_{1}\oplus s_{2}$ and $T\Sigma = T_{1}\oplus T_{2}$. A conformal structure $c$ is induced on $T\Sigma$ as the set of all indefinite metrics whose null lines are $T_{1}$ and $T_{2}$.

\subsection{Symmetry breaking}
\label{sec:symbreak}
In~\cite{C2008} a modern account is given of how one breaks symmetry from Lie geometry to space form geometry and how $\textrm{O}(4,2)$ is a double cover for the group of Lie sphere transformations. These are the transformations that map oriented spheres to oriented spheres and preserve the oriented contact of spheres. In this subsection we shall recall the process of symmetry breaking. 

Firstly, we have the following technical result regarding projections of Legendre maps:
\begin{lemma}
\label{lem:totumb}
Suppose that $f:\Sigma\to \mathcal{Z}$ is a Legendre map and $\mathfrak{q}\in\mathbb{R}^{4,2}\backslash\{0\}$. Then
\begin{enumerate}
\item if $\mathfrak{q}$ is timelike then $f$ never belongs to $\langle\mathfrak{q}\rangle^{\perp}$,
\item  if $\mathfrak{q}$ is spacelike then the set of points $p\in\Sigma$ where $f(p)\le \langle\mathfrak{q}\rangle^{\perp}$ is a closed set with empty interior,
\item $f\le \langle \mathfrak{q}\rangle^{\perp}$ if and only if $\mathfrak{q}\in\Gamma f$, in which case $f$ is totally umbilic.
\end{enumerate}
\end{lemma}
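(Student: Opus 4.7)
The whole lemma revolves around rephrasing the containment $f(p)\le\langle\mathfrak{q}\rangle^{\perp}$ as the orthogonality condition $(\mathfrak{q},\sigma(p))=0$ for all $\sigma\in\Gamma f$, and then exploiting the Legendre condition $f^{(1)}=f^{\perp}$ recalled in Remark~\ref{rem:legder} by differentiating this orthogonality.

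For (i), I would argue by signature. If $\mathfrak{q}$ is timelike then $\langle\mathfrak{q}\rangle^{\perp}$ inherits from $\mathbb{R}^{4,2}$ a bilinear form of signature $(4,1)$, whose Witt index equals $\min(4,1)=1$. Hence $\langle\mathfrak{q}\rangle^{\perp}$ contains no isotropic $2$-plane, so the $2$-dimensional isotropic subspace $f(p)$ cannot sit inside it at any point $p$.

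For (iii), first assume $\mathfrak{q}\in\Gamma f$; since $f$ is isotropic we have $f\le f^{\perp}$, so $\mathfrak{q}\in\Gamma f^{\perp}$, which is exactly $f\le\langle\mathfrak{q}\rangle^{\perp}$. Conversely, suppose $f\le\langle\mathfrak{q}\rangle^{\perp}$. For every local section $\sigma\in\Gamma f$ and every $X\in\Gamma T\Sigma$, differentiating $(\sigma,\mathfrak{q})=0$ and using $d\mathfrak{q}=0$ yields $(d_{X}\sigma,\mathfrak{q})=0$; thus $\mathfrak{q}$ is orthogonal to $f^{(1)}$. By Remark~\ref{rem:legder} this is the same as $\mathfrak{q}\perp f^{\perp}$, i.e.\ $\mathfrak{q}\in\Gamma f$. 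The totally umbilic conclusion then follows directly from Lemma~\ref{lem:umbilic}.

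For (ii), closedness of $\{p:f(p)\le\langle\mathfrak{q}\rangle^{\perp}\}$ is clear because the condition $(\sigma,\mathfrak{q})=0$ on a basis of local sections of $f$ is a closed condition. To see the interior is empty, suppose for contradiction that $f|_{U}\le\langle\mathfrak{q}\rangle^{\perp}$ on some non-empty open $U\subseteq\Sigma$. The argument of part (iii), applied on $U$, then forces $\mathfrak{q}\in\Gamma f|_{U}$; but $f|_{U}$ is isotropic, so $\mathfrak{q}$ would be lightlike, contradicting its spacelike character.

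The only substantive step is the differentiation argument underlying (iii); parts (i) and (ii) are then short consequences, the former from signature considerations and the latter by combining the closedness of the condition with (iii) to rule out an open locus.
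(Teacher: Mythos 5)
Your proof is correct, but it is organised differently from the paper's. For (iii) you argue directly: differentiating $(\sigma,\mathfrak{q})=0$ against the constant $\mathfrak{q}$ gives $\mathfrak{q}\perp f^{(1)}$, and Remark~\ref{rem:legder} turns this into $\mathfrak{q}\perp f^{\perp}$, hence $\mathfrak{q}\in\Gamma(f^{\perp})^{\perp}=\Gamma f$, after which Lemma~\ref{lem:umbilic} finishes; you then obtain the empty-interior statement in (ii) by localising this argument and noting that $\mathfrak{q}\in\Gamma f$ would force $\mathfrak{q}$ to be lightlike. The paper instead proves (ii) first (the containment $f\le\langle\mathfrak{q}\rangle^{\perp}$ on an open set gives $f^{(1)}\le\langle\mathfrak{q}\rangle^{\perp}$ while $\mathfrak{q}\in\Gamma f^{\perp}$, contradicting $f^{(1)}=f^{\perp}$ since a spacelike $\mathfrak{q}$ is not orthogonal to itself), and then deduces (iii) by a trichotomy on the causal character of $\mathfrak{q}$: parts (i) and (ii) leave only the lightlike case, where $\mathfrak{q}\in\Gamma f^{\perp}$ implies $\mathfrak{q}\in\Gamma f$ because the maximal isotropic subspaces of $\mathbb{R}^{4,2}$ are $2$-dimensional. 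Both routes hinge on the same two ingredients, the identity $f^{(1)}=f^{\perp}$ and Lemma~\ref{lem:umbilic}; yours replaces the Witt-index maximality argument in (iii) by the double-perpendicular $(f^{\perp})^{\perp}=f$, which makes (iii) self-contained and independent of (i) and (ii), while the paper's ordering makes (ii) self-contained and (iii) a consequence. Part (i) is the same signature argument in both. The only point worth making explicit in your write-up is that the restriction of a Legendre map to an open set is again Legendre, so Remark~\ref{rem:legder} may indeed be invoked on $U$ in your proof of (ii); this is immediate since the contact and immersion conditions are local.
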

\begin{proof}
If $\mathfrak{q}$ is timelike then $\langle\mathfrak{q}\rangle^{\perp}$ has signature $(4,1)$ and cannot contain the 2-dimensional lightlike subspace $f(p)$ for each $p\in\Sigma$. 

Suppose that $\mathfrak{q}$ is spacelike and that on some open subset $U\subset \Sigma$,  $f\le \langle \mathfrak{q}\rangle^{\perp}$. Without loss of generality, assume that $U=\Sigma$. Then this implies that $f^{(1)}\le \langle\mathfrak{q}\rangle^{\perp}$ and $\mathfrak{q}\in\Gamma f^{\perp}$. Hence, $f^{(1)}\neq f^{\perp}$, contradicting Remark~\ref{rem:legder}.

Therefore, if $f\le \langle \mathfrak{q}\rangle^{\perp}$ then the only possibility left to consider is that $\mathfrak{q}$ is lightlike. Then since the maximal lightlike subspaces of $\mathbb{R}^{4,2}$ are  2-dimensional, $\mathfrak{q}\in\Gamma f^{\perp}$ if and only if $\mathfrak{q}\in\Gamma f$. By Lemma~\ref{lem:umbilic} this is the case only if $f$ is totally umbilic. 
\end{proof}

We shall often refer to a non-zero vector $\mathfrak{q}\in\mathbb{R}^{4,2}$ as a \textit{sphere complex}. As Lemma~\ref{lem:totumb} shows, for a Legendre map $f:\Sigma\to\mathcal{Z}$, generically $f\cap\langle \mathfrak{q}\rangle^{\perp}$ defines a rank 1 subbundle of $f$. 

\subsubsection{Conformal geometry}
\label{subsubsec:conf}
Let $\mathfrak{p}\in\mathbb{R}^{4,2}$ such that $\mathfrak{p}$ is not lightlike. If $\mathfrak{p}$ is timelike then $\langle \mathfrak{p}\rangle^{\perp} \cong \mathbb{R}^{4,1}$ and defines a Riemannian conformal geometry. If $\mathfrak{p}$ is spacelike then $\langle \mathfrak{p}\rangle^{\perp} \cong \mathbb{R}^{3,2}$ and defines a Lorentzian conformal geometry. We consider elements of 
\[\mathbb{P}(\mathcal{L}\cap\langle\mathfrak{p}\rangle^{\perp})\]
to be points and refer to $\mathfrak{p}$ as a point sphere complex. 

\begin{remark}
In the case that $\mathfrak{p}$ is timelike, $\mathbb{P}(\mathcal{L}\cap\langle\mathfrak{p}\rangle^{\perp})$ is the conformal $3$-sphere (see \cite{H2003}).
\end{remark} 

The elements of $\mathbb{P}(\mathcal{L}\backslash \langle\mathfrak{p}\rangle^{\perp})$ give rise to spheres in the following way: suppose that $s\in \mathbb{P}(\mathcal{L}\backslash \langle\mathfrak{p}\rangle^{\perp})$. Now $s\oplus\langle\mathfrak{p}\rangle$ is a $(1,1)$-plane and thus 
\[ V:= (s\oplus\langle\mathfrak{p}\rangle)^{\perp}\]
is a $(3,1)$-plane. The projective lightcone of $V$ is then diffeomorphic to $\mathbb{S}^{2}$ and we thus identify $V$ with a sphere in $\mathbb{P}(\mathcal{L}\cap\langle\mathfrak{p}\rangle^{\perp})$. 

Conversely, suppose that $V\le \langle\mathfrak{p}\rangle^{\perp}$ is a $(3,1)$-plane. Then $V^{\perp}$ is a $(1,1)$-plane in $\mathbb{R}^{4,2}$ containing $\mathfrak{p}$ and we identify the two null lines of $V^{\perp}$ with the sphere defined by $V$ with opposite orientations.

\begin{remark}
Those Lie sphere transformations that fix the point sphere complex are the conformal transformations of $\langle \mathfrak{p}\rangle^{\perp}$. 
\end{remark}

As is standard in conformal geometry (see, for example,~\cite{H2003}), we may break symmetry further by choosing a vector $\mathfrak{q}\in\langle\mathfrak{p}\rangle^{\perp}$. Then 
\[ \mathfrak{Q}^{3}:=\{y\in\mathcal{L}: \,(y,\mathfrak{q})=-1,\, (y,\mathfrak{p})=0\}\]
is isometric to a space form with sectional curvature $\kappa = -|\mathfrak{q}|^{2}$. If we assume that $|\mathfrak{p}|^{2}=\pm 1$, then 
\[ \mathfrak{P}^{3}:=\{y\in\mathcal{L}:\, (y,\mathfrak{q})=0,\, (y,\mathfrak{p})=-1\}\]
can be identified (see~\cite{H2003}) with the space of hyperplanes (complete, totally geodesic hypersurfaces) in this space form. 

Suppose that $f:\Sigma\to \mathcal{Z}$ is a Legendre map. Then, by Lemma~\ref{lem:totumb}, on a dense open subset of $\Sigma$, $\Lambda:=f\cap\langle \mathfrak{p}\rangle^{\perp}$ is a rank 1 subbundle of $f$. Using the identification of $\wedge^{2}\mathbb{R}^{4,2}$ with the skew-symmetric endomorphisms on $\mathbb{R}^{4,2}$, we have for any $\tau\in \Gamma (\wedge^{2}f)$ that $\tau\mathfrak{p} \in \Gamma f$ and, since $\tau$ is skew-symmetric, $\tau\mathfrak{p}\perp\mathfrak{p}$. Hence, 
\[ \Lambda= (\wedge^{2}f) \mathfrak{p}.\]
Away from points where $\Lambda\perp\mathfrak{q}$, we have that for any nowhere zero $\tau\in\Gamma(\wedge^{2}f)$,
\[ \mathfrak{f}:= - \frac{\tau \mathfrak{p}}{(\tau\mathfrak{p},\mathfrak{q})}\quad\text{and}\quad 
\mathfrak{t}:= - \frac{\tau \mathfrak{q}}{(\tau\mathfrak{q},\mathfrak{p})}\]
are the projections of $f$ into $\mathfrak{Q}^{3}$ and $\mathfrak{P}^{3}$, respectively. We can then write $f = \langle \mathfrak{f},\mathfrak{t}\rangle$. 

\begin{definition}
We call $\mathfrak{f}$ the space form projection of $f$ and $\mathfrak{t}$ the tangent plane\footnote{Note that ``plane'' here means totally geodesic hypersurface in the space form $\mathfrak{Q}^{3}$.} congruence of $f$. 
\end{definition}

One can easily see that:

\begin{lemma}
\label{lem:spfwedge}
The space form projection of $f$ into $\mathfrak{Q}^{3}$ exists at $p\in\Sigma$ if and only if the kernel of the linear map
\[ \wedge^{2}f\to\mathbb{R}, \quad \tau\mapsto (\tau\mathfrak{p},\mathfrak{q}) \]
is trivial at $p$. 
\end{lemma}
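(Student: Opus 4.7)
The plan is to observe that this lemma is essentially a tautological unpacking of the definition of $\mathfrak{f}$, once one notes the rank constraint on $\wedge^{2}f$. Since $f$ has rank $2$, the bundle $\wedge^{2}f$ has rank $1$. Therefore the linear map
\[
\wedge^{2}f(p)\to \mathbb{R},\qquad \tau\mapsto (\tau\mathfrak{p},\mathfrak{q}),
\]
is a linear map from a $1$-dimensional space to $\mathbb{R}$, so its kernel is either trivial or all of $\wedge^{2}f(p)$. Triviality of the kernel is equivalent to the condition that $(\tau\mathfrak{p},\mathfrak{q})(p)\neq 0$ for some (hence every) nonzero $\tau\in \wedge^{2}f(p)$.

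Next, I would simply recall the recipe preceding the lemma: one picks a nowhere-zero local section $\tau\in\Gamma(\wedge^{2}f)$ and sets $\mathfrak{f}=-\tau\mathfrak{p}/(\tau\mathfrak{p},\mathfrak{q})$. This expression makes sense at $p$ exactly when the denominator $(\tau\mathfrak{p},\mathfrak{q})$ is nonzero at $p$, which by the previous paragraph coincides with triviality of the kernel. In the ``only if'' direction, if $\mathfrak{f}(p)$ exists as a point of $\mathfrak{Q}^{3}$, then it must satisfy $(\mathfrak{f}(p),\mathfrak{q})=-1$, and since $\mathfrak{f}(p)$ is proportional to $\tau(p)\mathfrak{p}$, this forces $(\tau\mathfrak{p},\mathfrak{q})(p)\neq 0$; in the ``if'' direction, the defining formula yields a well-defined nonzero element of $f(p)\cap\langle\mathfrak{p}\rangle^{\perp}$ lying on the affine hyperplane $(\,\cdot\,,\mathfrak{q})=-1$, which is precisely $\mathfrak{Q}^{3}$.

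Finally, to keep the argument clean I would note that the construction is independent of the choice of $\tau$: any two nowhere-zero sections differ by a smooth nonvanishing factor $g$, and the formula $-\tau\mathfrak{p}/(\tau\mathfrak{p},\mathfrak{q})$ is invariant under $\tau\mapsto g\tau$; this justifies passing between ``for some nonzero $\tau$'' and ``for every nonzero $\tau$'' in the pointwise statement.

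There is no real obstacle here; the only conceptual point is the rank-one observation on $\wedge^{2}f$ that collapses the kernel condition to the nonvanishing of $(\tau\mathfrak{p},\mathfrak{q})$, after which the lemma is a direct restatement of the definition of the space form projection.
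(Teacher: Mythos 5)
Your proposal is correct and is exactly the definitional unpacking the paper intends (it states this lemma without proof, as ``one can easily see''): since $\wedge^{2}f$ has rank one, triviality of the kernel is just the nonvanishing of $(\tau\mathfrak{p},\mathfrak{q})$ at $p$ for any nowhere-zero $\tau$, which is precisely the condition under which $\mathfrak{f}=-\tau\mathfrak{p}/(\tau\mathfrak{p},\mathfrak{q})$ is defined and lies in $f(p)\cap\mathcal{L}$ with $(\mathfrak{f},\mathfrak{q})=-1$, $(\mathfrak{f},\mathfrak{p})=0$. One could add, for completeness, that the same condition automatically excludes the degenerate case $f(p)\le\langle\mathfrak{p}\rangle^{\perp}$, since there $\tau\mathfrak{p}=0$ and hence $(\tau\mathfrak{p},\mathfrak{q})=0$, so both possible failures of the projection are captured by the single kernel condition.
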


Away from umbilic points, suppose that $(u,v)$ are curvature line coordinates for $\mathfrak{f}$. Then by Rodrigues' equations we have that 
\[\mathfrak{t}_{u}+\kappa_{1}\mathfrak{f}_{u} =0 = \mathfrak{t}_{v}+\kappa_{2}\mathfrak{f}_{v},\]
where $\kappa_{1}$ and $\kappa_{2}$ are the principal curvatures of $\mathfrak{f}$. Therefore, 
\[ s_{1}:= \langle \mathfrak{t} +\kappa_{1}\mathfrak{f}\rangle \quad \text{and}\quad s_{2} :=  \langle \mathfrak{t} +\kappa_{2}\mathfrak{f}\rangle\]
are curvature spheres of $f$ with respective curvature subbundles $T_{1}:= \left\langle \frac{\partial}{\partial u}\right\rangle$ and $T_{2}:= \left\langle \frac{\partial}{\partial v}\right\rangle$.

\subsubsection{Laguerre geometry}
\label{subsec:laguerre}
In this subsection we shall recall the correspondence given in~\cite{C2008} between Lie sphere geometry and Laguerre geometry. Let $\mathfrak{q}_{\infty}\in\mathcal{L}$ and define $U:= \mathbb{P}(\mathcal{L})\backslash \langle\mathfrak{q}_{\infty}\rangle^{\perp}$. Then $(E,\psi)$ with 
\[ E:= \{y\in\mathcal{L}: (y,\mathfrak{q}_{\infty})=-1\}\quad \text{and} \quad \psi:E\to U, \quad y\mapsto [y] \]
defines an affine chart for $U$. Choosing $\mathfrak{q}_{0}\in\mathcal{L}$ such that $(\mathfrak{q}_{0},\mathfrak{q}_{\infty})=-1$, we have that $\langle \mathfrak{q}_{0},\mathfrak{q}_{\infty}\rangle^{\perp}\cong\mathbb{R}^{3,1}$. We may then define the orthogonal projection
\[ \pi:\mathbb{R}^{4,2}\to \langle \mathfrak{q}_{0},\mathfrak{q}_{\infty}\rangle^{\perp} , \quad y\mapsto y + (y,\mathfrak{q}_{\infty})\mathfrak{q}_{0} + (y,\mathfrak{q}_{0})\mathfrak{q}_{\infty}.\]
Then $\pi\circ\psi^{-1}$ defines an isomorphism between $U$ and $\langle \mathfrak{q}_{0},\mathfrak{q}_{\infty}\rangle^{\perp}$. We thus identify points in $U$ with points in $\mathbb{R}^{3,1}$. Now let $W:= \mathbb{P}(\mathcal{L}\cap \langle\mathfrak{q}_{\infty}\rangle^{\perp})\backslash \langle \mathfrak{q}_{\infty}\rangle$. Then $\pi$ identifies $W$ with the projective lightcone of $\langle \mathfrak{q}_{0},\mathfrak{q}_{\infty}\rangle^{\perp}$ and thus $\mathbb{P}(\mathcal{L}^{3})$, where $\mathcal{L}^{3}$ is the lightcone of $\mathbb{R}^{3,1}$. Therefore, we identify $W$ with null directions in $\mathbb{R}^{3,1}$. We define $\langle\mathfrak{q}_{\infty}\rangle$ to be the improper point of Laguerre geometry. 

Under this correspondence, contact elements in $\mathbb{R}^{4,2}$ are then identified with affine null lines in $\mathbb{R}^{3,1}$, i.e., for $z\in\mathbb{R}^{3,1}$ and $l\in\mathbb{P}(\mathcal{L}^{3})$
\[ [z,l] := \{ z+ v: v\in l\}.\]

By choosing a point sphere complex $\mathfrak{p}\in\langle \mathfrak{q}_{0},\mathfrak{q}_{\infty}\rangle^{\perp}$ with $|\mathfrak{p}|^{2}=-1$, we have that 
\[ \langle\mathfrak{q}_{0},\mathfrak{q}_{\infty},\mathfrak{p}\rangle^{\perp}\cong\mathbb{R}^{3}.\]
One identifies points in $\mathbb{R}^{3,1}$ with oriented spheres (including point spheres, but not oriented planes) in $\mathbb{R}^{3}$ in the following way: a sphere centred at $x\in\mathbb{R}^{3}$ with signed radius $r\in\mathbb{R}$ is identified with the point
\[ x+ r\mathfrak{p}\in\mathbb{R}^{3,1}.\]
This is classically known as isotropy projection~\cite{B1929,C2008}. We then have that null lines in $\mathbb{R}^{3,1}$ correspond to pencils of spheres in $\mathbb{R}^{3}$ in oriented contact with each other and isotropic planes in $\mathbb{R}^{3,1}$ are identified with oriented planes in $\mathbb{R}^{3}$. 

It was shown in~\cite{C2008} that the Lie sphere transformations $A\in\textrm{O}(4,2)$ that preserve the improper point $\langle\mathfrak{q}_{\infty}\rangle$ are identified under this correspondence with the affine Laguerre transformations of $\mathbb{R}^{3,1}$, that is, the identity component of the group $\mathbb{R}^{4}\rtimes \textrm{O}(3,1)$. In terms of transformations of $\mathbb{R}^{3}$, this group consists of the Lie sphere transformations that map oriented planes to oriented planes. 

Defining 
\[ \mathfrak{Q}^{3}:= \{ y\in\mathcal{L}: (y,\mathfrak{q}_{\infty})=-1, (y,\mathfrak{p})=0\},\]
we have that $\pi|_{\mathfrak{Q}^{3}}$ is an isometry between $\mathfrak{Q}^{3}$ and $\langle\mathfrak{q}_{0},\mathfrak{q}_{\infty},\mathfrak{p}\rangle^{\perp}$ and this restricts to the usual Euclidean projection in the conformal geometry defined by $\langle\mathfrak{p}\rangle^{\perp}$, see~\cite{BS2012,S2008,H2003}. 

\subsection{Lie applicable surfaces}

\begin{definition}[{\cite[Definition 3.1]{P2016}}] 
\label{def:lieapp}
We say that $f$ is a Lie applicable surface if there exists a closed $\eta\in \Omega^{1}(f\wedge f^{\perp})$ such that $[\eta\wedge \eta]=0$ and the quadratic differential $q$ defined by  
\[ q(X,Y)= tr(\sigma\mapsto \eta(X)d_{Y}\sigma : f\to f)\]
is non-zero. Furthermore, if $q$ is non-degenerate (respectively, degenerate) on a dense open subset of $\Sigma$ we say that $f$ is an $\Omega$-surface ($\Omega_{0}$-surface). 
\end{definition}

Given a closed $\eta\in \Omega^{1}(f\wedge f^{\perp})$, we have for any $\tau\in \Gamma(\wedge^{2}f)$ that $\tilde{\eta} := \eta-d\tau$ is a new closed 1-form taking values in $\Omega^{1}(f\wedge f^{\perp})$. We then say that $\tilde{\eta}$ and $\eta$ are \textit{gauge equivalent} and this yields an equivalence relation on closed 1-forms with values in $f\wedge f^{\perp}$. We call the equivalence class 
\[ [\eta] :=\{\eta -d\tau: \tau\in \Gamma(\wedge^{2}f)\}\]
the \textit{gauge orbit of $\eta$}. As shown in \cite[Corollary 3.3]{P2016}, $q$ is well defined on gauge orbits, i.e., if $\eta$ and $\tilde{\eta}$ are gauge equivalent then $q=\tilde{q}$, for their respective quadratic differentials.

Let us assume that $f$ is umbilic-free. Then there are two distinct curvature sphere congruences $s_{1}$ and $s_{2}$ with respective curvature subbundles $T_{1}$ and $T_{2}$. 

\begin{proposition}[{\cite[Proposition 3.4]{P2016}}]
\label{prop:mceqn}
For an umbilic-free Legendre map $f$, $\eta\in \Omega^{1}(f\wedge f^{\perp})$ is closed if and only if $\eta$ satisfies the Maurer Cartan equation, i.e., $d\eta+\frac{1}{2}[\eta\wedge \eta]=0$. In this case, $\eta(T_{i})\le f\wedge f_{i}$ and $[\eta\wedge \eta]=0$. 
\end{proposition}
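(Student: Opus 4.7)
I would prove the three parts (closed $\Leftrightarrow$ Maurer-Cartan; $\eta(T_i) \le f \wedge f_i$; $[\eta \wedge \eta] = 0$) together via a local-frame computation, grounded in one key algebraic observation: for any $\eta \in \Omega^1(f \wedge f^\perp)$, the bracket $[\eta \wedge \eta]$ automatically takes values in $\wedge^2 f$, not merely in $f \wedge f^\perp$. Indeed, any $\xi \in f \wedge f^\perp$, viewed as a skew endomorphism of $\mathbb{R}^{4,2}$, satisfies $\xi(f) = 0$ and $\xi(f^\perp) \subseteq f$: for $\xi = a \wedge b$ with $a \in f, b \in f^\perp$, one has $(a, c) = 0$ whenever $c \in f^\perp$. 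Consequently, for $\xi, \xi' \in f \wedge f^\perp$,
\[
[\xi, \xi'](f^\perp) = \xi(\xi'(f^\perp)) - \xi'(\xi(f^\perp)) \subseteq \xi(f) + \xi'(f) = 0,
\]
so $[\xi, \xi']$ kills $f^\perp$, hence lies in $\wedge^2 f$. In particular, $[\eta \wedge \eta]$ contributes nothing to $d\eta + \tfrac{1}{2}[\eta \wedge \eta]$ outside $\wedge^2 f$.

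Next I would work in local curvature-line coordinates $(u, v)$ with $T_1 = \langle \partial_u \rangle, T_2 = \langle \partial_v \rangle$, choose sections $\sigma_i \in \Gamma s_i$, and pick a spacelike complement $\tau_1, \tau_2$ of $f$ in $f^\perp$ adapted so that $\sigma_{1,v} \in \Gamma f_1$ and $\sigma_{2,u} \in \Gamma f_2$ (here $f_i := s_i + d_{T_j} s_i$ for $j \neq i$ coincides with the rank-$3$ bundle $f + \langle \tau_i \rangle$). The curvature-sphere property gives $\sigma_{1,u}, \sigma_{2,v} \in \Gamma f$. Writing $\eta = A\, du + B\, dv$ with $A, B \in \Gamma(f \wedge f^\perp)$, I expand both in the basis $\sigma_1 \wedge \sigma_2, \sigma_i \wedge \tau_j$ of $f \wedge f^\perp$. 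A direct computation, using the inner-product pairings, confirms that $[A, B]$ is a multiple of $\sigma_1 \wedge \sigma_2$, with coefficient pairing $A$-indices along $\tau_j$ against $B$-indices along the same $\tau_j$.

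The critical step is to expand $d\eta = (B_u - A_v)\, du \wedge dv$ via the Leibniz rule and decompose along a basis of $\wedge^2 \underline{\mathbb{R}}^{4,2}$ extending our frame. Since $[\eta \wedge \eta]$ lies entirely in $\wedge^2 f \subseteq f \wedge f^\perp$, both the closedness and the Maurer-Cartan equations impose the same vanishing on the components of $d\eta$ outside $\wedge^2 f$. In particular, the components in $\tau_1 \wedge \tau_2$ and in $\sigma_i \wedge w_k$ (for $w_k$ spanning a complement of $f^\perp$), combined with the curvature-sphere identities controlling $\sigma_{i,u}, \sigma_{i,v}$, force the $\tau_2$-coefficients of $A$ and the $\tau_1$-coefficients of $B$ to vanish. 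This is precisely the claim $\eta(T_1) \in \Gamma(f \wedge f_1)$ and $\eta(T_2) \in \Gamma(f \wedge f_2)$. With these vanishings in place, every term in the explicit coefficient of $\sigma_1 \wedge \sigma_2$ in $[A, B]$ contains a zero factor, so $[\eta \wedge \eta] = 0$. Combined with closedness (or MC), the other condition then follows at once.

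The principal technical obstacle is the bookkeeping of the component-matching in $B_u - A_v$: one must carefully expand the Leibniz-rule terms, project each summand onto the extended basis of $\wedge^2 \underline{\mathbb{R}}^{4,2}$, and extract exactly which linear combinations force the structural refinement. The adapted choice of frame $\tau_1, \tau_2$ aligned with the curvature directions of $f^\perp/f$, together with the curvature-sphere identities for $\sigma_i$, is what renders this bookkeeping tractable.
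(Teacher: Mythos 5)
Your overall architecture is reasonable, and the opening observation is correct: for $\xi,\xi'\in f\wedge f^{\perp}$ one has $\xi(f)=0$, $\xi(f^{\perp})\subseteq f$, hence $[\xi,\xi']\in\wedge^{2}f$, so the closedness and Maurer--Cartan equations differ only inside $\wedge^{2}f$ and the equivalence reduces to the structural claim on $\eta(T_{i})$ plus $[\eta\wedge\eta]=0$ (the present paper gives no proof, citing \cite[Proposition 3.4]{P2016}, whose argument has this shape). The genuine gap is in your ``critical step'': you never carry out the coefficient computation, and the outcome you announce has the two transverse directions interchanged. What closedness (or MC) actually forces is that the part of $\eta(\partial_{u})$ transverse to $\wedge^{2}f$ lies in $f\wedge\langle d_{\partial_{u}}\sigma_{2}\rangle$; that is, $f_{i}$ must be the derived bundle $f+d_{T_{i}}\Gamma f=f\oplus\langle d_{T_{i}}\sigma_{j}\rangle$, $j\neq i$, not your $s_{i}+d_{T_{j}}s_{i}$ (which is in any case only rank $2$, so it cannot equal $f+\langle\tau_{i}\rangle$). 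With your convention --- $\sigma_{1,v}\in\Gamma f_{1}$ and ``the $\tau_{2}$-coefficients of $A=\eta(\partial_{u})$ vanish'' --- the claim is false: the closed forms $\eta^{mid}(\partial_{u})=\pm(\sigma_{1}\wedge\sigma_{1,u}+\epsilon^{2}\sigma_{2}\wedge\sigma_{2,u})$, or even $\eta=d(\sigma_{1}\wedge\sigma_{2})$ with $\eta(\partial_{u})=\sigma_{1,u}\wedge\sigma_{2}+\sigma_{1}\wedge\sigma_{2,u}$, have a nonzero $\sigma_{2,u}$-component and no $\sigma_{1,v}$-component. Moreover the computation you defer as ``bookkeeping'' is the entire content of the proposition, and it is short if done invariantly: since every element of $f\wedge f^{\perp}$ and of $\wedge^{2}f$ annihilates $f$, and $d_{\partial_{u}}\sigma_{1},d_{\partial_{v}}\sigma_{2}\in\Gamma f$, one finds $d\eta(\partial_{u},\partial_{v})\sigma_{1}=\eta(\partial_{u})\,d_{\partial_{v}}\sigma_{1}$ and $d\eta(\partial_{u},\partial_{v})\sigma_{2}=-\eta(\partial_{v})\,d_{\partial_{u}}\sigma_{2}$; under either equation the left-hand sides vanish, and unwinding $\eta(\partial_{u})\sigma_{1,v}=0=\eta(\partial_{v})\sigma_{2,u}$ gives exactly (the correctly labelled) $\eta(T_{i})\le f\wedge f_{i}$.

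A second, smaller gap: your bracket computation tacitly assumes that the two transverse directions can be taken as an orthogonal frame (``pairing $A$-indices along $\tau_{j}$ against $B$-indices along the same $\tau_{j}$''). The needed fact is $(d_{X}\sigma_{2},d_{Y}\sigma_{1})=0$ for $X\in\Gamma T_{1}$, $Y\in\Gamma T_{2}$; it is what converts ``transverse part of $\eta(\partial_{u})$ orthogonal to $\sigma_{1,v}$'' into ``transverse part along $\sigma_{2,u}$'' (using that $f^{\perp}/f$ is rank $2$ and positive definite), and it is also the factor that kills the single surviving term of $[\eta(\partial_{u}),\eta(\partial_{v})]$ once the structure of $\eta$ is known. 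It is true but requires an argument: differentiate $(\sigma_{2},d_{Y}\sigma_{1})=0$ along $X$ and $(\sigma_{2},d_{X}\sigma_{1})=0$ along $Y$, subtract, and use $d_{X}\sigma_{1},d_{Y}\sigma_{2}\in\Gamma f$ together with $f^{(1)}=f^{\perp}$ to handle the $d_{[X,Y]}\sigma_{1}$ term. With the labelling corrected and this orthogonality supplied, your scheme does close up, and is then essentially the cited argument.
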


By Proposition~\ref{prop:mceqn}, $\eta$ being closed implies that $[\eta\wedge \eta]=0$. Therefore, we may drop the condition that $[\eta\wedge \eta]=0$ from Definition~\ref{def:lieapp} when we are working with umbilic-free Legendre maps. 

One has a splitting of the trivial bundle called the \textit{Lie cyclide splitting}:
\[ \underline{\mathbb{R}}^{4,2}= S_{1}\oplus_{\perp} S_{2},\]
where
\[ S_{1} = \langle \sigma_{1}, d_{Y}\sigma_{1}, d_{Y}d_{Y}\sigma_{1}\rangle \quad \text{and}\quad S_{2} = \langle \sigma_{2}, d_{X}\sigma_{2}, d_{X}d_{X}\sigma_{2}\rangle,\]
for $\sigma_{1}\in \Gamma s_{1}$, $\sigma_{2}\in \Gamma s_{2}$, $X\in \Gamma T_{1}$ and $Y\in \Gamma T_{2}$. Since we may identify $\mathfrak{o}(4,2)$ with $\wedge^{2}\mathbb{R}^{4,2}$, we have a splitting\footnote{This is the Cartan decomposition for the symmetric space of Dupin cyclides.} 
\[ \underline{\mathfrak{o}(4,2)} = \mathfrak{h}\oplus \mathfrak{m},\]
where 
\[ \mathfrak{h} = S_{1}\wedge S_{1}\oplus S_{2}\wedge S_{2} \quad \text{and}\quad \mathfrak{m} = S_{1}\wedge S_{2}.\]
Therefore,we may split a closed 1-form $\eta$ into $\eta = \eta_{\mathfrak{h}} + \eta_{\mathfrak{m}}$, accordingly. In~\cite[Definition 3.8]{P2016} it is shown that there is a unique member of the gauge orbit of $\eta$ that satisfies $\eta_{\mathfrak{m}}\in \Omega^{1}(\wedge^{2} f)$. We call this unique member the \textit{middle potential} and denote it by $\eta^{mid}$.

\textbf{Assumption:} for the rest of this paper we will make the assumption that the signature of the quadratic differential $q$ is constant over all of $\Sigma$. 

From Proposition~\ref{prop:mceqn}, one can deduce that $q\in \Gamma ((T_{1}^{*})^{2}\oplus (T_{2}^{*})^{2})$. Therefore, after possibly rescaling $q$ by $\pm 1$ and switching $T_{1}$ and $T_{2}$, we may write 
\[ q = (d\sigma_{1},d\sigma_{1}) - \epsilon^{2}(d\sigma_{2},d\sigma_{2}),\]
for unique (up to sign) lifts of the curvature sphere congruences $\sigma_{1}\in \Gamma s_{1}$ and $\sigma_{2}\in \Gamma s_{2}$. The middle potential is then given by 
\begin{equation}
\label{eqn:middle}
\eta^{mid} = \sigma_{1}\wedge\star d\sigma_{1} + \epsilon^{2} \sigma_{2}\wedge \star d\sigma_{2},
\end{equation}
where $\star$ is the Hodge-star operator of the conformal structure $c$ for which the curvature directions on $T\Sigma$ are null. One finds that $q$ is divergence-free with respect to $c$, i.e., in terms of curvature line coordinates $u$ and $v$, there exist functions $U$ of $u$ and $V$ of $v$ such that
\begin{equation} 
\label{eqn:qdivfree}
q = -\epsilon^{2}U^{2}du^{2} + V^{2}dv^{2}.
\end{equation}
When one projects to a space form, where the space form projection immerses, one finds that Demoulin's equation 
\[ \left(\frac{V\sqrt{E}}{U\sqrt{G}} \frac{\kappa_{1,u}}{\kappa_{1}-\kappa_{2}}\right)_{v} +\epsilon^{2}\left(\frac{U\sqrt{G}}{V\sqrt{E}} \frac{\kappa_{2,v}}{\kappa_{1}-\kappa_{2}} \right)_{u}=0\]
is satisfied. 

By gauging $\eta^{mid}$ by $\pm \epsilon \sigma_{1}\wedge \sigma_{2}$, we obtain closed 1-forms 
\[ \eta^{\pm} := (\sigma_{1}\pm \epsilon \sigma_{2})\wedge \star d(\sigma_{1}\pm \epsilon \sigma_{2}).\]
Thus, $s^{\pm}:= \langle \sigma_{1}\pm \epsilon \sigma_{2}\rangle$ are isothermic sphere congruences (see~\cite{BDPP2011, H2003}). There then exist (unique up to constant reciprocal rescaling) lifts $\sigma^{\pm}\in \Gamma s^{\pm}$ such that 
\begin{equation*}
\eta^{\pm} = \sigma^{\pm}\wedge d\sigma^{\mp}.
\end{equation*}
We call these lifts the \textit{Christoffel dual lifts} of $s^{\pm}$. In terms of these lifts the middle potential has the form:
\begin{equation}
\label{eqn:midchris}
\eta^{mid} = \frac{1}{2}\left(\sigma^{+}\wedge d\sigma^{-} + \sigma^{-}\wedge d\sigma^{+} \right).
\end{equation}

\subsection{Transformations of Lie applicable surfaces}
\label{subsec:trafos}
The transformation theory for Lie applicable surfaces was developed in~\cite{C2012i} and was further explored in~\cite{P2016}. In this section we shall review some of this theory. The richness of the transformation theory of Lie applicable surfaces follows from the following result:

\begin{theorem}[{\cite[Lemma 4.2.6]{C2012i}}]
Suppose that $\eta\in \Omega^{1}(f\wedge f^{\perp})$ is closed and $[\eta\wedge \eta]=0$. Then $\{d+t\eta\}_{t\in \mathbb{R}}$ is a 1-parameter family of flat metric connections. 
\end{theorem}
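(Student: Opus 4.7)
The plan is to verify directly that $\nabla^{t} := d + t\eta$ preserves the metric for every $t$, and then that its curvature vanishes identically; both follow almost immediately from the hypotheses together with the identification $\wedge^{2}\mathbb{R}^{4,2}\cong\mathfrak{o}(4,2)$ recalled in the preliminaries.

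First I would address the metric property. Because $\eta\in\Omega^{1}(f\wedge f^{\perp})$ and $f\wedge f^{\perp}\subset \wedge^{2}\underline{\mathbb{R}}^{4,2}$, each value $\eta(X)$ acts on $\underline{\mathbb{R}}^{4,2}$ as a skew-symmetric endomorphism. Hence for any $\sigma,\tau\in\Gamma\underline{\mathbb{R}}^{4,2}$ and $X\in\Gamma T\Sigma$,
\[
(\eta(X)\sigma,\tau)+(\sigma,\eta(X)\tau)=0.
\]
Combined with the fact that $d$ (the trivial connection) is itself metric, this yields
\[
X(\sigma,\tau)=(\nabla^{t}_{X}\sigma,\tau)+(\sigma,\nabla^{t}_{X}\tau)
\]
for every $t\in\mathbb{R}$, so $\nabla^{t}$ is a metric connection.

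Next I would compute the curvature. For a connection of the form $d+A$ with $A$ a $\mathfrak{o}(4,2)$-valued $1$-form, the standard identity (using the $[\,\cdot\wedge\cdot\,]$ product recalled above) gives
\[
R^{d+A}=dA+\tfrac{1}{2}[A\wedge A].
\]
Applying this with $A=t\eta$ and using bilinearity of $[\,\cdot\wedge\cdot\,]$,
\[
R^{\nabla^{t}}=t\,d\eta+\tfrac{t^{2}}{2}[\eta\wedge\eta].
\]
The hypotheses $d\eta=0$ and $[\eta\wedge\eta]=0$ kill the two terms separately, so $R^{\nabla^{t}}=0$ for every $t\in\mathbb{R}$.

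There is no genuine obstacle in the argument; the entire content of the theorem is packaged into the two conditions imposed on $\eta$ and the identification of $f\wedge f^{\perp}$ with a subbundle of $\underline{\mathfrak{o}(4,2)}$. The only care needed is to use the correct sign and factor conventions so that the curvature formula matches the definition of $[\,\cdot\wedge\cdot\,]$ given earlier in the preliminaries; once that is pinned down, flatness is simultaneous in the parameter $t$ because the two offending terms are of different degrees in $t$ and each vanishes on its own.
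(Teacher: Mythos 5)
Your argument is correct and is precisely the standard verification that the paper defers to the cited reference [C2012i, Lemma 4.2.6]: metricity follows from skew-symmetry of $f\wedge f^{\perp}\subset\underline{\mathfrak{o}(4,2)}$, and flatness from $R^{d+t\eta}=t\,d\eta+\tfrac{t^{2}}{2}[\eta\wedge\eta]=0$, with the two terms vanishing separately by hypothesis. Nothing further is needed.
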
 

Suppose now that $\tilde{\eta}:=\eta - d\tau\in[\eta]$, for some $\tau \in \Gamma(\wedge^{2}f)$.

\begin{lemma}[{\cite[Lemma 4.5.1]{C2012i}}] 
\label{lem:flatgorb} $d+t\tilde{\eta} = \exp(t\tau)\cdot (d+t\eta).$
\end{lemma}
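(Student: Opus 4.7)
The plan is to verify the identity by direct computation, interpreting $\exp(t\tau)\cdot D$ as the conjugation $\exp(t\tau)\circ D\circ\exp(-t\tau)$ of connections. The entire calculation is driven by two vanishing identities forced by the isotropy of $f$, which I would establish first.

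The initial observation is that $\tau$, viewed as a skew endomorphism of $\mathbb{R}^{4,2}$, is nilpotent of order two. Writing $\tau=\sigma_1\wedge\sigma_2$ locally with $\sigma_1,\sigma_2\in\Gamma f$, the formula $(a\wedge b)(c)=(a,c)b-(b,c)a$ together with $f\subset f^\perp$ shows $\tau$ annihilates $f^\perp$; since the image of $\tau$ already lies in $f\subset f^\perp$, one has $\tau^2=0$, so $\exp(\pm t\tau)=I\pm t\tau$. The same reasoning applies to $d\tau$: by Remark~\ref{rem:legder}, $d\tau\in\Omega^1(f\wedge f^{(1)}) = \Omega^1(f\wedge f^\perp)$, and any endomorphism of the form $a\wedge b$ with $a\in f$, $b\in f^\perp$ both has image in $f+f^\perp=f^\perp$ and annihilates $f$. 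Applied to $\eta$ and $d\tau$ this gives, as compositions of endomorphisms, $\tau\circ\eta=0$, $\eta\circ\tau=0$ (so $[\tau,\eta]=0$), and $\tau\circ d\tau=0$.

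With these in hand, I would split the conjugation into its two pieces. The $t\eta$ term is unchanged:
\[ \exp(t\tau)(t\eta)\exp(-t\tau) = (I+t\tau)(t\eta)(I-t\tau) = t\eta + t^2[\tau,\eta] - t^3\,\tau\eta\tau = t\eta. \]
For the $d$-term, applying to a section $s$ yields
\[ \exp(t\tau)\,d\bigl(\exp(-t\tau)\,s\bigr) = ds + \bigl(\exp(t\tau)\,d\exp(-t\tau)\bigr)\,s; \]
since $d\exp(-t\tau) = -t\,d\tau$ and $\tau\circ d\tau = 0$, the resulting connection $1$-form collapses to $-t\,d\tau$. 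Summing gives
\[ \exp(t\tau)\cdot(d+t\eta) = d - t\,d\tau + t\eta = d + t\tilde\eta, \]
which is the claim.

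The only real obstacle is bookkeeping: in principle, conjugation by $I+t\tau$ could produce $t^2$ and $t^3$ cross-terms. The two identities $[\tau,\eta]=0$ and $\tau\circ d\tau=0$ kill all of these, leaving only the linear-in-$t$ contribution, so the proof reduces to the one-line computation above.
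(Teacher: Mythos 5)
Your proof is correct: the nilpotency $\tau^{2}=0$, the vanishing compositions $\tau\circ\eta=\eta\circ\tau=\tau\circ d\tau=0$ (all consequences of $\eta,d\tau\in\Omega^{1}(f\wedge f^{\perp})$, $\tau\in\Gamma(\wedge^{2}f)$ and the isotropy of $f$), and the gauge computation $\exp(t\tau)\cdot(d+t\eta)=d-t\,d\tau+t\eta$ are all verified accurately. The paper itself gives no proof, citing~\cite[Lemma 4.5.1]{C2012i}, and your argument is exactly the standard direct computation that establishes the cited result.
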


In the case that we are using the middle potential, $\eta^{mid}$, we shall refer to the 1-parameter family of connections $\{d+t\eta^{mid}\}_{t\in\mathbb{R}}$ as the \textit{middle pencil of connections}, or for brevity, the \textit{middle pencil}. 

\subsubsection{Calapso transforms}
\label{subsubsec:caltrafos}
For each $t\in \mathbb{R}$ and gauge potential $\eta$, since $d+t\eta$ is a flat metric connection, there exists a local orthogonal trivialising gauge transformation $T(t):\Sigma\to O(4,2)$, that is, 
\[ T(t)\cdot (d+t\eta) = d.\]

\begin{definition}
$f^{t}:= T(t)f$ is called a Calapso transform of $f$. 
\end{definition}

By Lemma~\ref{lem:flatgorb}, if $\tilde{\eta} = \eta - d\tau$ is in the gauge orbit of $\eta$, then the corresponding local orthogonal trivialising gauge transformations of $d+t\tilde{\eta}$ are given by 
\[ \widetilde{T}(t) = T(t)\exp(-t\tau).\]
Since $(\wedge^{2}f)f=0$, we have that the Calapso transforms are well defined on the gauge orbit. 

In~\cite[Theorem 4.4]{P2016} it is shown that $\eta^{t}:= Ad_{T(t)}\cdot\eta$ is a closed 1-form taking values in $f^{t}\wedge (f^{t})^{\perp}$. Furthermore, $[\eta^{t}\wedge \eta^{t}]=0$ and $q^{t}=q$. Thus we have the following theorem:

\begin{theorem}
\label{thm:calform}
Calapso transforms are Lie applicable surfaces. 
\end{theorem}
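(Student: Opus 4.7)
The plan is to verify each clause of Definition~\ref{def:lieapp} for $f^t = T(t) f$. Three of the four requirements are delivered directly by the cited result on $\eta^t := Ad_{T(t)}\cdot\eta$: $\eta^t$ is closed, takes values in $f^t\wedge (f^t)^\perp$, satisfies $[\eta^t\wedge\eta^t]=0$, and its associated quadratic differential equals $q^t = q$, which is non-zero by hypothesis. So the entire task reduces to showing that $f^t$ is itself a Legendre map.

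Because $T(t)$ takes values in $\textrm{O}(4,2)$, it sends isotropic $2$-planes to isotropic $2$-planes, so $f^t:\Sigma\to\mathcal{Z}$. The key algebraic observation I would use is that elements of $f\wedge f^\perp$ annihilate $f$: for $a\in f$, $b\in f^\perp$ and $c\in f$,
\[
(a\wedge b)(c) = (a,c)b - (b,c)a = 0,
\]
since $(a,c)=0$ by the isotropy of $f$ and $(b,c)=0$ because $c\in f\le f^\perp$. Hence $\eta(X)\sigma=0$ for all $X\in\Gamma T\Sigma$ and $\sigma\in\Gamma f$.

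Differentiating the trivialisation relation $T(t)\cdot(d+t\eta) = d$ gives $dT(t) = t\, T(t)\eta$, so for $\tilde\sigma := T(t)\sigma$ with $\sigma\in\Gamma f$,
\[
d\tilde\sigma = dT(t)\cdot\sigma + T(t)d\sigma = t\,T(t)\eta\,\sigma + T(t)d\sigma = T(t)d\sigma.
\]
It follows that $(f^t)^{(1)} = T(t) f^{(1)}$, and since $T(t)\in\textrm{O}(4,2)$ preserves the bilinear form, $(f^t)^\perp = T(t) f^\perp$. The contact condition $(f^t)^{(1)}\le (f^t)^\perp$ is then immediate from $f^{(1)}\le f^\perp$, and the solder form satisfies $\beta^t(X)(T(t)\sigma) = T(t)\beta(X)\sigma \bmod f^t$, so $\ker\beta^t=\{0\}$ inherits from $\ker\beta=\{0\}$.

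I anticipate no substantive obstacle: the main analytic content, namely transporting $\eta$ to $\eta^t$ while preserving all of its structural properties, is already handled by the cited Theorem~4.4 of~\cite{P2016}. The only piece of genuine work here is the Legendre verification for $f^t$, which falls out once one notices that $\eta$ annihilates sections of $f$ and hence that $T(t)$ intertwines $d$ on $\Gamma f$ with $d$ on $\Gamma f^t$.
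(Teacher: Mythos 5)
Your proposal is correct and follows essentially the same route as the paper, which deduces the theorem directly from the cited result that $\eta^{t}=Ad_{T(t)}\cdot\eta$ is closed, takes values in $f^{t}\wedge (f^{t})^{\perp}$, satisfies $[\eta^{t}\wedge\eta^{t}]=0$ and has $q^{t}=q$. Your additional verification that $f^{t}$ is a Legendre map (via $\eta\,\Gamma f=0$, hence $d(T(t)\sigma)=T(t)d\sigma$ for $\sigma\in\Gamma f$) is sound and merely makes explicit a point the paper leaves implicit.
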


In fact, this 1-parameter family of Lie applicable surfaces arises because Lie applicable surfaces are the deformable surfaces of Lie sphere geometry (see~\cite{MN2006}). 

\begin{proposition}[{\cite[Proposition 4.5]{P2016}}]
\label{prop:calgauge}
For any $s\in\mathbb{R}$, 
\[ d+s\eta^{t} = T(t)\cdot (d+(s+t)\eta).\]
Therefore 
\[ T^{t}(s) = T(s+t)T^{-1}(t)\]
are the local trivialising orthogonal gauge transformations of $d+s\eta^{t}$. 
\end{proposition}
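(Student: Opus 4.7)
The plan is to exploit the trivialisation condition $T(t)\cdot(d+t\eta)=d$ to extract a Maurer--Cartan-type identity for $T(t)$, and then read off both claimed equations by a direct algebraic manipulation of gauge actions.

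The first step is to unpack $T(t)\cdot(d+t\eta)=d$ using the standard gauge formula $g\cdot(d+A)=d+gAg^{-1}-(dg)g^{-1}$. Applied to the trivialisation, this immediately yields the identity
\[
(dT(t))\,T(t)^{-1} \;=\; t\,\mathrm{Ad}_{T(t)}\eta \;=\; t\,\eta^{t},
\]
which is the only nontrivial ingredient in what follows. Note that orthogonality of $T(t)$ is built into the definition, so $\eta^{t}$ is automatically $\mathfrak{o}(4,2)$-valued.

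For the first displayed equation, I would apply the gauge formula with the same $T(t)$ but at the shifted parameter $(s+t)$. The $\mathrm{Ad}$-term produces $(s+t)\eta^{t}$, while the Maurer--Cartan term $-(dT(t))T(t)^{-1}$ contributes $-t\eta^{t}$ by the identity above; these combine to give
\[
T(t)\cdot\bigl(d+(s+t)\eta\bigr) \;=\; d + (s+t)\eta^{t} - t\eta^{t} \;=\; d + s\eta^{t},
\]
as required.

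For the second displayed equation, I would simply compose: using the first equation backwards and then the defining property of $T(s+t)$,
\[
\bigl(T(s+t)T^{-1}(t)\bigr)\cdot(d+s\eta^{t}) \;=\; T(s+t)\cdot\bigl(d+(s+t)\eta\bigr) \;=\; d,
\]
so $T^{t}(s):=T(s+t)T^{-1}(t)$ is indeed an orthogonal trivialising gauge for $d+s\eta^{t}$ (orthogonality being automatic from $T(s+t),T(t)\in O(4,2)$). The statement is ultimately a formal consequence of the defining properties of $T(t)$ and $\eta^{t}$ together with associativity of the gauge action, so I do not anticipate any real obstacle beyond being careful with the sign of the Maurer--Cartan term in the gauge-action formula.
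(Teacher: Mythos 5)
Your proof is correct and is essentially the standard gauge-theoretic computation behind the cited result: extract $(dT(t))T(t)^{-1}=t\,\mathrm{Ad}_{T(t)}\eta$ from $T(t)\cdot(d+t\eta)=d$, apply the gauge formula at parameter $s+t$, and compose with $T(s+t)$ using associativity of the action. (The first identity can be seen even more directly by writing $T(t)\cdot\bigl(d+(s+t)\eta\bigr)=T(t)\cdot(d+t\eta)+s\,\mathrm{Ad}_{T(t)}\eta=d+s\eta^{t}$, since gauging a connection plus a tensorial 1-form adds the adjoint of that 1-form, but this is the same calculation.)
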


\subsubsection{Darboux transforms}
\label{subsubsec:darbtrafo}
Fix $m\in\mathbb{R}^{\times}$ and let $\eta$ be any gauge potential. Since $d^{m}:=d+m\eta$ is a flat connection, it has many parallel sections. Suppose that $\hat{s}$ is a null rank 1 parallel subbundle of $d^{m}$ such that $\hat{s}$ is nowhere orthogonal to the curvature sphere congruences of $f$. Let $s_{0}: =  \hat{s}^{\perp}\cap f$ and let $\hat{f}:= s_{0}\oplus \hat{s}$. 

\begin{definition}
$\hat{f}$ is a Darboux transform of $f$ with parameter $m$. 
\end{definition}

If $\tilde{\eta}=\eta - d\tau$, then by Lemma~\ref{lem:flatgorb}, we have that $\hat{s}':=\exp(m\tau)\hat{s}$ is a parallel subbundle of $d+m\tilde{\eta}$. However, $\hat{s}$ and $\hat{s}'$ determine the same $\hat{f}$. Thus, Darboux transforms are invariant of choice of gauge potential in the gauge orbit of $\eta$. 

It was shown in~\cite[Theorem 4.3.7, Proposition 4.3.8]{C2012i} that $\hat{f}$ is a Lie applicable surface, and $f$ is a Darboux transform of $\hat{f}$ with parameter $m$, i.e., for any gauge potential $\hat{\eta}\in \Omega^{1}(\hat{f}\wedge \hat{f}^{\perp})$, there exists a parallel subbundle $s\le f$ of $d+m\hat{\eta}$. Thus:

\begin{theorem}
Darboux transforms of Lie applicable surfaces are Lie applicable surfaces.
\end{theorem}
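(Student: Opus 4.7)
My aim is to construct explicitly a closed $\hat\eta\in\Omega^{1}(\hat f\wedge\hat f^{\perp})$ with $[\hat\eta\wedge\hat\eta]=0$ and with non-zero quadratic differential; Lie-applicability of $\hat f$ then falls out of Definition~\ref{def:lieapp}.

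First I would verify that $\hat f$ is a Legendre map. Fix $\hat\sigma\in\Gamma\hat s$ parallel for $d+m\eta$, so $d\hat\sigma=-m\eta(\hat\sigma)$, and $\sigma_{0}\in\Gamma s_{0}$. Since $d+m\eta$ is a metric connection, $(\hat\sigma,\hat\sigma)$ is constant (hence zero, $\hat s$ being null); together with $(\hat\sigma,\sigma_{0})=0$ and the isotropy of $f$, $\hat f$ is a rank-$2$ isotropic subbundle, the rank and immersion properties being guaranteed by the hypothesis that $\hat s$ is nowhere orthogonal to $s_{1},s_{2}$. Writing $\eta$ as a sum of decomposables $a\wedge b$ with $a\in\Gamma f$, $b\in\Gamma f^{\perp}$, and using that $f\subset f^{\perp}$ is isotropic, one finds $\eta(X)\hat\sigma\in\Gamma f^{\perp}$ and $(\eta(X)\hat\sigma,\sigma_{0})=0$, so $d\hat\sigma\perp\hat f$; metricity of $d+m\eta$ then gives $d\sigma_{0}\perp\hat\sigma$, and hence $\hat f^{(1)}\le\hat f^{\perp}$.

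Next I would construct $\hat\eta$. The key point is that $d+m\eta$ is flat: a local trivialising gauge $T(m)$ from Section~\ref{subsubsec:caltrafos} turns $\hat\sigma$ into a constant null vector $\hat\sigma_{\infty}$ and sends $f$ to the Calapso transform $f^{m}$. The dressing correction needed to produce a gauge potential for the Darboux transform is cleanest in this picture: one modifies $\eta^{m}=Ad_{T(m)}\eta$ by a term built from the fixed line $\langle\hat\sigma_{\infty}\rangle$ and then conjugates back by $T(m)^{-1}$. Decomposing the result on the splitting $f\oplus\langle\hat\sigma\rangle$ and using $d\hat\sigma=-m\eta(\hat\sigma)$, one checks that the resulting $\hat\eta$ takes values in $\hat f\wedge\hat f^{\perp}$; moreover the pencil $\{d+t\hat\eta\}_{t\in\mathbb{R}}$ is, up to a $T(m)$-conjugation and a shift $t\mapsto t+m$ in the spectral parameter, the original flat pencil $\{d+t\eta\}$, so remains flat. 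Matching powers of $t$ in the Maurer--Cartan equation yields both $d\hat\eta=0$ and $[\hat\eta\wedge\hat\eta]=0$. Finally, evaluating $\hat q(X,Y)=tr(\sigma\mapsto\hat\eta(X)d_{Y}\sigma:\hat f\to\hat f)$ on the frame $(\sigma_{0},\hat\sigma)$ and substituting $d\hat\sigma=-m\eta(\hat\sigma)$ identifies $\hat q$ with $q$ up to a non-vanishing factor; transversality of $\hat s$ to the curvature spheres of $f$ prevents any collapse, so $\hat q\neq 0$ since $q\neq 0$ by hypothesis.

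The main obstacle is in the middle step: arranging the dressing correction so that $\hat\eta$ genuinely lies in $\hat f\wedge\hat f^{\perp}$ and not merely in $\underline{\mathfrak{o}(4,2)}$ — this is where one needs the precise form of the correction, which is the technical content of~\cite{C2012i}. The remaining verifications of closedness, vanishing of $[\hat\eta\wedge\hat\eta]$ and non-degeneracy of $\hat q$ are then routine unpackings of the Maurer--Cartan calculus already set up in the paper.
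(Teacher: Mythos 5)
Your opening verification of the contact condition for $\hat f$ is essentially sound: since $\eta$ takes values in $f\wedge f^{\perp}$, one has $d\hat\sigma=-m\eta\hat\sigma\in\Omega^{1}(f^{\perp})$ with $(\eta\hat\sigma,\sigma_{0})=0$, and metricity handles $d\sigma_{0}$. The problem is the heart of the matter --- producing a closed $\hat\eta\in\Omega^{1}(\hat f\wedge\hat f^{\perp})$ with non-vanishing quadratic differential --- which you both leave to \cite{C2012i} and, where you do sketch a mechanism, misdescribe. You assert that the pencil $\{d+t\hat\eta\}_{t\in\mathbb{R}}$ is, up to conjugation by $T(m)$ and the shift $t\mapsto t+m$, the original pencil $\{d+t\eta\}_{t\in\mathbb{R}}$. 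But that relation, $d+s\eta^{t}=T(t)\cdot(d+(s+t)\eta)$, is Proposition~\ref{prop:calgauge} and characterises the \emph{Calapso} transform $f^{m}=T(m)f$; if it held for $\hat\eta$, the surface your $\hat\eta$ is adapted to would be (a Lie sphere transform of) a Calapso transform rather than the Darboux transform $\hat f$. The relation that actually holds between the two pencils is Proposition~\ref{prop:darbmid}: $d+t\hat\eta^{mid}=\Gamma^{\hat s}_{s}(1-t/m)\cdot(d+t\eta^{mid})$, a $t$-dependent rational dressing, singular at $t=m$, whose existence rests on the parallelity of $\hat s$ for $d+m\eta$; it is not a shift of the spectral parameter. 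Your proposal to ``modify $\eta^{m}$ by a term built from $\langle\hat\sigma_{\infty}\rangle$ and conjugate back'' gestures at Lemma~\ref{lem:darbcons} but never specifies the correction, and you yourself concede that the crucial containment $\hat\eta\in\Omega^{1}(\hat f\wedge\hat f^{\perp})$ is the technical content of the reference.

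For comparison: the paper does not reprove this statement at all --- its proof is the citation of \cite[Theorem 4.3.7, Proposition 4.3.8]{C2012i}, where a gauge potential adapted to $\hat f$ is constructed and $\hat f$ is shown to be a Legendre map and Lie applicable (indeed with $f$ a Darboux transform of $\hat f$ back again). Two further points in your sketch are asserted rather than proved: the immersion (Legendre) condition for $\hat f$, which does not follow merely from $\hat s$ being nowhere orthogonal to the curvature spheres (that hypothesis gives rank $2$ and transversality, not $\ker\beta=\{0\}$ for $\hat f$), and the non-vanishing of $\hat q$, which you infer from a frame computation you do not perform. So as it stands the proposal is a plausible plan whose decisive middle step is missing, and whose stated mechanism would, if taken literally, produce the wrong transform.
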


Recall from~\cite{BS2012,S2008,BDPP2011,C2012i} that for $L,\hat{L}\in \mathbb{P}(\mathcal{L})$ such that $L\not\perp \hat{L}$ and $t\in\mathbb{R}^{\times}$ we have an orthogonal transformation  
\[ \Gamma^{\hat{L}}_{L}(t)u = \left\{ \begin{array}{ll}
t\, u &\text{for $u\in \hat{L}$,}\\
\frac{1}{t}\, u &\text{for $u\in L$,}\\
u &\text{for $u\in (L\oplus \hat{L})^{\perp}$.}
\end{array}\right.\] 
In the case that $f$ and $\hat{f}$ are umbilic-free we have the following result regarding the middle pencils of the two surfaces:

\begin{proposition}[{\cite[Proposition 4.17, Theorem 4.19]{P2016}}]
\label{prop:darbmid}
Suppose that $f$ and $\hat{f}$ are umbilic-free Darboux transforms of each other with parameter $m$. 
Then
\begin{equation*} 
d+t\hat{\eta}^{mid} = \Gamma^{\hat{s}}_{s}(1-t/m)\cdot (d+t\eta^{mid}),
\end{equation*}
where $\hat{s}\le \hat{f}$ and $s\le f$ are the parallel subbundles of $d+m\eta^{mid}$ and $d+m\hat{\eta}^{mid}$, respectively, implementing these Darboux transforms. 
\end{proposition}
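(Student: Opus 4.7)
The objective is to verify the identity of connections $d+t\hat{\eta}^{mid} = g_t \cdot (d+t\eta^{mid})$, where $g_t := \Gamma^{\hat s}_s(1-t/m)$. By definition of the gauge action, the right-hand side is $d + t\, g_t\,\eta^{mid}\, g_t^{-1} - (dg_t)\, g_t^{-1}$, so the task reduces to identifying this expression with $d + t\hat\eta^{mid}$. To make $g_t$ computable, I would choose null lifts $\hat\sigma \in \Gamma\hat s$ and $\sigma \in \Gamma s$ normalised so $(\hat\sigma,\sigma) = -1$, which is possible because $\hat s$ is nowhere perpendicular to the curvature spheres of $f$ and thus nowhere perpendicular to $s \le f$. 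Set $\tau := \hat\sigma \wedge \sigma$; since $\tau$ acts as $+1$ on $\hat s$, as $-1$ on $s$, and as $0$ on $(\hat s \oplus s)^\perp$, the subalgebra generated by $\tau$ is commutative and $g_t = \exp\bigl(\log(1-t/m)\,\tau\bigr)$.

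The next step is to input the parallelism of $\hat s$ and $s$: there exist 1-forms $\alpha,\beta$ such that $d\hat\sigma = -m\,\eta^{mid}(\hat\sigma) + \alpha\hat\sigma$ and $d\sigma = -m\,\hat\eta^{mid}(\sigma) + \beta\sigma$. Substituting into $d\tau = d\hat\sigma \wedge \sigma + \hat\sigma \wedge d\sigma$ expresses $(dg_t)\, g_t^{-1}$ as a combination of $\eta^{mid}(\hat\sigma)\wedge\sigma$, $\hat\sigma\wedge\hat\eta^{mid}(\sigma)$, and multiples of $\tau$ itself, with $t$-dependent coefficients governed by $\log(1-t/m)$. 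Meanwhile, the adjoint term $g_t\, \eta^{mid}\, g_t^{-1}$ expands via the eigenspace decomposition of $\tau$: components of $\eta^{mid}$ mapping $(\hat s\oplus s)^\perp$ into $\hat s$ or $s$ get rescaled by $(1-t/m)^{\pm 1}$, components in $\hat s \wedge s$ are fixed, and components in $(\hat s\oplus s)^\perp\wedge(\hat s\oplus s)^\perp$ are also fixed. Careful bookkeeping, grouping terms according to the $\hat f\wedge\hat f^\perp$ decomposition of $\underline{\mathbb{R}}^{4,2}$, should collapse the apparent $(1-t/m)^{\pm 1}$ coefficients to a single overall factor of $t$, producing exactly $t\hat\eta^{mid}$.

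The hard part has two layers. First one must verify that the resulting 1-form lies in $\Omega^{1}(\hat f\wedge\hat f^{\perp})$; this follows from tracking how $g_t$ acts on $f = s_0 \oplus s$, using that $g_t$ fixes $s_0 = \hat s^\perp\cap f = \hat s^\perp\cap\hat f$ pointwise and rescales $s$ within $f\oplus\hat s = s_0 \oplus s \oplus \hat s$, so that $g_t f = \hat f$ up to the relevant factor. Second, and more subtly, the 1-form produced must be identified with the \emph{middle} potential of $\hat f$ and not merely some other representative of its gauge orbit, since by \cite{P2016} only one element of $[\hat\eta]$ satisfies $\eta_{\mathfrak{m}}\in\Omega^{1}(\wedge^{2}\hat f)$. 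For this the cleanest route is via equation~(\ref{eqn:midchris}): the Darboux construction transports the Christoffel dual lifts $\sigma^{\pm}$ of $f$ to canonical Christoffel dual lifts $\hat\sigma^{\pm}$ of $\hat f$, and once both sides are written in that form the middle-potential character of the right-hand side becomes transparent and $g_t$ intertwines the two expressions by direct verification, completing the proof.
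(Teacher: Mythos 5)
First, note that the paper itself gives no proof of this proposition: it is imported verbatim from \cite[Proposition 4.17, Theorem 4.19]{P2016}, so your proposal has to stand on its own, and as written it has genuine gaps. The most concrete one is the assertion, used to place the gauged potential in $\Omega^{1}(\hat{f}\wedge\hat{f}^{\perp})$, that ``$g_{t}f=\hat{f}$ up to the relevant factor''. This is false: $s_{0}$ and $s$ are eigenbundles of $g_{t}=\Gamma^{\hat{s}}_{s}(1-t/m)$ (with eigenvalues $1$ and $(1-t/m)^{-1}$), so $g_{t}f=g_{t}(s_{0}\oplus s)=f$, not $\hat{f}$; the passage from $f$ to $\hat{f}$ happens at the level of connections, not by applying $g_{t}$ to $f$, so this step needs a genuinely different argument.

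Second, the promised ``careful bookkeeping'' does not collapse by itself. Decompose $\underline{\mathbb{R}}^{4,2}=\hat{s}\oplus s\oplus W$ with $W=(\hat{s}\oplus s)^{\perp}$ and normalise $(\hat{\sigma},\sigma)=-1$, so that $\pi_{\hat{s}}(v)=-(v,\sigma)\hat{\sigma}$, $\pi_{s}(v)=-(v,\hat{\sigma})\sigma$. Gauging by $g_{t}$ rescales the blocks of $d+t\eta^{mid}$ by powers of $u=1-t/m$. The blocks scaled by $u^{-1}$ and $u^{-2}$ lose their poles at $t=m$ for easy reasons (nullity of $\sigma,\hat{\sigma}$, skewness, $W\perp\hat{s}$, and the parallelism of $\hat{s}$), but the blocks scaled by $u$ — e.g.\ $\pi_{\hat{s}}\circ(d+t\eta^{mid})|_{W}=\bigl((w,d\sigma)+t(w,\eta^{mid}\sigma)\bigr)\hat{\sigma}$ — pick up a $t^{2}$-term $-\tfrac{t^{2}}{m}(w,\eta^{mid}\sigma)\hat{\sigma}$, which vanishes precisely when $\eta^{mid}\sigma\in\Omega^{1}(s)$; likewise equality with $d+t\hat{\eta}^{mid}$ forces $\hat{\eta}^{mid}\hat{\sigma}\in\Omega^{1}(\hat{s})$ and the equality of the $g_{t}$-invariant blocks of the two middle potentials. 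These adapted-form statements for the \emph{middle} potentials relative to the pair $(s,\hat{s})$ are exactly the substance of the cited results, and they do not follow from substituting the two parallelism equations into $d\tau$ as you propose: parallelism gives $(d+m\eta^{mid})\hat{\sigma}\in\Omega^{1}(\hat{s})$ and $(d+m\hat{\eta}^{mid})\sigma\in\Omega^{1}(s)$, which is different information. Finally, even granting the computation, your identification of the outcome as the middle potential of $\hat{f}$ (rather than another member of its gauge orbit) is deferred to the unproved claim that the Darboux construction carries the Christoffel dual lifts of $f$ to those of $\hat{f}$ — a statement essentially equivalent to the proposition itself, so nothing has been reduced. (A minor point: $(\hat{\sigma},\sigma)=-1$ is achievable because $s\neq s_{0}$ — otherwise $s$ would be orthogonal to the curvature spheres of $\hat{f}$ — not because $\hat{s}$ is non-orthogonal to the curvature spheres of $f$, which does not control $(\hat{\sigma},\sigma)$ for an arbitrary line $s\le f$.)
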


From~\cite{P2016} we also have the following proposition:

\begin{proposition}[{\cite[Proposition 4.14]{P2016}}]
\label{prop:darbfree}
Suppose that $\hat{f}$ is a Darboux transform of $f$ with parameter $m$ and let $l$ be any rank 2 subbundle of $f+\hat{f}$ with $l\cap s_{0}=\{0\}$. Then there exist gauge potentials $\eta\in\Omega^{1}(f\wedge f^{\perp})$ and $\hat{\eta}\in\Omega^{1}(\hat{f}\wedge \hat{f}^{\perp})$ such that $s:=f\cap l$ is a parallel subbundle of $d+m\hat{\eta}$ and $\hat{s}:=\hat{f}\cap l$ is a parallel subbundle of $d+m\eta$. 
\end{proposition}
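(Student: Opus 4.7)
The plan is to construct $\eta$ and $\hat{\eta}$ by starting from initial gauge potentials---whose existence is guaranteed by the very definition of the Darboux transform---and then translating the parallel subbundle across to the desired location by a gauge action within the orbit of the chosen potential. The key tool is Lemma~\ref{lem:flatgorb}: if $\tilde{\eta}=\eta-d\tau$, then the parallel subbundles of $d+m\tilde{\eta}$ are obtained from those of $d+m\eta$ by applying $\exp(m\tau)$.

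A quick dimension count gives the geometric picture. Since $f\cap\hat{f}=s_0$ has rank $1$, the sum $f+\hat{f}$ has rank $3$, and a rank-$2$ subbundle $l\subset f+\hat{f}$ with $l\cap s_0=\{0\}$ meets $f$ and $\hat{f}$ in rank-$1$ subbundles transverse to $s_0$, with $l=s\oplus\hat{s}$. To produce $\eta$, I would choose some $\eta_0\in\Omega^1(f\wedge f^\perp)$ admitting a null parallel subbundle $\hat{s}_0\subset\hat{f}$ of $d+m\eta_0$, and then gauge by a suitable $\tau\in\Gamma(\wedge^2 f)$. The crux is that, because $f$ is isotropic, any $\tau\in\wedge^2 f$ has image in $f$ and vanishes on $f$, so $\tau^2=0$ and $\exp(m\tau)=1+m\tau$. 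Writing $\tau=\phi\,\sigma_0\wedge\sigma_*$ locally with $\sigma_0\in\Gamma s_0$ and $\sigma_*\in\Gamma f$ a lift transverse to $s_0$, the identification $a\wedge b\,(c)=(a,c)b-(b,c)a$ together with the isotropy of $f$ yields
\[
\tau(\hat{\sigma}_0)=-\phi\,(\sigma_*,\hat{\sigma}_0)\,\sigma_0\in\Gamma s_0.
\]
Since $\hat{s}_0^\perp\cap f=s_0$ by the very definition of $s_0$, the pairing $(\sigma_*,\hat{\sigma}_0)$ is nowhere zero; hence, as $\phi$ ranges over all smooth functions, $\exp(m\tau)\hat{s}_0$ sweeps out every rank-$1$ subbundle of $\hat{f}$ transverse to $s_0$. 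The hypothesis $l\cap s_0=\{0\}$ places $\hat{s}$ in this family, so we solve for the unique $\phi$ giving $\exp(m\tau)\hat{s}_0=\hat{s}$ and set $\eta:=\eta_0-d\tau$; nullity is preserved since $\exp(m\tau)\in\textrm{O}(4,2)$.

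The construction of $\hat{\eta}$ is entirely symmetric: since $f$ is itself a Darboux transform of $\hat{f}$ with parameter $m$, one picks $\hat{\eta}_0\in\Omega^1(\hat{f}\wedge\hat{f}^\perp)$ admitting a null parallel subbundle $s_0'\subset f$ of $d+m\hat{\eta}_0$, and gauges by $\hat{\tau}\in\Gamma(\wedge^2\hat{f})$ to move $s_0'$ to $s$. The only nontrivial steps in the whole argument are the short verifications $\tau^2=0$ and $\tau(\hat{\sigma}_0)\in\Gamma s_0$; these reduce the existence problem to a single-function sweep, and the transversality hypothesis $l\cap s_0=\{0\}$ is exactly what places both $s$ and $\hat{s}$ in the image of that sweep.
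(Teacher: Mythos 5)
Your argument is correct and is the natural one given the machinery of this paper: the result is only cited here (from [P2016, Proposition 4.14]), and the intended proof is exactly this gauge-orbit argument via Lemma~\ref{lem:flatgorb}, using that $(\wedge^{2}f)f=0$ so that $\exp(m\tau)=1+m\tau$, and that $\tau\hat{\sigma}_{0}\in\Gamma s_{0}$ with the coefficient $(\sigma_{*},\hat{\sigma}_{0})$ nowhere zero, so the single function $\phi$ sweeps out all rank~$1$ subbundles of $\hat{f}$ complementary to $s_{0}$, in particular $\hat{s}=\hat{f}\cap l$. One point in the ``entirely symmetric'' half deserves a sentence rather than a wave: there the computation $\hat{\tau}\sigma'\in\Gamma s_{0}$ rests on the isotropy of $f$ (namely $(\sigma_{0},\sigma')=0$), not on the defining relation $s_{0}=\hat{s}_{0}^{\perp}\cap f$, and the sweep covers all lines in $f$ complementary to $s_{0}$ only if the pairing $(\hat{\sigma}_{0},\sigma')$ is nowhere zero, i.e.\ only if the initial parallel subbundle $s_{0}'\le f$ of $d+m\hat{\eta}_{0}$ is nowhere equal to $s_{0}$; otherwise the orbit of $s_{0}'$ under your gauges is stuck and never reaches $s$. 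This transversality is automatic, but you should say why: $s_{0}'$ is the subbundle implementing $f$ as a Darboux transform of $\hat{f}$, hence is nowhere orthogonal to the curvature spheres of $\hat{f}$, whereas $s_{0}\le\hat{f}$ is orthogonal to all of $\hat{f}$ by isotropy; so $s_{0}'(p)\neq s_{0}(p)$ for every $p$. With that line added (and the routine observation that the locally determined $\tau$, being unique, patches to a global section of $\wedge^{2}f$), your proof is complete, and since the two required conditions involve $\eta$ and $\hat{\eta}$ separately, constructing them independently is indeed sufficient.
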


In particular, this proposition shows that given any subbundle $\hat{s}\le\hat{f}$, one may choose a gauge potential $\eta$ such that $\hat{s}$ is a parallel subbundle of $d+m\eta$.

A pertinent question is ``how many Darboux transforms does a Lie applicable surface admit?" By using that 
\[ T(m)\cdot (d+m\eta) = d,\]
for every $m\in \mathbb{R}$, one deduces the following lemma:

\begin{lemma}
\label{lem:darbcons}
$\hat{s}$ is a null rank 1 parallel subbundle of $d+m\eta$ if and only if $\hat{s}= T^{-1}(m)\hat{L}$ for some constant $\hat{L}\in \mathbb{P}(\mathcal{L})$. 
\end{lemma}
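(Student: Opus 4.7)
The plan is to exploit the defining property $T(m)\cdot(d+m\eta)=d$ and the fact that gauge transformations intertwine parallel sections of gauge-equivalent connections. Concretely, if $g$ is any (local) orthogonal frame change, then for a section $\sigma$ of the trivial bundle one has the identity $(g\cdot\nabla)(g\sigma)=g(\nabla\sigma)$; in particular $\sigma$ is parallel for $\nabla$ if and only if $g\sigma$ is parallel for $g\cdot\nabla$. Applying this with $\nabla=d+m\eta$ and $g=T(m)$, a section $\sigma$ is parallel for $d+m\eta$ if and only if $T(m)\sigma$ is parallel for $d$, i.e.\ if and only if $T(m)\sigma$ is a constant element of $\mathbb{R}^{4,2}$.

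From this I would pass to rank $1$ subbundles. A rank $1$ subbundle $\hat s$ is parallel for $d+m\eta$ if and only if it admits a nowhere-zero local section $\sigma\in\Gamma\hat s$ with $(d+m\eta)\sigma\in\Gamma\hat s$; using the formula above this translates into $T(m)\hat s$ being locally spanned by constant vectors, i.e.\ $T(m)\hat s=\hat L$ for some constant line $\hat L\in\mathbb{P}(\mathbb{R}^{4,2})$. Solving gives $\hat s=T^{-1}(m)\hat L$. Conversely, for any constant $\hat L$ the subbundle $T^{-1}(m)\hat L$ is clearly parallel for $d+m\eta$ by reversing this argument.

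It remains to account for the nullness condition. Since $T(m)$ takes values in $\mathrm{O}(4,2)$, it preserves the inner product pointwise, so $\hat s$ is null if and only if $T(m)\hat s=\hat L$ is null, i.e.\ $\hat L\in\mathbb{P}(\mathcal{L})$. This gives the desired equivalence. The only mild subtlety, which I would flag but not belabor, is that $T(m)$ is only defined locally; the statement is therefore to be read locally, and different choices of $T(m)$ differ by a constant element of $\mathrm{O}(4,2)$, which simply permutes the constant lines $\hat L$ and so does not affect the statement.
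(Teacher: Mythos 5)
Your proposal is correct and is essentially the paper's own argument: the paper deduces the lemma directly from $T(m)\cdot(d+m\eta)=d$, so that $T(m)$ carries parallel subbundles of $d+m\eta$ to constant lines, with nullity preserved since $T(m)$ takes values in $\mathrm{O}(4,2)$. The only point worth noting is that passing from a connection-preserved rank $1$ subbundle to one locally spanned by a parallel section uses the flatness of $d+m\eta$, which your argument implicitly (and legitimately) relies on.
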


Now, since $\mathbb{P}(\mathcal{L})$ is 4-dimensional, $d+m\eta$ admits a 4-parameter family of null rank 1 parallel subbundles. Since this holds for every $m\in\mathbb{R}$, we obtain the following answer to our question:

\begin{theorem}\cite{E1915,E1962}
\label{thm:darbnumb}
A Lie applicable surface admits a 5-parameter family of Darboux transforms. 
\end{theorem}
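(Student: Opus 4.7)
The plan is to parametrise the Darboux transforms of $f$ by pairs $(m,\hat{L}) \in \mathbb{R}^{\times} \times \mathbb{P}(\mathcal{L})$ and perform a straightforward dimension count using Lemma~\ref{lem:darbcons}. Fix any gauge potential $\eta$ in the gauge orbit (Darboux transforms are independent of this choice, as discussed in Section~\ref{subsubsec:darbtrafo}). For each fixed $m\in\mathbb{R}^{\times}$, Lemma~\ref{lem:darbcons} identifies the null rank-$1$ parallel subbundles of the flat connection $d+m\eta$ bijectively with constant null lines $\hat{L}\in\mathbb{P}(\mathcal{L})$ via $\hat{s}=T^{-1}(m)\hat{L}$; each such $\hat{s}$, on the complement of the closed locus where the non-degeneracy condition with respect to the curvature spheres fails, determines a Darboux transform $\hat{f}=(\hat{s}^{\perp}\cap f)\oplus \hat{s}$.

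Next I would compute $\dim \mathbb{P}(\mathcal{L}) = 4$: the lightcone $\mathcal{L}\subset \mathbb{R}^{4,2}$ is the zero locus of a single non-degenerate quadratic form and is therefore a smooth cone of dimension $5$, whose projectivisation has dimension $4$. This yields a $4$-parameter family of Darboux transforms for each fixed $m$. Allowing $m$ itself to range over $\mathbb{R}^{\times}$ contributes the fifth parameter, giving the claimed total of $4+1=5$.

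The main subtlety --- and the step I expect to require the most care --- is sharpness: showing that the map $(m,\hat{L})\mapsto \hat{f}$ is generically finite, so that the count is neither inflated nor deflated. Within a fixed $m$, the bijection of Lemma~\ref{lem:darbcons} already guarantees that distinct $\hat{L}$ produce distinct parallel subbundles $\hat{s}$, and hence distinct $\hat{f}$ away from the degenerate locus. To separate different values of $m$, one invokes Proposition~\ref{prop:darbmid}: the parameter $m$ is encoded as the pole in $t$ of the intertwining gauge transformation $\Gamma^{\hat{s}}_{s}(1-t/m)$ between the middle pencils of $f$ and $\hat{f}$, and is therefore recovered intrinsically from the pair $(f,\hat{f})$. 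Combining these two observations pins down the dimension of the family of Darboux transforms at exactly $5$.
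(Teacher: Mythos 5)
Your argument is exactly the paper's: Lemma~\ref{lem:darbcons} gives, for each fixed $m$, a $4$-parameter family of null parallel lines $\hat{s}=T^{-1}(m)\hat{L}$ indexed by $\hat{L}\in\mathbb{P}(\mathcal{L})$ (which is $4$-dimensional), and letting $m$ vary over $\mathbb{R}^{\times}$ adds the fifth parameter. Your additional remarks on sharpness (recovering $m$ via Proposition~\ref{prop:darbmid}) go slightly beyond what the paper records, but the route is the same.
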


\subsection{Associate surfaces}\label{subsec:asssurf}
Let $\mathfrak{q}_{\infty}$ and $\mathfrak{p}$ be a space form vector and point sphere complex with $|\mathfrak{q}_{\infty}|^{2}=0$ and $|\mathfrak{p}|^{2}=-1$, i.e., 
\[ \mathfrak{Q}^{3}:=\{y\in\mathcal{L}:(y,\mathfrak{q}_{\infty})=-1, (y,\mathfrak{p})=0\}\]
has sectional curvature $\kappa=0$ and $\mathfrak{Q}^{3}\cong \mathbb{R}^{3}$. Then we may choose a null vector $\mathfrak{q}_{0}\in \langle \mathfrak{p}\rangle^{\perp}$ such that $(\mathfrak{q}_{0},\mathfrak{q}_{\infty})=-1$. Thus $\langle \mathfrak{q}_{\infty},\mathfrak{p},\mathfrak{q}_{0}\rangle^{\perp}\cong \mathbb{R}^{3}$ and we have an isometry 
\[\phi:\langle \mathfrak{q}_{\infty},\mathfrak{p},\mathfrak{q}_{0}\rangle^{\perp}\to \mathfrak{Q}^{3},\quad x\mapsto x + \mathfrak{q}_{0} + \frac{1}{2}(x,x)\mathfrak{q}_{\infty}.\]
We can use this to identify $\mathfrak{f}:= f\cap\mathfrak{Q}^{3}$ with a surface $x:\Sigma\to\mathbb{R}^{3}$. Let $n:\Sigma\to S^{2}$ denote the unit normal of $x$. We then have that $d\mathfrak{f} = dx + (dx,x)\mathfrak{q}_{\infty}$ and the tangent plane congruence of $\mathfrak{f}$ is given by $\mathfrak{t} = n + (n,x)\mathfrak{q}_{\infty} + \mathfrak{p}$. 

It was shown in~\cite[Section 5]{P2016} that there exists a 1-parameter family of closed 1-forms $\eta$ in the gauge orbit of $\eta^{mid}$ satisfying $(\eta\mathfrak{p},\mathfrak{q}_{\infty})=0$. We may then write 
\begin{equation}
\label{eqn:asseta}
\eta = \mathfrak{f}\wedge (dx^{D} + (dx^{D},x)\mathfrak{q}_{\infty}) + \mathfrak{t}\wedge (d\hat{x} + (d\hat{x}, x)\mathfrak{q}_{\infty}),
\end{equation}
where $x^{D}$ and $\hat{x}$ are Combescure transforms of $x$, i.e., $x^{D}$ and $\hat{x}$ have parallel curvature directions to $x$, such that the principal curvatures of the surfaces satisfy
\begin{equation}
\label{eqn:asscurv}
\frac{1}{\kappa_{1}\kappa^{D}_{2}} +  \frac{1}{\kappa_{2}\kappa^{D}_{1}} -  \frac{1}{\hat{\kappa}_{1}} - \frac{1}{\hat{\kappa}_{2}}=0.
\end{equation}
This shows that $\{x,x^{D},\hat{x},n\}$ forms a system of $O$-surfaces, see~\cite{KS2003}. Conversely, given such a system of surfaces satisfying~(\ref{eqn:asscurv}), one can check that $\eta$ defined in~(\ref{eqn:asseta}) is a closed 1-form, and thus $f$ is an $\Omega$-surface. 

We call $x^{D}$ an \textit{associate surface} of $x$ and $\hat{x}$ an \textit{associate Gauss map} of $x$. In~\cite[Theorem 5.4]{P2016} it was shown that an associate surface of an $\Omega$-surface is itself an $\Omega$-surface.

\section{Polynomial conserved quantities}
Suppose that $f:\Sigma\to \mathcal{Z}$ is a Lie applicable surface with family of flat connections $\{d^{t} = d+t\eta\}_{t\in\mathbb{R}}$. We now give a definition that is analogous to that of \cite{BS2012,S2008}:

\begin{definition}
A non-zero polynomial $p = p(t)\in\Gamma \underline{\mathbb{R}}^{4,2}[t]$ is called a polynomial conserved quantity of $\{d^{t}\}_{t\in\mathbb{R}}$ if $p(t)$ is a parallel section of $d^{t}$ for all $t\in\mathbb{R}$.
\end{definition}

The following lemma shows that the existence of polynomial conserved quantities is gauge invariant. Suppose that $\tilde{\eta}$ is in the gauge orbit of $\eta$ so that $\tilde{\eta} = \eta - d\tau$ for $\tau\in \Gamma (\wedge^{2} f)$. From Lemma~\ref{lem:flatgorb} we immediately get the following result:

\begin{lemma}
\label{lem:polygauge}
Suppose that $p$ is a polynomial conserved quantity of $d+t\eta$. Then $\tilde{p}(t) = \exp(t\tau)p(t)$ is a polynomial conserved quantity of $d+t\tilde{\eta}$ with $\tilde{p}(0) = p(0)$. 
\end{lemma}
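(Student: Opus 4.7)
The statement is essentially a direct application of Lemma~\ref{lem:flatgorb}, so the plan is short but with one subtle point about polynomiality.

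First I would recall that, by Lemma~\ref{lem:flatgorb}, the gauge transformation identity reads
\[
d+t\tilde{\eta} \;=\; \exp(t\tau)\circ(d+t\eta)\circ\exp(-t\tau),
\]
since the action of a gauge transformation $g$ on a connection $\nabla$ is $g\cdot \nabla = g\nabla g^{-1}$. Setting $\tilde{p}(t):=\exp(t\tau)p(t)$, a one-line calculation then gives
\[
(d+t\tilde{\eta})\tilde{p}(t) \;=\; \exp(t\tau)\,(d+t\eta)\,\exp(-t\tau)\exp(t\tau)\,p(t) \;=\; \exp(t\tau)\,(d+t\eta)\,p(t) \;=\; 0,
\]
so $\tilde{p}(t)$ is parallel for $d+t\tilde\eta$ for every $t$. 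The normalisation $\tilde{p}(0)=p(0)$ is immediate from $\exp(0)=\mathrm{id}$, and $\tilde{p}$ is non-zero as a $\mathbb{R}^{4,2}$-valued polynomial because $\exp(t\tau)$ is invertible and $p$ is non-zero.

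The only point that actually requires checking is that $\tilde{p}$ is a \emph{polynomial} in $t$, and this is where I would spend most of the proof. The key observation is that any $\tau\in\Gamma(\wedge^2 f)$ satisfies $\tau^2=0$ when viewed as an endomorphism of $\underline{\mathbb{R}}^{4,2}$ via the identification $\mathfrak{o}(4,2)\cong\wedge^2\mathbb{R}^{4,2}$. Indeed, for $\sigma_1,\sigma_2\in\Gamma f$ and any $c\in\underline{\mathbb{R}}^{4,2}$,
\[
(\sigma_1\wedge\sigma_2)(c) \;=\; (\sigma_1,c)\sigma_2 - (\sigma_2,c)\sigma_1 \;\in\; f,
\]
so $\mathrm{Im}(\tau)\subseteq f$. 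Since $f$ is isotropic, $f\subseteq f^{\perp}$, and so $(\sigma_i,\tau(c))=0$ for $i=1,2$, giving $\tau^2(c)=0$. Therefore $\exp(t\tau)=\mathrm{id}+t\tau$, and consequently
\[
\tilde{p}(t) \;=\; p(t) + t\,\tau\,p(t),
\]
which is manifestly polynomial in $t$.

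The argument is entirely formal once one has Lemma~\ref{lem:flatgorb} and the nilpotency $\tau^2=0$; the latter is the only step that uses the geometric hypothesis that $\tau$ takes values in $\wedge^2 f$ with $f$ isotropic, and it is what makes the expression $\exp(t\tau)p(t)$ behave as a polynomial rather than as a transcendental object in $t$.
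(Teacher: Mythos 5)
Your proof is correct and follows the same route as the paper, which deduces the lemma directly from Lemma~\ref{lem:flatgorb}. Your extra check that $\tau^{2}=0$ (so $\exp(t\tau)=\mathrm{id}+t\tau$ and $\tilde{p}$ is genuinely polynomial) is a valid and worthwhile elaboration of a point the paper leaves implicit here, though it uses the same fact later in Proposition~\ref{prop:polymid}.
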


Using an identical argument to \cite[Proposition 2.2]{BS2012}, one obtains the following lemma:

\begin{lemma}
Suppose that $p$ is a polynomial conserved quantity of $d^{t}$. Then the real polynomial $(p(t),p(t))$ has constant coefficients.
\end{lemma}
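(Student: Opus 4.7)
The plan is to exploit the fact, stated earlier in the excerpt (Theorem 2.15 of the cited source), that each $d^{t}=d+t\eta$ is a \emph{metric} connection with respect to the bilinear form $(\,,\,)$. Thus for any parallel section $\sigma$ of $d^{t}$ the scalar function $(\sigma,\sigma)$ is constant on $\Sigma$; the task is then to promote this pointwise-in-$t$ statement to a statement about the coefficients of the $t$-polynomial.

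First, fix $t_{0}\in\mathbb{R}$. Since $d^{t_{0}}$ is metric and $d^{t_{0}}p(t_{0})=0$, we have
\[
d(p(t_{0}),p(t_{0})) = 2(d^{t_{0}}p(t_{0}),p(t_{0})) = 0,
\]
so the function $(p(t_{0}),p(t_{0}))\colon\Sigma\to\mathbb{R}$ is locally constant. Assuming $\Sigma$ connected (which is the standing assumption for Legendre maps here), it is constant.

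Next, I would write $p(t)=\sum_{i=0}^{n}p_{i}\,t^{i}$ with $p_{i}\in\Gamma\underline{\mathbb{R}}^{4,2}$, so that
\[
(p(t),p(t)) \;=\; \sum_{k=0}^{2n}\Bigl(\sum_{i+j=k}(p_{i},p_{j})\Bigr)t^{k}
\]
is a polynomial in $t$ whose coefficients are functions on $\Sigma$. For any tangent vector $X\in T\Sigma$, applying $d_{X}$ produces another polynomial in $t$, namely $\sum_{k}\bigl(\sum_{i+j=k}d_{X}(p_{i},p_{j})\bigr)t^{k}$. By the previous step, this polynomial vanishes at every $t=t_{0}\in\mathbb{R}$, hence is identically zero as a polynomial; therefore each coefficient function $\sum_{i+j=k}(p_{i},p_{j})$ has zero derivative in every direction, and so is constant on $\Sigma$.

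There is no genuine obstacle here: the content is entirely that $d^{t}$ is metric, plus the standard device of separating a polynomial identity in $t$ into its coefficients by using that a polynomial which vanishes for every real $t$ is the zero polynomial. The only minor point worth flagging is the need for connectedness of $\Sigma$ to pass from ``locally constant'' to ``constant'', which is tacitly in force throughout the paper.
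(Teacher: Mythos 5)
Your argument is correct and is essentially the paper's own: the paper simply invokes the argument of \cite[Proposition 2.2]{BS2012}, which is exactly this — each $d^{t}$ is a flat metric connection, so $(p(t_{0}),p(t_{0}))$ is constant for each fixed $t_{0}$, and varying $t_{0}$ over $\mathbb{R}$ forces every coefficient of the polynomial $(p(t),p(t))$ to be constant. The connectedness remark is a fair (and tacitly assumed) footnote, but nothing more needs to be said.
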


From now on we shall assume that $f$ is an umbilic-free $\Omega$-surface and assume that $\eta$ is the middle potential $\eta^{mid}$. 

\begin{proposition}
\label{prop:polymid}
Suppose that $p(t) =\sum\limits_{k=0}^{d}p_{k}t^{k}$ is a degree $d$ polynomial conserved quantity of $d+t\eta^{mid}$. Then 
\begin{enumerate}
\item $p_{0}$ is constant.
\item\label{item:polytop} For the Christoffel dual lifts $\sigma^{\pm}$, one has that $p_{d} = -(\sigma^{+}\odot\sigma^{-}) p_{d-1}$. Furthermore, $(\sigma^{\pm},p_{d-1})$ are constants. 
\item\label{item:polygauge} For any $\tau\in\Gamma (\wedge^{2}f)$, $\tilde{p}(t) = \exp(t\tau) p(t)$ has degree at most $d$ and the coefficient of $t^{d}$ is given by $p_{d} + \tau p_{d-1}$.  
\end{enumerate}                     
\end{proposition}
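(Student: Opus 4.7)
The plan is to expand the parallelism condition $(d+t\eta^{mid})p(t)=0$ in powers of $t$, obtaining $dp_{0}=0$ from the $t^{0}$ coefficient (which is (1)), the recursion $dp_{k}=-\eta^{mid}p_{k-1}$ for $1\le k\le d$, and the closing condition $\eta^{mid}p_{d}=0$ from the $t^{d+1}$ coefficient; parts (2) and (3) are then extracted from these.

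For (2), I first use $\eta^{mid}p_{d}=0$ to show $p_{d}\in\Gamma f$. In curvature line coordinates, the expansion~\eqref{eqn:middle} of $\eta^{mid}$ shows that the only component of $\eta^{mid}(\partial_{u})$ lying outside $\wedge^{2}f$ is $\epsilon^{2}\sigma_{2}\wedge\sigma_{2,u}$ (since $\sigma_{1,u}\in\Gamma f$ by the curvature sphere condition); its contribution to $\eta^{mid}(\partial_{u})p_{d}$ sits in the $f^{\perp}/f$-direction $[\sigma_{2,u}]$ and vanishing forces $(\sigma_{2},p_{d})=0$. The symmetric analysis at $\partial_{v}$ gives $(\sigma_{1},p_{d})=0$, and the leftover $f$-components then yield $(\sigma_{2,u},p_{d})=(\sigma_{1,v},p_{d})=0$. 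As $\{\sigma_{1},\sigma_{2},\sigma_{1,v},\sigma_{2,u}\}$ spans $f^{\perp}$ for an umbilic-free Legendre map, we conclude $p_{d}\in\Gamma f$. Writing $p_{d}=A\sigma^{+}+B\sigma^{-}$ with $a=(\sigma^{+},p_{d-1})$, $b=(\sigma^{-},p_{d-1})$, I then match $dp_{d}=-\eta^{mid}p_{d-1}$ via~\eqref{eqn:midchris}: the $f^{\perp}/f$-component reduces to $(A+\tfrac{b}{2})(d\sigma^{+})_{f^{\perp}/f}+(B+\tfrac{a}{2})(d\sigma^{-})_{f^{\perp}/f}=0$, and the generic linear independence of these two $f^{\perp}/f$-valued 1-forms (a consequence of umbilic-freeness together with $s^{\pm}\ne s_{1},s_{2}$) forces $A=-b/2$, $B=-a/2$, so $p_{d}=-(\sigma^{+}\odot\sigma^{-})p_{d-1}$.

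For the constancy of $(\sigma^{\pm},p_{d-1})$ the key observation is that $\eta^{mid}$ is valued in $f\wedge f^{\perp}$, so its action as an endomorphism of $\mathbb{R}^{4,2}$ has image in $f^{\perp}$; since $\sigma^{\pm}\in f\perp f^{\perp}$ this gives the identity $(\sigma^{\pm},\eta^{mid}(X)v)=0$ for every $X$ and $v$. In particular $(\sigma^{+},dp_{d-1})=-(\sigma^{+},\eta^{mid}p_{d-2})=0$, so $d(\sigma^{+},p_{d-1})=(d\sigma^{+},p_{d-1})$. On the other hand, matching the $f$-components of $dp_{d}=-\eta^{mid}p_{d-1}$ (after the $f^{\perp}/f$ cancellations above) yields $da=-(d\sigma^{+},p_{d-1})$, i.e.\ $d(\sigma^{+},p_{d-1})=-(d\sigma^{+},p_{d-1})$. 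Comparing the two expressions gives $(d\sigma^{+},p_{d-1})=0$, whence $(\sigma^{+},p_{d-1})$ is constant; the symmetric argument handles $(\sigma^{-},p_{d-1})$.

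Part (3) is then immediate: any $\tau\in\Gamma(\wedge^{2}f)$ is a multiple of $\sigma^{+}\wedge\sigma^{-}$ and annihilates $f$ by isotropy, so $\tau p_{d}=0$ from $p_{d}\in\Gamma f$, and $\tau^{2}=0$ since $\tau$ maps $\mathbb{R}^{4,2}$ into $f$. Therefore $\exp(t\tau)=1+t\tau$, and expanding $\tilde{p}(t)=(1+t\tau)p(t)$ the $t^{d+1}$-coefficient $\tau p_{d}$ vanishes, leaving degree $\le d$ with $t^{d}$-coefficient $p_{d}+\tau p_{d-1}$. The main technical hurdle throughout is the genericity input: the span $\langle\sigma_{1},\sigma_{2},\sigma_{1,v},\sigma_{2,u}\rangle=f^{\perp}$ and the linear independence of $(d\sigma^{+})_{f^{\perp}/f}$ and $(d\sigma^{-})_{f^{\perp}/f}$ as $f^{\perp}/f$-valued 1-forms, both of which are essentially where the umbilic-free $\Omega$-surface hypothesis is used.
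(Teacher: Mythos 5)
Your proposal is correct and follows essentially the same route as the paper: expand $(d+t\eta^{mid})p(t)=0$ to get $dp_{0}=0$, the recursion and $\eta^{mid}p_{d}=0$; conclude $p_{d}\in\Gamma f$; identify $p_{d}=-(\sigma^{+}\odot\sigma^{-})p_{d-1}$ from the $t^{d}$-equation; obtain constancy of $(\sigma^{\pm},p_{d-1})$ by comparing $d(\sigma^{\pm},p_{d-1})=(d\sigma^{\pm},p_{d-1})$ (valid since $dp_{d-1}\in\Omega^{1}(f^{\perp})$) with the $f$-component of that equation; and get part (3) from $\tau p_{d}=0$ and $\tau^{2}=0$. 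The only difference is cosmetic: you establish $p_{d}\in\Gamma f$ from the curvature-line form~(\ref{eqn:middle}) of $\eta^{mid}$ and justify the coefficient identification by the linear independence of $d\sigma^{+}$ and $d\sigma^{-}$ modulo $f$, whereas the paper works throughout with the Christoffel-dual form~(\ref{eqn:midchris}) and phrases both steps as ``otherwise some section $\nu$ of $f$ would satisfy $d\nu\in\Omega^{1}(f)$, contradicting umbilic-freeness'' --- equivalent uses of the same hypothesis.
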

\begin{proof}
Consider the polynomial $(d+t\eta^{mid})p(t)$ whose coefficients take values in $\Omega^{1}(\underline{\mathbb{R}}^{4,2})$: 
\begin{equation}
\label{eqn:polysum}
 0 = (d+t\eta^{mid})p(t) = dp_{0} + \sum\limits_{k=1}^{d}t^{k}(dp_{k} + \eta^{mid} p_{k-1})+ t^{d+1}\eta^{mid}p_{d}. 
\end{equation}
Therefore $dp_{0}=0$ and thus $p_{0}$ is constant. Furthermore, $\eta^{mid}p_{d}=0$. Now by Equation~(\ref{eqn:midchris}), in terms of special lifts of the curvature spheres, the middle potential is given by 
\[ \eta^{mid} = \frac{1}{2}\left(\sigma^{+}\wedge d\sigma^{-} + \sigma^{-}\wedge d\sigma^{+}\right).\]
Thus, $\eta^{mid}p_{d}=0$ implies that
\begin{equation} 
\label{eqn:polytop}
0= (\sigma^{+},p_{d})d\sigma^{-} - (d\sigma^{-},p_{d})\sigma^{+} + (\sigma^{-},p_{d})d\sigma^{+} - (d\sigma^{+},p_{d})\sigma^{-}.
\end{equation}
One deduces that $(\sigma^{\pm},p_{d})=0$, as otherwise one would have that $\nu:=(\sigma^{+},p_{d})\sigma^{-}+ (\sigma^{-},p_{d})\sigma^{+}$ is a section of $f$ satisfying $d\nu\in \Omega^{1}(f)$, which contradicts that $f$ is umbilic-free. Furthermore, from (\ref{eqn:polytop}) we have that $(d\sigma^{\pm},p_{d})=0$. Therefore, since $f^{(1)} = f^{\perp}$, $p_{d}$ takes values in $(f^{\perp})^{\perp}= f$. Thus 
\[p_{d} = \lambda \sigma^{+} + \mu \sigma^{-}\]
for some smooth functions $\lambda$ and $\mu$. By~(\ref{eqn:polysum}), one has that $dp_{d}+\eta^{mid}p_{d-1}=0$. Therefore, modulo terms in $f$, one has that 
\[ \lambda d\sigma^{+} + \mu d\sigma^{-} + \frac{1}{2}((\sigma^{+},p_{d-1})d\sigma^{-} + (\sigma^{-},p_{d-1})d\sigma^{+}) = 0\, mod\, f.\]
Hence, $\lambda = -\frac{1}{2} (\sigma^{-},p_{d-1})$ and $\mu = - \frac{1}{2}(\sigma^{+},p_{d-1})$, as otherwise 
\[\nu:= (\lambda +\frac{1}{2} (\sigma^{-},p_{d-1}))\sigma^{+} + (\mu + \frac{1}{2}(\sigma^{+},p_{d-1}))\sigma^{-}\]
would define a section of $f$ satisfying $d\nu\in \Omega^{1}(f)$, contradicting that $f$ is umbilic-free. Returning to the equation $dp_{d}+\eta^{mid}p_{d-1}=0$ and evaluating the terms taking values in $f$, one has that 
\begin{equation}
\label{eqn:chrisconst}
-d(\sigma^{-},p_{d-1})\, \sigma^{+} - d(\sigma^{+},p_{d-1})\, \sigma^{-} -(d\sigma^{-},p_{d-1})\sigma^{+} - (d\sigma^{+},p_{d-1})\sigma^{-} =0.
\end{equation}
Now by~(\ref{eqn:polysum}), $dp_{d-1} + \eta p_{d-2}=0$, and thus $dp_{d-1}\in \Omega^{1}(f^{\perp})$. Hence, $d(\sigma^{\pm},p_{d-1})=(d\sigma^{\pm},p_{d-1})$ and thus~(\ref{eqn:chrisconst}) implies that $(\sigma^{\pm},p_{d-1})$ are constant. 

Since $p_{d}\in\Gamma f$, $\tau p_{d} = 0$ for any $\tau\in\Gamma(\wedge^{2}f)$. Therefore, 
\[ \exp(t\tau)p(t) = p(t) + t\tau p(t) = p_{0} + \sum\limits_{k=1}^{d}t^{k}(p_{k} + \tau p_{k-1}) + t^{d+1}\tau p_{d}\]
is a polynomial of degree at most $d$ and the coefficient of $t^{d}$ is $p_{d} + \tau p_{d-1}$. 
\end{proof}

\begin{remark}
For polynomial conserved quantities of $\Omega_{0}$-surfaces, \ref{item:polytop} and~\ref{item:polygauge} of Proposition~\ref{prop:polymid} do not necessarily hold. We shall not consider general polynomial conserved quantities of $\Omega_{0}$-surfaces, however in Subsection~\ref{subsec:tublin} we shall consider constant conserved quantities.  
\end{remark}

\begin{corollary}
\label{cor:degreedrop}
Suppose that $p$ is a polynomial conserved quantity of degree $d$ of the middle pencil of $f$. Then for $\tau\in\Gamma(\wedge^{2}f)$, $\tilde{p}(t) :=\exp(t\tau)p(t)$ has degree strictly less than $d$ if and only if $p_{d-1}\in\Gamma (s^{+})^{\perp}$ (or $p_{d-1}\in\Gamma (s^{-})^{\perp}$) and $\tau=\tau^{+}$ (respectively, $\tau=\tau^{-}$).  
\end{corollary}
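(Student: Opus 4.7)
By Proposition~\ref{prop:polymid}(\ref{item:polygauge}), $\tilde p(t)$ already has degree at most $d$ with leading coefficient $p_d+\tau p_{d-1}$. Hence the degree drops strictly if and only if
\[
\tau p_{d-1}=-p_d=(\sigma^+\odot\sigma^-)p_{d-1}=\tfrac12\bigl((\sigma^+,p_{d-1})\sigma^-+(\sigma^-,p_{d-1})\sigma^+\bigr),
\]
where the second equality uses Proposition~\ref{prop:polymid}(\ref{item:polytop}). So the task reduces to deciding when this equation for $\tau$ is solvable in $\Gamma(\wedge^{2}f)$ and exhibiting the solution.

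Since $f=s^+\oplus s^-$ has rank $2$, the line bundle $\wedge^2 f$ is generated by $\sigma^+\wedge\sigma^-$; write $\tau=\rho\,\sigma^+\wedge\sigma^-$ for a smooth function $\rho$. The definition $a\wedge b(c)=(a,c)b-(b,c)a$ gives
\[
\tau p_{d-1}=\rho\bigl((\sigma^+,p_{d-1})\sigma^--(\sigma^-,p_{d-1})\sigma^+\bigr),
\]
so, comparing the coefficients of the linearly independent $\sigma^+$ and $\sigma^-$, the condition $\tau p_{d-1}=-p_d$ is equivalent to the pair of scalar identities
\[
(\rho-\tfrac12)(\sigma^+,p_{d-1})=0,\qquad (\rho+\tfrac12)(\sigma^-,p_{d-1})=0.
\]

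The final step is to analyse the joint solvability of this pair. If both $(\sigma^+,p_{d-1})$ and $(\sigma^-,p_{d-1})$ vanished identically, then $p_{d-1}\in\Gamma f^\perp$ and the formula above would force $p_d=0$, contradicting $\deg p=d$; on the other hand, if both were nonzero on an open set, the two equations would demand $\rho=\tfrac12$ and $\rho=-\tfrac12$ there simultaneously. Thus exactly one of the global conditions $p_{d-1}\in\Gamma(s^+)^\perp$ or $p_{d-1}\in\Gamma(s^-)^\perp$ must hold, and the remaining equation then fixes $\rho=-\tfrac12$ or $\rho=\tfrac12$, respectively, so $\tau=\mp\tfrac12\,\sigma^+\wedge\sigma^-$. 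A short direct computation from~\eqref{eqn:midchris}, using $d(\sigma^+\wedge\sigma^-)=\sigma^+\wedge d\sigma^- - \sigma^-\wedge d\sigma^+$, verifies $\eta^{mid}-d(\mp\tfrac12\,\sigma^+\wedge\sigma^-)=\eta^\pm$, identifying these explicit choices of $\tau$ with the gauge transformations $\tau^\pm$ that take $\eta^{mid}$ to the Christoffel dual potentials $\eta^\pm$. There is no real obstacle in the argument; the only point requiring care is the sign bookkeeping involved in matching $\tau=\mp\tfrac12\,\sigma^+\wedge\sigma^-$ with the notation $\tau^\pm$.
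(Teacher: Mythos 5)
Your argument is correct and follows essentially the same route as the paper's proof: reduce to the vanishing of the $t^{d}$-coefficient $p_{d}+\tau p_{d-1}$ via Proposition~\ref{prop:polymid}, write $\tau$ as a multiple of $\sigma^{+}\wedge\sigma^{-}$, and split into the same two scalar equations, ruling out the degenerate case using $p_{d}=-(\sigma^{+}\odot\sigma^{-})p_{d-1}$. The only (harmless) difference is that you additionally verify the identities $\eta^{mid}-d(\mp\tfrac12\,\sigma^{+}\wedge\sigma^{-})=\eta^{\pm}$ to pin down $\tau^{\pm}$, and you could make the final ``exactly one global condition'' step fully explicit by citing the constancy of $(\sigma^{\pm},p_{d-1})$ from Proposition~\ref{prop:polymid}.
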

\begin{proof}
From~\ref{item:polygauge} of Proposition~\ref{prop:polymid}, we have that the coefficient of $t^{d}$ of $\tilde{p}$ is given by $p_{d} +\tau p_{d-1}$. We may write $\tau = \beta \sigma^{+}\wedge \sigma^{-}$, where $\beta$ is a smooth function and $\sigma^{\pm}$ are Christoffel dual lifts. Then by~\ref{item:polytop} of Proposition~\ref{prop:polymid}, we have that 
\[p_{d} +\tau p_{d-1} =  -\tfrac{1}{2}((\sigma^{-},p_{d-1})\sigma^{+} + (\sigma^{+},p_{d-1})\sigma^{-}) +\beta ( (\sigma^{+},p_{d-1})\sigma^{-} - (\sigma^{-},p_{d-1})\sigma^{+}) .\]
Therefore, the $t^{d}$ coefficient of $\tilde{p}$ vanishes if and only if 
\begin{equation}
\label{eqn:dthcoeff}
(\beta -\tfrac{1}{2})(\sigma^{+},p_{d-1}) = 0 = (\beta + \tfrac{1}{2})(\sigma^{-},p_{d-1}).
\end{equation}
Since the top term of $p$ is given by $(\sigma^{+}\odot\sigma^{-})p_{d-1}$, we cannot have that $(\sigma^{+},p_{d-1})$ and $(\sigma^{-},p_{d-1})$ both vanish as this would imply that $p$ has degree strictly less than $d$. Therefore, without loss of generality, assume that $(\sigma^{-},p_{d-1})\neq 0$. Then~(\ref{eqn:dthcoeff}) is equivalent to $\beta =- \frac{1}{2}$ and $(\sigma^{+},p_{d-1})=0$, i.e., $\tau = -\frac{1}{2}\sigma^{+}\wedge \sigma^{-} = \tau^{+}$ and $p_{d-1}\in\Gamma (s^{+})^{\perp}$.
\end{proof}

\begin{corollary}
\label{cor:polynormdeg}
Suppose that $p$ is a polynomial conserved quantity of $d+t\eta^{mid}$. Then the degree $d$ of $p$ is invariant under gauge transformation if and only if $(p(t),p(t))$ is a polynomial of degree $2d-1$. 
\end{corollary}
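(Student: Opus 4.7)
The plan is to reduce the statement to a direct algebraic computation of the top two coefficients of the real polynomial $(p(t),p(t))$, and to cross-reference the resulting nonvanishing condition with the one supplied by Corollary~\ref{cor:degreedrop}. The key inputs are the explicit description of $p_{d}$ in terms of $p_{d-1}$ and the Christoffel dual lifts $\sigma^{\pm}$ from part~\ref{item:polytop} of Proposition~\ref{prop:polymid}, together with the fact that $f=\langle\sigma^{+},\sigma^{-}\rangle$ is isotropic, so $(\sigma^{\pm},\sigma^{\pm})=(\sigma^{+},\sigma^{-})=0$.

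First I would substitute
$$p_{d} = -(\sigma^{+}\odot\sigma^{-})p_{d-1} = -\tfrac{1}{2}\bigl((\sigma^{+},p_{d-1})\sigma^{-} + (\sigma^{-},p_{d-1})\sigma^{+}\bigr)$$
into $(p_{d},p_{d})$ and use the above orthogonality relations to conclude $(p_{d},p_{d})=0$, so $(p(t),p(t))$ has degree at most $2d-1$. The coefficient of $t^{2d-1}$ is $2(p_{d-1},p_{d})$, and a short calculation with the same substitution reduces this to $-2(\sigma^{+},p_{d-1})(\sigma^{-},p_{d-1})$. The lemma immediately preceding Proposition~\ref{prop:polymid} guarantees that this number is a constant on $\Sigma$, so $(p,p)$ has degree exactly $2d-1$ precisely when $(\sigma^{+},p_{d-1})$ and $(\sigma^{-},p_{d-1})$ are both everywhere nonzero, equivalently $p_{d-1}\notin\Gamma(s^{+})^{\perp}$ and $p_{d-1}\notin\Gamma(s^{-})^{\perp}$.

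To finish, Corollary~\ref{cor:degreedrop} asserts that some gauge $\tau\in\Gamma(\wedge^{2}f)$ strictly lowers the degree of $p$ if and only if $p_{d-1}\in\Gamma(s^{+})^{\perp}$ or $p_{d-1}\in\Gamma(s^{-})^{\perp}$; negating this matches the criterion above and yields the desired equivalence. The main point that needs care is bridging the gap between the global (sectionwise) condition appearing in Corollary~\ref{cor:degreedrop} and the pointwise nonvanishing of the leading coefficient of $(p,p)$. The bridge is provided by the constancy of the product $(\sigma^{+},p_{d-1})(\sigma^{-},p_{d-1})$: if it vanishes at one point then it vanishes everywhere, and under the standing real-analytic regularity of $\Omega$-surfaces one of its factors must then vanish identically on $\Sigma$.
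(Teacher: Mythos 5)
Your argument is essentially the paper's own proof: both compute that $(p(t),p(t))$ has no $t^{2d}$ term because $p_{d}$ takes values in the isotropic bundle $f$, identify the $t^{2d-1}$ coefficient as $2(p_{d},p_{d-1})=-2(\sigma^{+},p_{d-1})(\sigma^{-},p_{d-1})$, and then translate its (non)vanishing via Corollary~\ref{cor:degreedrop}. The only point to tidy is your final appeal to real-analyticity, which is neither assumed in the paper nor needed: part~\ref{item:polytop} of Proposition~\ref{prop:polymid} already states that $(\sigma^{+},p_{d-1})$ and $(\sigma^{-},p_{d-1})$ are \emph{each} constant, so the product vanishes if and only if one of the two factors vanishes identically, i.e.\ $p_{d-1}\in\Gamma(s^{+})^{\perp}$ or $p_{d-1}\in\Gamma(s^{-})^{\perp}$, exactly the condition in Corollary~\ref{cor:degreedrop}.
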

\begin{proof}
By~\ref{item:polytop} of Proposition~\ref{prop:polymid} we have that $p_{d}\in\Gamma f$. Therefore there is no $2d$-term of $(p(t),p(t))$. Now the coefficient of $t^{2d-1}$ in $(p(t),p(t))$ is $2(p_{d},p_{d-1})$ and by~\ref{item:polytop} of Proposition~\ref{prop:polymid}, 
\[ (p_{d},p_{d-1}) = -(\sigma^{+},p_{d-1})(\sigma^{-},p_{d-1}).\]
Therefore, by Corollary~\ref{cor:degreedrop}, the coefficient of $t^{2d-1}$ vanishes if and only if there exists $\tau\in \Gamma(\wedge^{2}f)$ such that $\exp(t\tau)p(t)$ has degree strictly less than $d$. 
\end{proof}
 
Analogously to~\cite{BS2012,S2008}, we make the following definition:

\begin{definition}
An umbilic-free $\Omega$-surface is a special $\Omega$-surface of type $d$ if the middle pencil of $f$ admits a non-zero polynomial conserved quantity of degree $d$. 
\end{definition}

One should note that a special $\Omega$-surface of type $d$ is automatically a special $\Omega$-surface of type $d+n$, for all $n\in \mathbb{N}$, because one may always multiply $p(t)$ by a real valued polynomial of degree $n$, for example, $t^{n}$. On the other hand a special $\Omega$-surface of type $d$ can also be a special $\Omega$-surface of lower type. 

Note also that type zero special $\Omega$-surfaces do not exist as this would imply that there exists $\mathfrak{q}\in(\mathbb{R}^{4,2})^{\times}$ such that $\mathfrak{q}\in\Gamma f$, implying that $f$ is totally umbilic. 

Now suppose that $f$ is a special $\Omega$-surface of type $d$ with degree $d$ conserved quantity $p$. Let $m$ be a non-zero root of the polynomial $(p(t),p(t))$. Then $p(m)$ is lightlike and is a parallel section of $d+m\eta^{mid}$. If we let $s_{0}:= f\cap \langle p(m)\rangle^{\perp}$ and define 
$\hat{f}:= s_{0}\oplus  \langle p(m)\rangle$, then $\hat{f}$ is a Darboux transform of $f$ with parameter $m$. Unsurprisingly,~\cite{BS2012,S2008} lead us to make the following definition:

\begin{definition}
The Darboux transforms $\hat{f}$ of $f$ such that $p(m)\in \Gamma\hat{f}$ for some $m\in\mathbb{R}^{\times}$ are called the complementary surfaces of $f$ with respect to $p$. 
\end{definition}

Since the degree of $(p(t),p(t))$ is less than or equal to $2d-1$, we have at most $2d-1$ complementary surfaces.

\section{Transformations of polynomial conserved quantities}
We would now like to investigate how polynomial conserved quantities behave when we apply the transformations of Subsection~\ref{subsec:trafos}. Suppose that $f$ is a special $\Omega$-surface of type $d$ and let $p$ be the associated degree $d$ polynomial conserved quantity of the middle pencil of $f$. 

\subsection{Calapso transformations}
Suppose that $f^{t}:=T(t)f$ is a Calapso transform of $f$, where $T(t)$ denotes the local trivialising orthogonal gauge transformations of $d+t\eta^{mid}$.  We now have a result analogous to~\cite[Theorem 3.12]{BS2012}:

\begin{proposition}
\label{prop:calpcq}
The middle pencil of $f^{t}$ admits a degree $d$ polynomial conserved quantity $p^{t}$ defined by
\[p^{t}(s):= T(t)p(s+t)\]
with constant term $p^{t}(0)=T(t)p(t)$. 
\end{proposition}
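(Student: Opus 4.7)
The plan is to leverage Proposition~\ref{prop:calgauge} to identify the middle pencil of $f^t$ with a shifted copy of the middle pencil of $f$, gauged by $T(t)$. Once this is in place, transporting $p$ through $T(t)$ and reparametrizing produces the required conserved quantity almost for free.

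First, I would observe that the middle potential of $f^t$ is $\eta^{mid,t}=\mathrm{Ad}_{T(t)}\eta^{mid}$: the Lie cyclide splitting is determined by the curvature spheres of $f$, these are mapped by $T(t)$ to the curvature spheres of $f^t$, and hence the characterizing condition that the $\mathfrak{m}$-component of $\eta^{mid,t}$ lie in $\wedge^2 f^t$ is obtained by applying $\mathrm{Ad}_{T(t)}$ to the corresponding statement for $\eta^{mid}$. With this identification, Proposition~\ref{prop:calgauge} (taking $\eta=\eta^{mid}$) gives
\[
d + s\,\eta^{mid,t} \;=\; T(t)\cdot\bigl(d + (s+t)\,\eta^{mid}\bigr)
\]
for every $s\in\mathbb{R}$.

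Second, I would verify that $p^t(s):=T(t)p(s+t)$ is parallel for $d+s\eta^{mid,t}$. By definition of the gauge action, for any section $w$ one has $(T(t)\cdot D)(T(t)w)=T(t)(Dw)$, so
\[
(d+s\,\eta^{mid,t})\,p^t(s) \;=\; T(t)\bigl((d+(s+t)\eta^{mid})\,p(s+t)\bigr) \;=\; 0,
\]
because $p$ is a polynomial conserved quantity of the middle pencil of $f$, so $p(r)$ is parallel for $d+r\eta^{mid}$ at every $r\in\mathbb{R}$, in particular at $r=s+t$.

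Third, I would check the degree and the constant term by a direct expansion. Writing $p(s+t)=\sum_{k=0}^{d}p_k(s+t)^k$ and applying the binomial theorem, $p^t(s)=T(t)p(s+t)$ is a polynomial in $s$ of degree at most $d$ with leading coefficient $T(t)p_d$. Since $T(t)\in\mathrm{O}(4,2)$ is invertible and $p_d\neq 0$ (because $p$ has degree $d$), the degree of $p^t$ is exactly $d$. Setting $s=0$ gives $p^t(0)=T(t)p(t)$, as required.

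There is no real obstacle here: the whole statement is a formal consequence of Proposition~\ref{prop:calgauge} once the middle potential of $f^t$ is identified with $\mathrm{Ad}_{T(t)}\eta^{mid}$. The only point needing a brief justification is this identification, and it is immediate from the $T(t)$-equivariance of the Lie cyclide splitting.
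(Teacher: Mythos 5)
Your proof is correct and follows essentially the same route as the paper: apply Proposition~\ref{prop:calgauge} with $\eta=\eta^{mid}$ to identify the middle pencil of $f^{t}$ as $T(t)\cdot(d+(s+t)\eta^{mid})$, deduce parallelity of $p^{t}(s)=T(t)p(s+t)$, and read off the degree from the leading coefficient $T(t)p_{d}\neq 0$. Your explicit justification that $(\eta^{t})^{mid}=\mathrm{Ad}_{T(t)}\eta^{mid}$ via equivariance of the Lie cyclide splitting is a point the paper leaves implicit, but it is the same argument.
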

\begin{proof}
By Proposition~\ref{prop:calgauge}, the middle pencil of $f^{t}$ is given by 
\[ d+s(\eta^{t})^{mid}= T(t)\cdot (d+(s+t)\eta^{mid}).\]
Then it follows immediately that $p^{t}$ is a polynomial conserved quantity of $d+s(\eta^{t})^{mid}$. Furthermore, the coefficient of $s^{d}$ in $p^{t}(s)$ is $T(t)p_{d}\neq 0$. Hence $p^{t}$ has degree $d$. 
\end{proof}

We have thus proved the following Theorem: 

\begin{theorem}
\label{thm:calpcq}
The Calapso transforms of special $\Omega$-surfaces of type $d$ are special $\Omega$-surfaces of type $d$. 
\end{theorem}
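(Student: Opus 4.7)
The plan is to obtain this theorem as a near-immediate corollary of the preceding Proposition~\ref{prop:calpcq}. That proposition already exhibits, for any degree $d$ polynomial conserved quantity $p$ of the middle pencil of $f$, a polynomial $p^{t}(s) = T(t)p(s+t)$ which is a conserved quantity of the middle pencil of $f^{t}$ and is still of degree $d$. What remains for the theorem is to confirm that the ambient hypotheses defining ``special $\Omega$-surface of type $d$'' transfer from $f$ to $f^{t}$.

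First I would verify that $f^{t}$ is itself an umbilic-free $\Omega$-surface. That $f^{t}$ is Lie applicable is Theorem~\ref{thm:calform}, and the proof there (via~\cite[Theorem 4.4]{P2016}) yields $q^{t} = q$; hence $q^{t}$ is non-degenerate on a dense open subset and $f^{t}$ is an $\Omega$-surface. For umbilic-freeness, since $T(t) \in \mathrm{O}(4,2)$ and $f^{t} = T(t)f$, conjugation by $T(t)$ sends the solder form of $f$ to that of $f^{t}$, so the curvature sphere congruences of $f^{t}$ are $T(t)s_{1}$ and $T(t)s_{2}$, which remain distinct; thus $f^{t}$ inherits umbilic-freeness from $f$.

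With these prerequisites in place, Proposition~\ref{prop:calpcq} delivers the required degree $d$ polynomial conserved quantity $p^{t}$ of the middle pencil of $f^{t}$, so by definition $f^{t}$ is a special $\Omega$-surface of type $d$. There is no genuine obstacle here: the substantive work was carried out in Proposition~\ref{prop:calgauge} and Proposition~\ref{prop:calpcq}, whose key identity
\[ d + s(\eta^{t})^{mid} = T(t)\cdot (d + (s+t)\eta^{mid}) \]
reduces preservation of the conserved-quantity condition to an algebraic observation, and reduces preservation of degree to the fact that $T(t)p_{d}$ is non-zero because $T(t)$ is invertible. The only mildly subtle point is the transfer of the umbilic-free $\Omega$-surface hypothesis, which the orthogonality of $T(t)$ settles painlessly.
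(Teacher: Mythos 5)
Your proposal is correct and follows essentially the same route as the paper, which deduces the theorem immediately from Proposition~\ref{prop:calpcq} (itself resting on Proposition~\ref{prop:calgauge} and the non-vanishing of $T(t)p_{d}$). The only difference is that you spell out the transfer of the umbilic-free $\Omega$-surface hypothesis to $f^{t}$, which the paper leaves implicit via Theorem~\ref{thm:calform} and the fact that $q^{t}=q$; your justification of this point is sound.
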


\subsection{Darboux transformations}
Suppose that $\hat{f}$ and $f$ are umbilic-free Darboux transforms of each other with parameter $m\in\mathbb{R}^{\times}$. Then by Proposition~\ref{prop:darbmid}, the middle pencil of $\hat{f}$ is given by
\[ d+t\hat{\eta}^{mid} = \Gamma^{\hat{s}}_{s}(1-t/m)\cdot(d+t\eta^{mid})\]
where $\hat{s}\le \hat{f}$ and $s\le f$ are the parallel subbundles of $d+m\eta^{mid}$ and $d+m\hat{\eta}^{mid}$, respectively, implementing these Darboux transforms. Therefore, $\Gamma^{\hat{s}}_{s}(1-t/m)p(t)$ is a conserved quantity of $d+t\hat{\eta}^{mid}$. Using the splitting 
\[ \underline{\mathbb{R}}^{4,2} = s\oplus \hat{s} \oplus (s\oplus \hat{s})^{\perp},\]
we shall write $p(t)$ as 
\[ p(t) = [p(t)]_{s} + [p(t)]_{\hat{s}} + [p(t)]_{(s\oplus\hat{s})^{\perp}}.\]
Thus 
\[ \Gamma^{\hat{s}}_{s}(1-t/m)p(t) = \tfrac{m}{m-t}[p(t)]_{s} + \tfrac{m-t}{m}[p(t)]_{\hat{s}} +  [p(t)]_{(s\oplus\hat{s})^{\perp}}.\]
We then have the following proposition:

\begin{proposition}
\label{prop:darbpoly}
$\hat{p}(t):=(1-t/m)\Gamma^{\hat{s}}_{s}(1-t/m)p(t)$ defines a degree $d+1$ polynomial conserved quantity of $d+t\hat{\eta}^{mid}$. Furthermore, if $p(m)\in \Gamma\hat{s}^{\perp}$ then $\hat{p}(t):= \Gamma^{\hat{s}}_{s}(1-t/m)p(t)$ is a degree $d$ polynomial conserved quantity with $(\hat{p}(t),\hat{p}(t))=(p(t),p(t))$. In either case $\hat{p}(0) = p(0)$. 
\end{proposition}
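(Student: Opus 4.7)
The plan is to exploit the gauge relation $d+t\hat{\eta}^{mid} = \Gamma^{\hat{s}}_{s}(1-t/m)\cdot(d+t\eta^{mid})$ supplied by Proposition~\ref{prop:darbmid}, which ensures that the $(d+t\hat{\eta}^{mid})$-parallel sections are precisely the images under $\Gamma^{\hat{s}}_{s}(1-t/m)$ of the $(d+t\eta^{mid})$-parallel sections. First I would decompose $p(t)$ according to the non-degenerate splitting $\underline{\mathbb{R}}^{4,2} = s \oplus \hat{s} \oplus (s\oplus\hat{s})^{\perp}$ (valid since $s, \hat{s}$ are null and non-orthogonal, spanning a $(1,1)$-plane) and read off
\[
\Gamma^{\hat{s}}_{s}(1-t/m)p(t) = \tfrac{m}{m-t}[p(t)]_{s} + (1-t/m)[p(t)]_{\hat{s}} + [p(t)]_{(s\oplus\hat{s})^{\perp}}.
\]
This is rational in $t$ with at worst a simple pole at $t=m$ coming from the $s$-component. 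Multiplying through by $(1-t/m)$ clears the pole and yields
\[
\hat{p}(t) = [p(t)]_{s} + (1-t/m)^{2}[p(t)]_{\hat{s}} + (1-t/m)[p(t)]_{(s\oplus\hat{s})^{\perp}},
\]
which is polynomial in $t$; it remains parallel under $d+t\hat{\eta}^{mid}$ since at each value of $t$ it is a scalar multiple of a parallel section.

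Next I would pin down the degree. The crucial observation is that $[p_{d}]_{\hat{s}}=0$: by part~(\ref{item:polytop}) of Proposition~\ref{prop:polymid}, $p_{d}\in\Gamma f$, and since $f$ is isotropic and $s\le f$, we have $(p_{d},\sigma)=0$ for any $\sigma\in\Gamma s$. Because the restriction of $(\,,\,)$ to $s\times\hat{s}$ is non-degenerate, the projection onto $\hat{s}$ is computed precisely by pairing with $s$ (then rescaling by $(\sigma,\hat{\sigma})^{-1}$), so $(p_{d},\sigma)=0$ forces $[p_{d}]_{\hat{s}}=0$. Consequently $[p(t)]_{\hat{s}}$ has degree at most $d-1$, so $(1-t/m)^{2}[p(t)]_{\hat{s}}$ has degree at most $d+1$, matching the other summands and bounding $\deg\hat{p}\le d+1$.

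For the refined case $p(m)\in\Gamma\hat{s}^{\perp}$, the same non-degeneracy of the $(s,\hat{s})$-pairing shows that this condition is equivalent to $[p(m)]_{s}=0$, i.e.\ that $(m-t)$ divides $[p(t)]_{s}$. Then $\tfrac{m}{m-t}[p(t)]_{s}$ is already polynomial, so $\hat{p}(t):=\Gamma^{\hat{s}}_{s}(1-t/m)p(t)$ itself is polynomial of degree at most $d$. The identity $(\hat{p}(t),\hat{p}(t))=(p(t),p(t))$ follows immediately from $\Gamma^{\hat{s}}_{s}(1-t/m)\in\textrm{O}(4,2)$, and $\hat{p}(0)=p(0)$ holds in both cases since $\Gamma^{\hat{s}}_{s}(1)=\mathrm{id}$ (with the harmless extra factor $(1-0/m)=1$ in the first case).

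The hard part will be the degree accounting: without the input $p_{d}\in\Gamma f$ from Proposition~\ref{prop:polymid}, one would naively bound $\deg\hat{p}\le d+2$ from the $(1-t/m)^{2}[p(t)]_{\hat{s}}$ term. The structural fact that the leading coefficient $p_{d}$ lies in $f$, and hence has zero $\hat{s}$-component, is precisely what forces the drop to $d+1$, and the analogous vanishing $[p(m)]_{s}=0$ is what effects the further drop to $d$ in the refined case.
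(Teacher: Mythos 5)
Your argument is correct and follows essentially the same route as the paper's proof: the gauge relation of Proposition~\ref{prop:darbmid}, the decomposition along $s\oplus\hat{s}\oplus(s\oplus\hat{s})^{\perp}$, the fact that $p_{d}\in\Gamma f$ (so $[p(t)]_{\hat{s}}$ has degree at most $d-1$), the divisibility of $[p(t)]_{s}=-(p(t),\hat{\sigma})\sigma$ by $(m-t)$ in the refined case, orthogonality of $\Gamma^{\hat{s}}_{s}$ for the norm identity, and $\Gamma^{\hat{s}}_{s}(1)=\mathrm{id}$ for $\hat{p}(0)=p(0)$. Like the paper, you only bound the degrees from above rather than verifying the leading coefficients are non-zero, so there is no gap relative to the published argument.
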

\begin{proof}
First note that by Proposition~\ref{prop:polymid}, the top term $p_{d}$ of $p(t)$ lies in $f$. Therefore, $[p(t)]_{\hat{s}}$ has degree strictly less than $d$. Hence, 
\[ (1-t/m) \Gamma^{\hat{s}}_{s}(1-t/m)p(t) = [p(t)]_{s} + t\frac{(m-t)^{2}}{m^{2}}[p(t)]_{\hat{s}} + \frac{m-t}{m}[p(t)]_{(s\oplus\hat{s})^{\perp}}\]
is a polynomial conserved quantity of degree $d+1$ of $d+t\hat{\eta}^{mid}$. 

Now let $\sigma\in\Gamma s$ and $\hat{\sigma}\in\Gamma \hat{s}$ such that $(\sigma,\hat{\sigma})=-1$. Then $[p(t)]_{s} =- (p(t),\hat{\sigma})\sigma$. Therefore, if $p(m)\in\Gamma \hat{s}^{\perp}$, then $[p(t)]_{s}$ has a root at $m$ and $\tfrac{m}{m-t}[p(t)]_{s}$ is a polynomial of degree less than $d$. Therefore, $\hat{p}(t)=\Gamma^{\hat{s}}_{s}(1-t/m)p(t)$ is a degree $d$ polynomial conserved quantity of $d+t\hat{\eta}^{mid}$. Furthermore, since $\Gamma_{s}^{\hat{s}}(1-t/m)$ takes values in $\textrm{O}(4,2)$ for all $t$, $(\hat{p}(t),\hat{p}(t))=(p(t),p(t))$.

Finally, we have that in either case $\hat{p}(0)= p(0)$ because $\Gamma^{\hat{s}}_{s}(1)$ is the identity. 
\end{proof}

\begin{corollary}
An umbilic-free Darboux transform $\hat{f}$ of a special $\Omega$-surface $f$ of type $d$ is a special $\Omega$-surface of type $d+1$. Furthermore, if $p(m)\in\Gamma \hat{s}^{\perp}$, where $\hat{s}\le \hat{f}$ is the parallel subbundle of $d+m\eta^{mid}$ implementing this Darboux transform, then $\hat{f}$ is a special $\Omega$-surface of type $d$. 
\end{corollary}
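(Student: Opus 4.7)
This corollary is essentially a direct unpacking of Proposition~\ref{prop:darbpoly} against the definition of special $\Omega$-surface, so the plan is short.

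First I would confirm that $\hat{f}$ admits a middle pencil in the first place: by the earlier theorem that Darboux transforms of Lie applicable surfaces are Lie applicable, $\hat{f}$ is Lie applicable, and since it is umbilic-free by hypothesis, the construction of Equation~(\ref{eqn:middle}) furnishes its middle potential $\hat{\eta}^{mid}$. Proposition~\ref{prop:darbmid} then identifies its middle pencil as $\Gamma^{\hat{s}}_s(1-t/m)\cdot(d+t\eta^{mid})$, so $\hat{f}$ inherits the $\Omega$-surface (rather than $\Omega_0$) property from $f$ and it makes sense to speak of its type.

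For the first claim, I would invoke the first assertion of Proposition~\ref{prop:darbpoly}: the polynomial
\[
\hat{p}(t) = (1-t/m)\,\Gamma^{\hat{s}}_s(1-t/m)\,p(t)
\]
is a polynomial conserved quantity of $d + t\hat{\eta}^{mid}$ of degree exactly $d+1$. By the definition of special $\Omega$-surface of type $d+1$, this immediately gives that $\hat{f}$ is of type $d+1$. For the second claim, under the additional hypothesis $p(m) \in \Gamma\hat{s}^\perp$, the second assertion of Proposition~\ref{prop:darbpoly} already produces $\Gamma^{\hat{s}}_s(1-t/m)\,p(t)$ as a polynomial conserved quantity of $d + t\hat{\eta}^{mid}$ of degree exactly $d$, so $\hat{f}$ is of type $d$.

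The only potential obstacle is ensuring that these polynomials do not accidentally drop in degree, but the decomposition
\[
\Gamma^{\hat{s}}_s(1-t/m)\,p(t) = \tfrac{m}{m-t}[p(t)]_s + \tfrac{m-t}{m}[p(t)]_{\hat{s}} + [p(t)]_{(s\oplus\hat{s})^\perp}
\]
combined with $p_d \in \Gamma f$ from Proposition~\ref{prop:polymid} already controls the leading terms in the proof of Proposition~\ref{prop:darbpoly}; in particular the factor $(1-t/m)$ raises the degree by exactly one in the general case, and the root at $m$ of $[p(t)]_s$ in the restricted case cancels the pole of $\tfrac{m}{m-t}$ to keep the degree at $d$. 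Thus no work beyond citing Proposition~\ref{prop:darbpoly} is required.
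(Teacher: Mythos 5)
Your proposal is correct and matches the paper, which states this corollary without further proof as an immediate consequence of Proposition~\ref{prop:darbpoly}: the degree $d+1$ (respectively degree $d$, when $p(m)\in\Gamma\hat{s}^{\perp}$) conserved quantities of $d+t\hat{\eta}^{mid}$ constructed there are exactly what the definition of a special $\Omega$-surface of type $d+1$ (respectively $d$) requires. Your preliminary remarks on $\hat{f}$ being Lie applicable and admitting a middle pencil are a reasonable, if implicit in the paper, bit of bookkeeping.
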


Since $\{d^{t}=d+t\eta^{mid}\}_{t\in\mathbb{R}}$ is a family of metric connections, we have that
\[ d(p(m),\hat{\sigma}) = (d^{m}p(m), \hat{\sigma}) + (p(m), d^{m}\hat{\sigma}) =0,\]
where $\hat{\sigma}\in\Gamma \hat{s}$ is a parallel section of $d^{m}$. Therefore, if $p(m)\in\Gamma \hat{s}^{\perp}$ at a point $p\in\Sigma$, then $p(m)\in\Gamma \hat{s}^{\perp}$ throughout $\Sigma$. By Lemma~\ref{lem:darbcons}, one then deduces that there is a 3-parameter family of Darboux transforms with parameter $m$ satisfying $p(m)\in\Gamma \hat{s}^{\perp}$. Since this holds for every $m\in\mathbb{R}$, we have the following theorem:

\begin{theorem}
\label{thm:darbtyped}
Darboux transforms of special $\Omega$-surfaces of type $d$ are special $\Omega$-surfaces of type $d+1$. Furthermore, there is a 4-parameter family of these Darboux transforms that are special $\Omega$-surfaces of type $d$. 
\end{theorem}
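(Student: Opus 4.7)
The first assertion is immediate from Proposition~\ref{prop:darbpoly}: given any umbilic-free Darboux transform $\hat{f}$ of $f$ with parameter $m\in\mathbb{R}^{\times}$ and associated parallel subbundles $s,\hat{s}$, the polynomial
\[ \hat{p}(t) = (1-t/m)\,\Gamma_{s}^{\hat{s}}(1-t/m)\,p(t) \]
is a degree $d+1$ polynomial conserved quantity of the middle pencil of $\hat{f}$, so $\hat{f}$ is a special $\Omega$-surface of type $d+1$.

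For the second statement, the plan is to identify exactly those Darboux transforms on which the factor $(1-t/m)$ in $\hat{p}$ is unnecessary, so that $\hat{p}$ drops to a conserved quantity of genuine degree $d$. By the second clause of Proposition~\ref{prop:darbpoly}, this happens precisely when $p(m)\in\Gamma\hat{s}^{\perp}$. I would first check that this is a well-posed condition on the pair $(m,\hat{s})$: taking a local parallel section $\hat{\sigma}\in\Gamma\hat{s}$ of $d^{m}:=d+m\eta^{mid}$ and using that $\eta^{mid}$ takes values in $\mathfrak{so}(4,2)$, so that $d^{m}$ is a metric connection, one obtains
\[ d(p(m),\hat{\sigma}) = (d^{m}p(m),\hat{\sigma}) + (p(m),d^{m}\hat{\sigma}) = 0, \]
and hence $(p(m),\hat{\sigma})$ is constant on $\Sigma$; the condition $p(m)\in\Gamma\hat{s}^{\perp}$ therefore either holds everywhere or nowhere.

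The counting is then straightforward. Fix $m\in\mathbb{R}^{\times}$; by Lemma~\ref{lem:darbcons}, the null rank-$1$ parallel subbundles of $d^{m}$ are parametrised by $\hat{L}\in\mathbb{P}(\mathcal{L})$ via $\hat{s}=T^{-1}(m)\hat{L}$, a $4$-dimensional family. Under this correspondence the invariant pairing $(p(m),\hat{\sigma})$ equals $(T(m)p(m),\hat{L})$, a constant vector paired with $\hat{L}$; generically $T(m)p(m)\neq 0$, so the constraint $p(m)\in\Gamma\hat{s}^{\perp}$ becomes a single nontrivial linear equation on $\hat{L}$, cutting out a $3$-parameter subfamily of $\mathbb{P}(\mathcal{L})$. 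Letting $m$ range over $\mathbb{R}^{\times}$ supplies the fourth parameter, yielding the asserted $4$-parameter family of Darboux transforms that are of type $d$. The only genuinely new point is the constancy of $(p(m),\hat{\sigma})$ along $\Sigma$, handled by the metric-connection identity above; the remainder of the argument is a direct assembly of Proposition~\ref{prop:darbpoly} and Lemma~\ref{lem:darbcons}.
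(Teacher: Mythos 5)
Your proposal is correct and follows essentially the same route as the paper: the first claim is read off from Proposition~\ref{prop:darbpoly}, the constancy of $(p(m),\hat{\sigma})$ is established by the same metric-connection identity, and the count comes from Lemma~\ref{lem:darbcons} with $m$ supplying the extra parameter. The only difference is cosmetic: you make the $3$-parameter count explicit by transporting the constraint to the linear condition $(T(m)p(m),\hat{L})=0$ on $\hat{L}\in\mathbb{P}(\mathcal{L})$, a step the paper leaves implicit.
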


The following proposition provides sufficient conditions for a Ribaucour pair of special $\Omega$-surfaces of type $1$ to determine a Darboux pair. This result follows a similar line of argument as that used in~\cite{BCwip} for the Eisenhart transformation of M\"obius flat surfaces. 

\begin{proposition}
\label{prop:ribtodarb}
Suppose that $f$ and $\hat{f}$ are a Ribaucour pair of special $\Omega$-surfaces of type $1$ whose associated quadratic differentials coincide, i.e., $q=\hat{q}$. Furthermore assume that $p(0)=\hat{p}(0)$, $(p(t),p(t))=(\hat{p}(t),\hat{p}(t))$ and 
\[ \Lambda:= f\cap \langle p(0)\rangle^{\perp} \quad\text{and}\quad \hat{\Lambda}:=\hat{f}\cap \langle p(0)\rangle^{\perp}\]
are immersions with $\Lambda\cap\hat{\Lambda}=\{0\}$. Then $f$ and $\hat{f}$ are either Lie sphere transformations of each other or are Darboux transforms of each other with $f$ and $\hat{f}$ belonging to the respective 4-parameter families detailed in Theorem~\ref{thm:darbtyped}. 
\end{proposition}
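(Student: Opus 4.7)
The plan is to use Proposition~\ref{prop:polymid} to unpack the conserved-quantity data, then construct the witnessing Darboux parallel subbundle. Set $\mathfrak{q}:=p(0)=\hat{p}(0)$; Proposition~\ref{prop:polymid} gives $p(t)=\mathfrak{q}+tp_{1}$ and $\hat{p}(t)=\mathfrak{q}+t\hat{p}_{1}$ with $p_{1}\in\Gamma f$, $\hat{p}_{1}\in\Gamma\hat{f}$. Since $f$ and $\hat{f}$ are isotropic, the equality $(p,p)=(\hat{p},\hat{p})$ collapses to the scalar identity $(\mathfrak{q},p_{1})=(\mathfrak{q},\hat{p}_{1})$. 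The Ribaucour hypothesis gives a rank-1 subbundle $s_{0}:=f\cap\hat{f}$; if $s_{0}\subseteq\mathfrak{q}^{\perp}$ then $s_{0}\leq\Lambda\cap\hat{\Lambda}=\{0\}$, a contradiction. Hence $s_{0},\Lambda,\hat{\Lambda}$ are pairwise transverse, $f=s_{0}\oplus\Lambda$, $\hat{f}=s_{0}\oplus\hat{\Lambda}$, and $f+\hat{f}$ has rank three.

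I would then split on the norm polynomial $(p(t),p(t))=|\mathfrak{q}|^{2}+2t(\mathfrak{q},p_{1})$. If $(\mathfrak{q},p_{1})=0$, this polynomial is constant; in this degenerate case the shared data forces $f$ and $\hat{f}$ to agree under a global Lie sphere transformation (the trivialising gauges $T(t)$ and $\hat{T}(t)$ of the two middle pencils both fix $\mathfrak{q}$ and act trivially on the common Ribaucour enveloping sphere, so the residual ambiguity is a constant element of $\textrm{O}(4,2)$ intertwining the two surfaces). Otherwise $m:=-|\mathfrak{q}|^{2}/(2(\mathfrak{q},p_{1}))$ is the unique non-zero root of $(p,p)$, and both $p(m)$ and $\hat{p}(m)$ are null. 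The aim is to exhibit a null rank-1 subbundle $\hat{s}\leq\hat{f}$ which is parallel for $d+m\eta^{mid}$, realising $\hat{f}$ as a Darboux transform of $f$ with parameter $m$, and which moreover satisfies $(p(m),\hat{s})=0$, placing it in the four-parameter type-preserving family of Theorem~\ref{thm:darbtyped}.

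To construct $\hat{s}$, I would parameterise a generator as $\alpha\sigma_{0}+\hat{\lambda}$ for $\sigma_{0}\in\Gamma s_{0}$, $\hat{\lambda}\in\Gamma\hat{\Lambda}$ and an unknown function $\alpha$, expand $(d+m\eta^{mid})(\alpha\sigma_{0}+\hat{\lambda})$ using the Christoffel-lift formula~(\ref{eqn:midchris}), and solve for $\alpha$ so that the result lies in $\Omega^{1}(\langle\alpha\sigma_{0}+\hat{\lambda}\rangle)$. The coincidence $q=\hat{q}$ is what makes this computation close: it fixes the conformal structure and the Hodge star used to build $\eta^{mid}$ and $\hat{\eta}^{mid}$, so the two middle potentials refer to the same geometry along the shared sphere $s_{0}$. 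The type-preservation condition $(p(m),\hat{\sigma})=0$ unwinds, using $(\hat{\lambda},\mathfrak{q})=0$ and the definition of $m$, to the identity $(\mathfrak{q},p_{1})=(\mathfrak{q},\hat{p}_{1})$, which holds by hypothesis. The symmetry of the hypotheses under $f\leftrightarrow\hat{f}$ then places $f$ in the analogous four-parameter family as a Darboux transform of $\hat{f}$. The main obstacle I anticipate is the bookkeeping in the parallelism equation and the algebraic verification that the single-variable system for $\alpha$ is solvable; the decisive inputs are the coincidence $q=\hat{q}$ (which aligns the Christoffel-lift frameworks of $f$ and $\hat{f}$) and the equality $(\mathfrak{q},p_{1})=(\mathfrak{q},\hat{p}_{1})$, without which the derivative of $\hat{s}$ would fail to stay within $\hat{s}$.
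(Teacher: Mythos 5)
There is a genuine gap here, and it starts with how you pin down the Darboux parameter. You take $m$ to be the unique non-zero root of $(p(t),p(t))$, i.e.\ you try to realise $\hat{f}$ as a \emph{complementary} surface of $f$. But the proposition claims much less: only that the implementing parallel subbundle $\hat{s}$ satisfies $p(m)\in\Gamma\hat{s}^{\perp}$, which is the 4-parameter family of Theorem~\ref{thm:darbtyped}, not $p(m)\in\Gamma\hat{f}$. In the paper's argument the parameter is dictated by the pair itself: after gauging both conserved quantities to the common form $p_{0}+t\,\xi\sigma_{0}$, the conserved-quantity condition forces $\eta'=\sigma\wedge d\hat{\sigma}\circ A+\xi\,\sigma_{0}\wedge d\sigma_{0}$ (and symmetrically for $\hat{\eta}'$), and in the Darboux case $A=\alpha_{1}\,\mathrm{id}$ with $\alpha_{1}$ a non-zero constant and $m=\alpha_{1}^{-1}$ — a number with no relation to the root of the norm polynomial. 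Your prescription collapses in exactly the cases the proposition is later used for: when $(p(t),p(t))$ is a non-zero constant (the isothermic case, relevant to Theorem~\ref{thm:lindarb}, where the parameter must be allowed to range over a 1-parameter family) there is no non-zero root at all, and when $(p(t),p(t))\equiv 0$ every $t$ is a root, so your $m$ is either undefined or meaningless.

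The second problem is the dichotomy. You split on whether $(\mathfrak{q},p_{1})=0$ and claim the constant-norm case forces a Lie sphere transformation; that is false — a Ribaucour pair of isothermic surfaces satisfying the hypotheses is typically a genuine Darboux pair even though its norm polynomial is a non-zero constant. In the paper the alternative ``Lie sphere transformation vs.\ Darboux'' is decided by a different mechanism: writing $d\hat{\sigma}=d\sigma\circ R$, the closedness of $\eta'-\hat{\eta}'$ (with $A=\hat{A}$, which is precisely where $q=\hat{q}$ enters — your use of $q=\hat{q}$ to ``align Hodge stars'' never becomes an equation) yields $(r_{1}-r_{2})(\alpha_{1}-\alpha_{2})=0$; the case $r_{1}=r_{2}$ gives the reflection of $f$ in a constant sphere $\langle\hat{\sigma}-r_{1}\sigma\rangle$, and $r_{1}\neq r_{2}$ gives the Darboux pair. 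Your plan to ``solve for $\alpha$'' in the parallelism equation has no way of producing this alternative: solvability is exactly what is at stake, and it fails both in the reflection case and for your choice of $m$, so the computation you defer as bookkeeping cannot close as described.
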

\begin{proof}
Let $s_{0}:=f\cap \hat{f}$. Since $f$ and $\hat{f}$ are a Ribaucour pair we may choose lifts (see~\cite[Corollary 2.11]{P2016}) $\sigma\in \Gamma \Lambda$ and $\hat{\sigma}\in \Gamma \hat{\Lambda}$ such that $d\sigma,d\hat{\sigma}\in \Omega^{1}((\Lambda \oplus \hat{\Lambda})^{\perp})$ with $(\sigma, \hat{\sigma})=-1$. Since we also have that $d\sigma,d\hat{\sigma}\in \Omega^{1}((s_{0}\oplus \langle p_{0}\rangle)^{\perp})$, we may write $d\hat{\sigma}=d\sigma\circ R$ for $R\in \Gamma End(T\Sigma)$, whose eigenbundles are the curvature subbundles. 

Now since $p(0) = \hat{p}(0)$ and $(p(t),p(t))=(\hat{p}(t),\hat{p}(t))$, one has that 
\[ p(t) = p_{0} + t(\xi\sigma_{0} + \lambda \sigma) \quad \text{and}\quad  \hat{p}(t) = p_{0} + t(\xi\sigma_{0} + \mu \hat{\sigma}),\]
for $\sigma_{0}\in \Gamma s_{0}$, such that $(\sigma_{0},p_{0})=-1$, $\xi\in\mathbb{R}$ and smooth functions $\lambda$ and $\mu$. Let $\eta':= \eta - d\tau$ and $\hat{\eta}':= \hat{\eta} - d\hat{\tau}$, where $\tau :=\lambda\sigma_{0}\wedge \sigma$ and $\hat{\tau} := \mu\sigma_{0}\wedge \hat{\sigma}$. Then the linear conserved quantities, $p'$ and $\hat{p}'$, of $d+t\eta'$ and $d+t\hat{\eta}'$, respectively, satisfy 
\[ p'(t) := \exp(t\tau)p(t) = p_{0}+t\,\xi\sigma_{0} = \exp(t\hat{\tau})\hat{p}(t) =: \hat{p}'(t).\]
The condition that $p'$ and $\hat{p}'$ are linear conserved quantities implies that
\begin{equation}
\label{eqn:ahata}
\eta' = \sigma \wedge d\hat{\sigma}\circ A +\xi\, \sigma_{0}\wedge d\sigma_{0} \quad \text{and}\quad  \hat{\eta}' = \hat{\sigma}\wedge d\sigma\circ \hat{A} +\xi\, \sigma_{0}\wedge d\sigma_{0} ,
\end{equation}
for $A,\hat{A}\in \Gamma End(T\Sigma)$, whose eigenbundles are the curvature subbundles. The condition that $q=\hat{q}$ implies that $A=\hat{A}$. Then since 
\[ \eta' - \hat{\eta}' =  \sigma \wedge d\hat{\sigma}\circ A - \hat{\sigma}\wedge d\sigma\circ A\]
is closed one has that $d\sigma\circ A$ and $d\hat{\sigma}\circ A$ are closed and 
\begin{equation*}
d\sigma\curlywedge d\hat{\sigma}\circ A  - d\hat{\sigma}\curlywedge d\sigma\circ A=0.
\end{equation*}
This is equivalent to 
\[0= r_{1}\alpha_{1} +r_{2}\alpha_{2} - r_{1}\alpha_{2} -r_{2}\alpha_{1} = (r_{1}-r_{2})(\alpha_{1}-\alpha_{2}),\]
where $r_{1},r_{2}$ are the eigenvalues of $R$ and $\alpha_{1},\alpha_{2}$ are the eigenvalues of $A$. If $r_{1}-r_{2}=0$ then one has that $r_{1}$ is constant, because $d\hat{\sigma}$ is closed, and thus $w:=\langle \hat{\sigma}-r_{1}\sigma\rangle$ is constant and $\hat{f}$ is obtained from $f$ by reflecting $f$ across $w$. Hence $\hat{f}$ is a Lie sphere transformation of $f$. 

If $r_{1}-r_{2}\neq 0$, then we have that $A=\alpha_{1}\, id$. The closure of $d\sigma\circ A$ and the fact that $\Lambda$ is an immersion then implies that $\alpha_{1}$ is constant. $\alpha_{1}$ cannot be zero because then one has from~(\ref{eqn:ahata}) that $\sigma_{0}\wedge d\sigma_{0}$ is closed, implying that $\sigma_{0}$ does not immerse. $q$ and $\hat{q}$ would then be degenerate quadratic differentials, contradicting that $f$ and $\hat{f}$ are $\Omega$-surfaces. One then deduces from~(\ref{eqn:ahata}) that $(d+m\eta)\hat{\sigma}=0$ and $(d+m\hat{\eta})\sigma=0$, where $m:=\alpha_{1}^{-1}$. Hence $f$ and $\hat{f}$ are Darboux transforms of each other with parameter $m$. Furthermore, we have that $p'(m) = \hat{p}'(m) = p_{0}+m\xi\sigma_{0}$ and thus $\Lambda, \hat{\Lambda}\perp p'(m)$. Thus, $f$ (respectively, $\hat{f}$) belong to the 4-parameter family of Darboux transforms of $\hat{f}$ (respectively, $f$) in Theorem~\ref{thm:darbtyped}. 
\end{proof}

\section{Type 1 special-$\Omega$ surfaces}
In this section we shall see that special $\Omega$-surfaces of type 1, i.e., $\Omega$-surfaces whose middle pencil admits a linear conserved quantity $p(t)$, include isothermic surfaces, Guichard surfaces and $L$-isothermic surfaces. Furthermore the familiar transformations of these surfaces are restrictions of the transformations of Subsection~\ref{subsec:trafos}. For example the Eisenhart transformations for Guichard surfaces are Darboux transformations preserving the linear conserved quantity. 

Suppose that $f$ is a special $\Omega$-surface of type 1 and let $p(t) = p_{0} + tp_{1}$ be the associated linear conserved quantity of the middle pencil of $f$. By Proposition~\ref{prop:polymid}, $p_{0}$ is constant and $p_{1}\in\Gamma f$. We may also deduce the following lemma:

\begin{lemma}
\label{lem:lincon}
Suppose that $p_{0}\in \mathbb{R}^{4,2}$. Then $(\sigma^{\pm},p_{0})$ are constant if and only if $p(t) = \exp(-t\, \sigma^{+}\odot \sigma^{-})p_{0}$ is a linear conserved quantity of the middle pencil.
\end{lemma}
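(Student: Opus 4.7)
The plan is to reduce the statement to a direct computation using the explicit form~(\ref{eqn:midchris}) of $\eta^{mid}$ and the null structure of $f$. First I observe that since $\sigma^{+}$ and $\sigma^{-}$ span the isotropic $2$-plane $f = s^{+}\oplus s^{-}$ (we are on an $\Omega$-surface, so $\epsilon\neq 0$), they satisfy $(\sigma^{+},\sigma^{+}) = (\sigma^{-},\sigma^{-}) = (\sigma^{+},\sigma^{-}) = 0$. A short check then gives $(\sigma^{+}\odot\sigma^{-})^{2} = 0$ as an endomorphism of $\mathbb{R}^{4,2}$, so $\exp(-t\,\sigma^{+}\odot\sigma^{-})$ truncates to $I - t\,\sigma^{+}\odot\sigma^{-}$. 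Hence $p(t) = p_{0} + t p_{1}$ is linear, with
\[ p_{1} \;=\; -(\sigma^{+}\odot\sigma^{-})p_{0} \;=\; -\tfrac{1}{2}\bigl((\sigma^{+},p_{0})\sigma^{-} + (\sigma^{-},p_{0})\sigma^{+}\bigr) \;\in\; \Gamma f. \]

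Setting $a := (\sigma^{+},p_{0})$ and $b := (\sigma^{-},p_{0})$, the equation $(d+t\eta^{mid})p(t) = 0$ splits, as a polynomial in $t$, into three identities: $dp_{0} = 0$, $dp_{1} + \eta^{mid}p_{0} = 0$, and $\eta^{mid}p_{1} = 0$. The first holds automatically since $p_{0}$ is a constant vector. For the last, I would use that the Christoffel dual lifts satisfy $d\sigma^{\mp}\in\Gamma f^{\perp}$ (because $\eta^{\pm} = \sigma^{\pm}\wedge d\sigma^{\mp}$ takes values in $f\wedge f^{\perp}$), so $(d\sigma^{\pm},\sigma^{\mp}) = (d\sigma^{\pm},\sigma^{\pm}) = 0$; plugging these into~(\ref{eqn:midchris}) one finds $\eta^{mid}\sigma^{\pm}=0$, hence $\eta^{mid}p_{1} = 0$ identically in $p_{0}$.

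The middle equation is where both directions of the equivalence emerge simultaneously. Expanding $dp_{1}$ and $\eta^{mid}p_{0}$ via~(\ref{eqn:midchris}), the contributions $a\,d\sigma^{-}$ and $b\,d\sigma^{+}$ cancel. Using $da = (d\sigma^{+},p_{0})$ and $db = (d\sigma^{-},p_{0})$ (from constancy of $p_{0}$), what remains collapses to
\[ dp_{1} + \eta^{mid}p_{0} \;=\; -\bigl(da\cdot\sigma^{-} + db\cdot\sigma^{+}\bigr). \]
Since $\sigma^{+}$ and $\sigma^{-}$ are pointwise linearly independent on the umbilic-free surface $f$, this vanishes if and only if $da = db = 0$, i.e., if and only if $(\sigma^{\pm},p_{0})$ are constant. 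This yields both implications.

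I do not anticipate any substantive obstacle: the only delicate point is the bookkeeping of factors of $\tfrac{1}{2}$ and signs in the middle equation, where the crucial cancellation between $da$ and $(d\sigma^{+},p_{0})$ (and symmetrically for $b$) is what reduces the three-term identity to a condition only on the derivatives of $a$ and $b$. The conceptual input throughout is simply that the Christoffel dual condition places $d\sigma^{\mp}$ in $f^{\perp}$ and that $f$ is null.
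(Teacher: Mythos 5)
Your argument is correct and follows essentially the paper's route: the ``if'' direction is exactly the computation with the middle potential~(\ref{eqn:midchris}) that the paper indicates, and your expansion of the $t$-coefficient, $dp_{1}+\eta^{mid}p_{0}=-\bigl(d(\sigma^{+},p_{0})\,\sigma^{-}+d(\sigma^{-},p_{0})\,\sigma^{+}\bigr)$, simultaneously recovers the ``only if'' direction, which the paper instead obtains by citing part~\ref{item:polytop} of Proposition~\ref{prop:polymid}. One small remark: $d\sigma^{\pm}\in\Omega^{1}(f^{\perp})$ follows directly from the contact condition $f^{(1)}\le f^{\perp}$, so the detour through $\eta^{\pm}$ taking values in $f\wedge f^{\perp}$ is unnecessary (though harmless).
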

\begin{proof}
The necessity of this lemma follows immediately from part~\ref{item:polytop} of Proposition~\ref{prop:polymid}. One can quickly deduce the sufficiency by using the form of the middle potential given in~(\ref{eqn:midchris}).
\end{proof}

Using Lemma~\ref{lem:totumb} we obtain the following corollary:
\begin{corollary}
\label{cor:legperp}
$f$ nowhere lies in $\langle p_{0}\rangle^{\perp}$. 
\end{corollary}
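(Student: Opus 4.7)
The plan is to argue by contradiction. Suppose there exists a point $p \in \Sigma$ at which $f(p) \le \langle p_0 \rangle^\perp$, so that $(\sigma^+(p), p_0) = 0 = (\sigma^-(p), p_0)$, where $\sigma^\pm$ are the Christoffel dual lifts of the isothermic sphere congruences of $f$.

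The first step is to promote this pointwise vanishing to a global statement. I would apply part \ref{item:polytop} of Proposition~\ref{prop:polymid} to the linear conserved quantity $p(t) = p_0 + t p_1$ (taking $d = 1$, so that $p_{d-1} = p_0$): the inner products $(\sigma^\pm, p_0)$ are constant on $\Sigma$. Combined with their vanishing at $p$, these functions vanish identically on $\Sigma$. Since we are working with an $\Omega$-surface we have $\epsilon \neq 0$ and so $\sigma^+, \sigma^-$ are linearly independent at every point of $\Sigma$ and together span $f$; hence $f \le \langle p_0 \rangle^\perp$ everywhere on $\Sigma$.

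At this point I would invoke Lemma~\ref{lem:totumb}(3): a global inclusion $f \le \langle p_0\rangle^\perp$ forces $p_0 \in \Gamma f$, in which case $f$ is totally umbilic. This contradicts the standing umbilic-free hypothesis on $f$ and completes the proof.

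There is no real obstacle here; the only slightly subtle ingredient is the propagation from a pointwise inclusion to the global inclusion $f \le \langle p_0\rangle^\perp$, and this is handled essentially for free by the constancy statement in Proposition~\ref{prop:polymid}, after which Lemma~\ref{lem:totumb} does the rest, as suggested by the sentence introducing the corollary.
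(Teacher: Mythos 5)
Your argument is correct and is essentially the paper's own proof: vanishing of $(\sigma^{\pm},p_{0})$ at one point propagates to all of $\Sigma$ by constancy (the paper routes this through Lemma~\ref{lem:lincon}, which is itself just part~\ref{item:polytop} of Proposition~\ref{prop:polymid} with $d=1$, exactly as you use it), so $f\le\langle p_{0}\rangle^{\perp}$ globally, and Lemma~\ref{lem:totumb} then forces $f$ to be totally umbilic, contradicting the umbilic-free assumption.
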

\begin{proof}
Suppose that at a point $p\in\Sigma$, $f(p)\le \langle p_{0}\rangle^{\perp}$. Then $(\sigma^{\pm}(p),p_{0})=0$ and by Lemma~\ref{lem:lincon}, $(\sigma^{\pm},p_{0})=0$ throughout $\Sigma$. Therefore, since $\sigma^{\pm}$ span $f$, $f\le \langle p_{0}\rangle^{\perp}$. Then by Lemma~\ref{lem:totumb}, this contradicts $f$ being an umbilic-free Legendre map. 
\end{proof}

\subsection{Isothermic surfaces}
\label{subsec:isosurf}
Suppose that $\mathfrak{p}\in\mathbb{R}^{4,2}$ is a point sphere complex. Then $\langle \mathfrak{p}\rangle^{\perp}$ is a (Riemannian or Lorentzian) conformal subgeometry of $\mathbb{R}^{4,2}$. Let $\mathcal{L}^{\mathfrak{p}}$ denote the lightcone of $\langle\mathfrak{p}\rangle^{\perp}$. In~\cite{BDPP2011, BS2012, H2003, S2008}, isothermic surfaces are characterised as the surfaces $\Lambda:\Sigma\to\mathbb{P}(\mathcal{L}^{\mathfrak{p}})$ that admit a non-zero closed 1-form 
\[ \eta\in  \Omega^{1}(\Lambda\wedge \Lambda^{(1)}).\]
Let $f:\Sigma\to \mathcal{Z}$ be the Legendre lift of $\Lambda$. Then $\Lambda = f\cap\langle \mathfrak{p}\rangle^{\perp}$ and $\eta$ takes values in $f\wedge f^{\perp}$. Furthermore, the quadratic differential 
\[ q(X,Y) = tr(\sigma \to \eta(X)d_{Y}\sigma)\]
coincides with the holomorphic\footnote{That is, locally there exists a holomorphic coordinate $z$ on $\Sigma$ such that $q^{2,0} := q(\frac{\partial}{\partial z},\frac{\partial}{\partial z})dz^{2} = dz^{2}$ and $I = e^{2u}dz d\bar{z}$.} (with respect to the conformal structure induced by $\Lambda$) quadratic differential defined in~\cite{BS2012,S2008}. Thus, $q$ is non-degenerate and $f$ is an $\Omega$-surface. Furthermore, 
\[ (d+t\eta)\mathfrak{p} =0,\]
i.e., $\mathfrak{p}$ is a constant conserved quantity of $d+t\eta$. Thus, if $\tau\in\Gamma(\wedge^{2}f)$ such that the middle pencil of $f$ is given by
\[ d+t\eta^{mid} = \exp(t\tau)\cdot (d+t\eta),\]
then we have that $p(t) = \exp(t\tau)\mathfrak{p}$ is a linear conserved quantity of $d+t\eta^{mid}$. Moreover, 
$(p(t),p(t))= (\mathfrak{p},\mathfrak{p})$ is a non-zero constant. 

Conversely, suppose that $f$ is a special $\Omega$-surface of type 1 with linear conserved quantity $p$ and suppose that $(p(t),p(t))$ is a non-zero constant. If we let $\mathfrak{p}:= p(0)$, then $\mathfrak{p}$ is a point sphere complex and $\langle\mathfrak{p}\rangle^{\perp}$ defines a (Riemannian or Lorentzian) conformal geometry. By Corollaries~\ref{cor:degreedrop} and~\ref{cor:polynormdeg}, we have that one of the isothermic sphere congruences, without loss of generality $\Lambda:=s^{+}$, of $f$ takes values in $\langle\mathfrak{p}\rangle^{\perp}$. Then $\Lambda$ is an isothermic surface and 
\[ \eta^{+} \in \Omega^{1}(\Lambda\wedge \Lambda^{(1)})\]
is its associated closed 1-form. We have therefore arrived at the following theorem:

\begin{theorem}
\label{thm:omegaiso}
Special $\Omega$-surfaces of type 1 whose degree 1 polynomial conserved quantity $p$ satisfies $(p(t),p(t))$ being a non-zero constant are the isothermic surfaces of the conformal geometry defined by $\langle p(0)\rangle^{\perp}$. 
\end{theorem}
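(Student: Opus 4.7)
The plan is to establish the biconditional by moving back and forth between the isothermic certificate in $\langle\mathfrak{p}\rangle^\perp$ and the linear conserved quantity of the middle pencil.

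For the forward direction, I would start with an isothermic surface $\Lambda$ in $\langle\mathfrak{p}\rangle^\perp$ certified by a non-zero closed 1-form $\eta\in\Omega^{1}(\Lambda\wedge\Lambda^{(1)})$. Passing to the Legendre lift $f$ with $\Lambda=f\cap\langle\mathfrak{p}\rangle^\perp$, the inclusions $\Lambda\le f$ and $\Lambda^{(1)}\le f^{(1)}=f^\perp$ put $\eta$ into $\Omega^1(f\wedge f^\perp)$, and its associated quadratic differential coincides with the classical non-degenerate holomorphic quadratic differential of $\Lambda$, so $f$ is an $\Omega$-surface. Since $\mathfrak{p}$ is constant and $\eta$ takes values in $\wedge^2\langle\mathfrak{p}\rangle^\perp$, one has $(d+t\eta)\mathfrak{p}=0$, so $\mathfrak{p}$ is a constant conserved quantity of $\{d+t\eta\}$. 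Let $\tau\in\Gamma(\wedge^2 f)$ be the gauge taking $\eta$ to $\eta^{mid}$; Lemma~\ref{lem:polygauge} then makes $p(t):=\exp(t\tau)\mathfrak{p}$ a polynomial conserved quantity of the middle pencil. Because $\tau f\le f$ and $f$ is isotropic, $\tau^2\mathfrak{p}=0$, so the exponential truncates to $p(t)=\mathfrak{p}+t\,\tau\mathfrak{p}$, and skew-adjointness of $\tau$ gives $(p(t),p(t))=(\mathfrak{p},\mathfrak{p})\ne 0$. Finally, $p$ has degree exactly one: $\tau\mathfrak{p}=0$ would force $(\sigma^\pm,\mathfrak{p})=0$, hence $f\le\langle\mathfrak{p}\rangle^\perp$, contradicting Lemma~\ref{lem:totumb} for umbilic-free $f$.

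For the converse direction, suppose $f$ is a special $\Omega$-surface of type 1 with linear conserved quantity $p$ and $(p(t),p(t))$ a non-zero constant. Set $\mathfrak{p}:=p(0)$: this is constant by Proposition~\ref{prop:polymid} and non-lightlike by the non-vanishing norm, hence a valid point sphere complex. Since $(p(t),p(t))$ has degree zero rather than $2d-1=1$, Corollary~\ref{cor:polynormdeg} tells us the degree of $p$ is not gauge-invariant, so by Corollary~\ref{cor:degreedrop} there exists $\tau\in\Gamma(\wedge^2 f)$ (one of $\tau^\pm$; WLOG $\tau^+$) for which $\exp(t\tau)p(t)$ collapses to the constant $\mathfrak{p}$, with $\mathfrak{p}\in\Gamma(s^+)^\perp$. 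Hence $s^+\le\langle\mathfrak{p}\rangle^\perp$, so $\Lambda:=s^+$ maps into $\mathbb{P}(\mathcal{L}^{\mathfrak{p}})$. The gauge $\tau^+$ is, up to the normalization of the Christoffel dual lifts, precisely the gauge sending $\eta^{mid}$ to the closed 1-form $\eta^+=\sigma^+\wedge d\sigma^-\in\Omega^1(\Lambda\wedge\Lambda^{(1)})$, which serves as the certifying closed 1-form exhibiting $\Lambda$ as an isothermic surface in $\langle\mathfrak{p}\rangle^\perp$.

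The main technical point in either direction is the bookkeeping across gauges. In the forward direction one must verify that $\exp(t\tau)\mathfrak{p}$ is of degree exactly one, using Lemma~\ref{lem:totumb} to rule out the degeneracy $\tau\mathfrak{p}=0$. In the converse one must recognize that the degree-dropping gauge $\tau^+$ supplied by Corollary~\ref{cor:degreedrop} is the same gauge that produces $\eta^+$, so that $\eta^+$ automatically lies in $\Lambda\wedge\Lambda^{(1)}$. Everything else is a reassembly of Corollaries~\ref{cor:degreedrop} and \ref{cor:polynormdeg} with the structure of the middle pencil recorded in Section~2.
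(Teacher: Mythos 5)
Your proposal is correct and follows essentially the same route as the paper: reduce to the closed 1-form characterisation of isothermic surfaces, gauge the constant conserved quantity $\mathfrak{p}$ into the middle pencil via Lemma~\ref{lem:polygauge} for the forward direction, and use Corollaries~\ref{cor:degreedrop} and~\ref{cor:polynormdeg} to place an isothermic sphere congruence in $\langle p(0)\rangle^{\perp}$ for the converse. Your extra check that $\exp(t\tau)\mathfrak{p}$ has degree exactly one is a harmless refinement the paper leaves implicit (note only that $\tau\mathfrak{p}=0$ should first be read as saying $\mathfrak{p}$ is a constant conserved quantity of the middle pencil, whence $\eta^{mid}\mathfrak{p}=0$ forces $(\sigma^{\pm},\mathfrak{p})=0$ and Lemma~\ref{lem:totumb} applies).
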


We shall now see how the classical transformations of isothermic surfaces are induced by the transformations of Subsection~\ref{subsec:trafos}: suppose that $f$ is an umbilic-free $\Omega$-surface such that $\Lambda:=s^{+}$ is an isothermic surface in $\langle \mathfrak{p}\rangle^{\perp}$. Then
\[ p(t):=\exp(t\tau)\mathfrak{p}\]
is a polynomial conserved quantity of the middle pencil, where $\tau = \frac{1}{2}\sigma^{+}\wedge\sigma^{-}$ for Christoffel dual lifts $\sigma^{\pm}$.

\subsubsection{Calapso transforms}
Suppose that $f^{t}  =T(t)f$ is a Calapso transform of $f$. Since the Calapso transforms are well defined on gauge orbits (see, Section~\ref{subsubsec:caltrafos}), we may assume that $T(t)$ is the gauge transformation of $d+t\eta^{+}$. Now 
\[ d(T(t)\mathfrak{p}) = T(t)(d+t\eta^{+})\mathfrak{p} = 0.\]
Thus, $T(t)\mathfrak{p}$ is constant and, by premultiplying by an appropriate Lie sphere transformation, we may assume that it is $\mathfrak{p}$. Then $T(t)\Lambda\le f^{t}$ is a Calapso transform of the isothermic surface $\Lambda$ in the sense of \cite{BS2012,H2003, S2008}.

\begin{theorem}
The Calapso transforms of $f$ are the Legendre lifts of the Calapso transforms of $\Lambda$. 
\end{theorem}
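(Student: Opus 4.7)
The plan is to exploit the freedom in choosing a representative of the gauge orbit: since Calapso transforms are well-defined on gauge orbits (Section~\ref{subsubsec:caltrafos}), I may take $T(t)$ to be the local orthogonal trivialising gauge transformation of $d+t\eta^{+}$ rather than $d+t\eta^{mid}$, where $\eta^{+}=\sigma^{+}\wedge d\sigma^{-}$ is the Christoffel-type potential associated to the isothermic congruence $\Lambda=s^{+}$. This is the natural gauge potential from the point of view of the conformal subgeometry $\langle\mathfrak{p}\rangle^{\perp}$, because $\eta^{+}\in\Omega^{1}(\Lambda\wedge\Lambda^{(1)})$ and $\mathfrak{p}$ is a constant conserved quantity of $d+t\eta^{+}$.

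First I would verify that $T(t)\mathfrak{p}$ is constant by computing $d(T(t)\mathfrak{p})=T(t)(d+t\eta^{+})\mathfrak{p}=0$, which uses only that $\eta^{+}\mathfrak{p}=0$ (a consequence of $\Lambda,\Lambda^{(1)}\le\langle\mathfrak{p}\rangle^{\perp}$). Postcomposing $T(t)$ with a suitable constant Lie sphere transformation I may arrange $T(t)\mathfrak{p}=\mathfrak{p}$, so that $T(t)$ preserves the point sphere complex and hence the whole conformal subgeometry $\langle\mathfrak{p}\rangle^{\perp}$.

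Next I would identify the restriction. Since $\Lambda\le f$ and $T(t)$ is orthogonal with $T(t)\mathfrak{p}=\mathfrak{p}$, the rank one subbundle $\Lambda^{t}:=T(t)\Lambda$ satisfies
\[
\Lambda^{t}\le T(t)f\cap\langle\mathfrak{p}\rangle^{\perp}=f^{t}\cap\langle\mathfrak{p}\rangle^{\perp},
\]
with equality by rank count (Lemma~\ref{lem:totumb} ensures the intersection is rank one generically). By the characterisation of isothermic surfaces recalled in Subsection~\ref{subsec:isosurf}, together with the identity $T(t)\cdot(d+t\eta^{+})=d$, the transform $\Lambda^{t}$ is exactly the Calapso transform of $\Lambda$ in the sense of~\cite{BS2012,H2003,S2008}: it is the image of $\Lambda$ under the trivialising gauge of the isothermic pencil $\{d+t\eta^{+}\}$.

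Finally, to conclude that $f^{t}$ is the Legendre lift of $\Lambda^{t}$, I would observe that Legendre lifts are characterised by being a rank two isotropic subbundle of $\underline{\mathbb{R}}^{4,2}$ intersecting $\langle\mathfrak{p}\rangle^{\perp}$ in the given conformal surface and satisfying the contact and immersion conditions (Remark~\ref{rem:legder}). All of these properties are preserved by the orthogonal transformation $T(t)$: since $f$ is the Legendre lift of $\Lambda$, applying $T(t)$ produces a rank two isotropic bundle $f^{t}$ whose intersection with $\langle\mathfrak{p}\rangle^{\perp}$ is $\Lambda^{t}$, and whose derived bundle still coincides with its orthogonal complement. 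Uniqueness of the Legendre lift then gives the claim. The only mild subtlety, which I expect to be the main point to check carefully, is that the isothermic-theoretic Calapso transform of $\Lambda$ and the Lie-theoretic Calapso transform of $f$ use compatible families of flat connections; this is precisely what the choice of gauge $\eta^{+}$ guarantees.
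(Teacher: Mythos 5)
Your proposal is correct and follows essentially the same route as the paper: gauge to the potential $\eta^{+}$ associated with $\Lambda=s^{+}$, observe $d(T(t)\mathfrak{p})=T(t)(d+t\eta^{+})\mathfrak{p}=0$ so that $T(t)\mathfrak{p}$ is constant and may be normalised to $\mathfrak{p}$ by a constant Lie sphere transformation, and then identify $T(t)\Lambda\le f^{t}$ as the isothermic Calapso transform. Your extra verification that $f^{t}$ is precisely the Legendre lift of $T(t)\Lambda$ is a harmless elaboration of what the paper leaves implicit.
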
 

\subsubsection{Darboux transforms}
Suppose that $\hat{f}$ is an umbilic-free Darboux transform of $f$ with parameter $m\in\mathbb{R}^{\times}$. Assume that a parallel section $\hat{\sigma}\le \hat{f}$ of $d+m\eta^{mid}$ satisfies $\hat{\sigma}\in\Gamma \langle p(m)\rangle^{\perp}$. Then $\hat{\sigma}^{+}:=\exp(m\tau)\hat{\sigma}$ is a parallel section of $d+m\eta^{+}$ and 
\[  (\hat{\sigma}^{+},\mathfrak{p}) = ( \exp(m\tau)\hat{\sigma}, \exp(m\tau)p(m)) = (\hat{\sigma},p(m))=0.\]
Thus, $\hat{s}^{+}:=\langle \hat{\sigma}^{+}\rangle$ is a Darboux transform in the sense of \cite{H2003, S2008, BS2012, BDPP2011} of the isothermic surface $s^{+}$. 

Conversely, if $\hat{\Lambda}$ is a Darboux transform of $\Lambda$ with parameter $m$ then $\hat{s}:= \exp(-m\tau)\hat{\Lambda}$ is a parallel subbundle of $d+m\eta^{mid}$ and $\hat{s}\le \langle p(m)\rangle^{\perp}$, since $p(m) = \exp(-m\tau)\mathfrak{p}$. 
We have therefore arrived at the following theorem:

\begin{theorem}
\label{thm:darbisosurf}
The Darboux transforms of an isothermic surface constitute a 4-parameter family of Darboux transforms of its Legendre lift. 
\end{theorem}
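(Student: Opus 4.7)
My plan is to combine the correspondence that the two paragraphs preceding the theorem have essentially already exhibited with the dimension count in Theorem~\ref{thm:darbtyped}. Those paragraphs show: $(i)$ if $\hat{f}$ is an umbilic-free Darboux transform of $f$ with parameter $m$ whose parallel subbundle $\hat{s}\le\hat{f}$ satisfies $\hat{s}\le\langle p(m)\rangle^{\perp}$, then $\hat{s}^{+}:=\exp(m\tau)\hat{s}$ lies in $\langle\mathfrak{p}\rangle^{\perp}$ and is a parallel subbundle of $d+m\eta^{+}$, hence defines a classical Darboux transform of $\Lambda$; and $(ii)$ conversely, given a Darboux transform $\hat{\Lambda}$ of $\Lambda$ with parameter $m$, the subbundle $\exp(-m\tau)\hat{\Lambda}$ is parallel for $d+m\eta^{mid}$ and orthogonal to $p(m)$. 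The first step of the proof is to observe that these two assignments are inverse to one another, yielding a bijection between the Darboux transforms of $\Lambda$ and those Darboux transforms of $f$ that preserve the linear conserved quantity.

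The second step is the dimension count. This follows immediately from Theorem~\ref{thm:darbtyped} applied to $d=1$: the Darboux transforms of $f$ with $p(m)\in\Gamma\hat{s}^{\perp}$ form a 4-parameter subfamily of the 5-parameter family provided by Theorem~\ref{thm:darbnumb}, and the bijection of Step~1 transports this count to the Darboux transforms of $\Lambda$. As a consistency check, one may count directly in the isothermic gauge: for fixed $m\in\mathbb{R}^{\times}$, Lemma~\ref{lem:darbcons} parametrises parallel null subbundles of $d+m\eta^{+}$ by $\mathbb{P}(\mathcal{L})$, and because $\eta^{+}\in\Omega^{1}(\Lambda\wedge\Lambda^{(1)})$ with $\Lambda,\Lambda^{(1)}\le\langle\mathfrak{p}\rangle^{\perp}$, $\mathfrak{p}$ is a (constant) conserved quantity of $d+t\eta^{+}$; the requirement $\hat{s}^{+}\le\langle\mathfrak{p}\rangle^{\perp}$ then restricts the constant vector $\hat{L}\in\mathbb{P}(\mathcal{L})$ to $\mathbb{P}(\mathcal{L}\cap\langle\mathfrak{p}\rangle^{\perp})$, a $3$-dimensional projective lightcone, to which the 1-dimensional freedom in $m$ is adjoined, giving $1+3=4$ parameters.

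The only mild obstacle is the bookkeeping required to translate between the isothermic gauge $\eta^{+}$ (in which the Darboux condition for $\Lambda$ is most naturally phrased) and the middle potential $\eta^{mid}$ (in which the Lie-applicable Darboux correspondence of Section~\ref{subsubsec:darbtrafo} is organised). This translation is entirely mechanical once one invokes Lemma~\ref{lem:flatgorb} together with the isometry property of $\exp(m\tau)\in\mathrm{O}(4,2)$, which renders the identity $(\hat{s}^{+},\mathfrak{p})=(\hat{s},p(m))$ transparent and completes the identification.
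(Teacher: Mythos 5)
Your argument is correct and is essentially the paper's own proof: the two assignments you describe are precisely the forward and converse constructions given in the paragraphs preceding the theorem, and the $4$-parameter count is exactly the one established for Theorem~\ref{thm:darbtyped} via Lemma~\ref{lem:darbcons} ($3$ parameters for each fixed $m$, plus the freedom in $m$). In your consistency check the orthogonality condition on the constant line is $\hat{L}\perp T(m)\mathfrak{p}$ rather than $\hat{L}\perp\mathfrak{p}$ unless one first normalises $T(m)\mathfrak{p}=\mathfrak{p}$ by a Lie sphere transformation, but since $T(m)\mathfrak{p}$ is a constant non-null vector the dimension count is unaffected.
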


\subsubsection{The Christoffel transformation}
Now suppose that $\mathfrak{p}$ satisfies $|\mathfrak{p}|^{2}=-1$ and let $\mathfrak{q}_{\infty}\in\langle\mathfrak{p}\rangle^{\perp}$ be a null space form vector. Then $\mathfrak{Q}^{3}$ is isometric to a Euclidean geometry. Let $\mathfrak{f}:\Sigma\to \mathfrak{Q}^{3}$ denote the corresponding space form projection of $f$ and let $x:\Sigma\to\mathbb{R}^{3}$ be a corresponding surface in Euclidean space. Then 
\[ \eta^{+} = \mathfrak{f}\wedge d\mathfrak{f}\circ A\] 
for some $A\in\Gamma End(T\Sigma)$ and $(\eta^{+}\mathfrak{p},\mathfrak{q}_{\infty})=0$. By comparing this with Subsection~\ref{subsec:asssurf}, we have that there is an associate surface $x^{D}$ of $x$ such that 
\[ \frac{1}{\kappa_{1}\kappa_{2}^{D}} + \frac{1}{\kappa_{2}\kappa_{1}^{D}} =0.\]
One can then deduce that the conformal structures induced by $x$ and $x^{D}$ are equivalent. Therefore, since $x$ and $x^{D}$ have parallel curvature directions and induce the same conformal structure, they are Christoffel transforms of each other. 

\subsection{Guichard surfaces}
In this subsection we shall characterise Guichard surfaces in conformal geometries amongst special $\Omega$-surfaces of type 1. We will then see how the well known transformations of these surfaces are induced by the transformations of the underlying $\Omega$-surface. This exposition has been partly outlined in~\cite[Section 7.5.2]{C2012i}. 

Suppose that $\mathfrak{p}\in \mathbb{R}^{4,2}$ is a point sphere complex for a conformal geometry $\langle \mathfrak{p}\rangle^{\perp}$. Let $\mathcal{L}^{\mathfrak{p}}$ denote the lightcone of $\langle \mathfrak{p}\rangle^{\perp}$. Recall from Section~\ref{subsubsec:conf} that spheres in this conformal geometry are represented by $(3,1)$-planes $V\le \langle \mathfrak{p}\rangle^{\perp}$. Therefore, given a two-dimensional manifold $\Sigma$, one can represent a sphere congruence as a rank 4 subbundle $V$ of the bundle $\Sigma\times \langle \mathfrak{p}\rangle^{\perp}$ with induced signature $(3,1)$. One may then split the trivial connection $d$ on $\underline{\mathbb{R}}^{4,2}=V\oplus V^{\perp}$ as
\[ d = \mathcal{D}^{V}+\mathcal{N}^{V},\]
where $\mathcal{D}^{V}$ is the sum of the induced connections on $V$ and $V^{\perp}$ and $\mathcal{N}^{V}\in \Omega^{1}(V\wedge V^{\perp})$. Let $s$ and $\tilde{s}$ denote the null rank $1$ subbundles of $V^{\perp}$. We may then write 
\[ \mathcal{N}^{V} = \mathcal{N}^{s} + \mathcal{N}^{\tilde{s}}\]
where $\mathcal{N}^{s}\in \Omega^{1}(\tilde{s}\wedge V)$ and $\mathcal{N}^{\tilde{s}}\in \Omega^{1}(s\wedge V)$. We say that a map $\Lambda:\Sigma\to \mathbb{P}(\mathcal{L}^{\mathfrak{p}})$ \textit{envelops} $V$ if $\Lambda^{(1)}\subset V$, equivalently, $\mathcal{N}^{V}\Lambda=0$. One then has that $f:=\Lambda\oplus s$ and $\tilde{f}:=\Lambda \oplus \tilde{s}$ are the Legendre maps enveloping $\Lambda$ with opposite orientations. 

In~\cite{BCwip}, M\"obius flat submanifolds are derived and studied. In codimension 1, these coincide with Guichard surfaces:

\begin{definition}[{\cite{BCwip}}]
\label{def:mflat}
$\Lambda$ is a Guichard surface if, for some (and in fact, any) enveloped sphere congruence $V$, there exists $\chi^{V}\in \Omega^{1}(\Lambda\wedge \Lambda^{(1)})$ such that
\[ d^{V}_{t}:= \mathcal{D}^{V} + t\mathcal{N}^{V}+ (t^{2}-1)\chi^{V},\]
is flat for all $t\in \mathbb{R}$. 
\end{definition}

Associated to a Guichard surface is a quadratic differential $q^{\Lambda}\in \Gamma S^{2}(T\Sigma)^{*}$ defined by
\[ q^{\Lambda}(X,Y) = 2\text{tr}(\Lambda\to \Lambda, \sigma\mapsto \chi^{V}_{X} \mathcal{D}_{Y}^{V}\sigma) + \text{tr}(\mathcal{N}_{X}^{V}\circ \mathcal{N}_{Y}^{V}|_{V^{\perp}}). \]
It is shown in~\cite{BCwip} that this quadratic differential is independent of the choice of $V$. Furthermore, if $q^{\Lambda}$ is a degenerate quadratic differential, then $\Lambda$ is a channel surface. 

So now let us suppose that $\Lambda$ is a Guichard surface with non-degenerate $q^{\Lambda}$, i.e., $\Lambda$ is a non-channel Guichard surface. Let $V$ be an enveloped sphere congruence of $\Lambda$. Consider now
\[ \Gamma^{s}_{\tilde{s}}(u) = \left\{\begin{array}{ll} u & \text{on $s$,}\\
id & \text{on $V^{\perp}$,}\\ 
u^{-1} & \text{on $\tilde{s}$.}\end{array}
\right. \]
Then 
\begin{align*}
\Gamma^{s}_{\tilde{s}}(t)\cdot d^{V}_{t} &= \Gamma^{s}_{\tilde{s}}(t)\cdot (\mathcal{D}^{V} + t\mathcal{N}^{s} + t \mathcal{N}^{\tilde{s}} + (t^{2}-1)\chi^{V})\\
&= \mathcal{D}^{V} + \mathcal{N}^{s} + t^{2}\mathcal{N}^{\tilde{s}} + (t^{2}-1)\chi^{V}\\
&=  \mathcal{D}^{V} + \mathcal{N}^{s} + \mathcal{N}^{\tilde{s}} + (t^{2}-1) (\chi^{V}+ \mathcal{N}^{\tilde{s}})\\
&= d+\tfrac{t^{2}-1}{2}\eta,
\end{align*}
where $\eta:=2(\chi^{V}+ \mathcal{N}^{\tilde{s}})\in \Omega^{1}(f\wedge f^{\perp})$. Since $d^{V}_{t}$ is flat, one has that $d+ \frac{t^{2}-1}{2}\eta$ is flat and thus $\eta$ is closed. Furthermore, one deduces that the quadratic differential $q$ of $\eta$, i.e., 
\[ q(X,Y) = \text{tr}(\sigma\mapsto \eta_{X}d_{Y}\sigma),\]
coincides with $q^{\Lambda}$. Thus, $q$ is non-degenerate and $f$ is an $\Omega$-surface. Now $\mathfrak{p}$ is a constant conserved quantity of $d^{V}_{t}$, thus $t\Gamma^{s}_{\tilde{s}}(t)\mathfrak{p}$ is a conserved quantity of $d+\frac{t^{2}-1}{2}\eta$. Moreover, one may write
\[ t\Gamma^{s}_{\tilde{s}}(t)\mathfrak{p} = \mathfrak{p}+\tfrac{t^{2}-1}{2}\sigma  \]
for some $\sigma\in \Gamma s$. By reparameterising, one has that $p(t):= \mathfrak{p}+t\sigma$ is a linear conserved quantity of $d+t\eta$. Furthermore, $(p(t),p(t))$ is a linear polynomial with non-zero constant term.  

Conversely, suppose that $f$ is an $\Omega$-surface whose middle pencil admits a linear conserved quantity $p$ such that $(p(t),p(t))$ is a linear polynomial with non-zero constant term. Thus, $\mathfrak{p}:=p(0)$ defines a conformal geometry and we denote by $\Lambda:=f\cap \langle \mathfrak{p}\rangle^{\perp}$ the corresponding projection. $V:=\langle p(0), p(1)\rangle^{\perp}$ defines an enveloping sphere congruence of $\Lambda$ and we may write $\eta^{mid}= \eta_{\Lambda}+\eta_{s}$, where $\eta_{\Lambda}\in \Omega^{1}(\Lambda\wedge V)$ and $\eta_{s}\in \Omega^{1}(s\wedge V)$ for $s:=\langle p_{1}\rangle$. Since $(p(t),p(t))$ is a linear polynomial, there exists $t_{0}\in\mathbb{R}$ such that $\tilde{s}:=\langle p(t_{0})\rangle\le V^{\perp}$ is null. Hence, $s$ and $\tilde{s}$ are the null rank 1 subbundles of $V^{\perp}$. Now, 
\[ 0 = (d+t_{0}\eta^{mid}) p(t_{0}) =  \mathcal{N}^{\tilde{s}} p(t_{0}) + t_{0}\eta_{s}p(t_{0}).\]
Hence, $ \mathcal{N}^{\tilde{s}} = -t_{0}\eta_{s}$. Without loss of generality, let us assume that $t_{0}=-\frac{1}{2}$, so that $\eta_{s} = 2 \mathcal{N}^{\tilde{s}}$. Then by defining $\chi^{V}:= \frac{1}{2}\eta_{\Lambda}$, we have that 
\[ \eta^{mid} = 2(\chi^{V} +  \mathcal{N}^{\tilde{s}}).\]
One then deduces that
\[ d^{V}_{t} = \mathcal{D}^{V} + t\mathcal{N}^{V} + (t^{2}-1)\chi^{V}=\Gamma^{s}_{\tilde{s}}(t^{-1})\cdot (d + \tfrac{t^{2}-1}{2}\eta^{mid}).\]
Since $d+\frac{t^{2}-1}{2}\eta^{mid}$ is flat for all $t$, $d^{V}_{t}$ is flat for all $t$. Hence, $\Lambda$ is M\"obius flat. Moreover, $q^{\Lambda}$ coincides with $q$. Thus, we have proved the following theorem: 

\begin{theorem}
\label{thm:guichard}
Special $\Omega$-surfaces of type 1 whose degree 1 polynomial conserved quantity $p$ satisfies $(p(t),p(t))$ being linear with non-zero constant term are the non-channel Guichard surfaces in the conformal geometry of $\langle p(0)\rangle^{\perp}$. 
\end{theorem}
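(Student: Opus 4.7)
The plan is to prove the biconditional by constructing explicitly the intertwining between the Guichard flat family $d^{V}_{t}$ of Definition~\ref{def:mflat} and the middle pencil $d+t\eta^{mid}$, using conjugation by the one-parameter group $\Gamma^{s}_{\tilde{s}}(t)$ attached to the two null lines $s,\tilde{s}\le V^{\perp}$. The computation already carried out in the paragraphs preceding the theorem statement essentially serves as the proof; I would simply package it as two implications and verify the ancillary conditions.

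For the forward direction, given a non-channel Guichard surface $\Lambda$ in $\langle\mathfrak{p}\rangle^{\perp}$ with enveloped sphere congruence $V$, I would fix a null rank-$1$ subbundle $s\le V^{\perp}$, form the Legendre lift $f:=\Lambda\oplus s$, let $\tilde s$ denote the other null line, and expand $\mathcal N^{V}=\mathcal N^{s}+\mathcal N^{\tilde s}$. A direct computation of $\Gamma^{s}_{\tilde s}(t)\cdot d^{V}_{t}$ collects the $t$-dependence into a single term $\tfrac{t^{2}-1}{2}\eta$, with $\eta:=2(\chi^{V}+\mathcal N^{\tilde s})\in\Omega^{1}(f\wedge f^{\perp})$; flatness of $d^V_t$ forces closure of $\eta$, and tracing through the definitions shows that the quadratic differential of $\eta$ equals $q^{\Lambda}$, non-degenerate by the non-channel hypothesis, so $f$ is an $\Omega$-surface. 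The constant sphere complex $\mathfrak p$, which is a parallel section of every $d^{V}_{t}$, transports to $t\,\Gamma^{s}_{\tilde s}(t)\mathfrak p=\mathfrak p+\tfrac{t^{2}-1}{2}\sigma$ for some $\sigma\in\Gamma s$; reparametrising and passing to the middle potential via Lemma~\ref{lem:polygauge} gives a linear conserved quantity $p(t)=\mathfrak p+t\sigma$ of the middle pencil whose squared norm is linear with non-zero constant term $|\mathfrak p|^{2}$.

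For the reverse direction, given such a linear conserved quantity $p(t)=\mathfrak p+t\sigma$, set $\Lambda:=f\cap\langle\mathfrak p\rangle^{\perp}$, which is a rank-$1$ subbundle by Corollary~\ref{cor:legperp}, and take $V:=\langle p(0),p(1)\rangle^{\perp}$; the signature of $V$ comes out to $(3,1)$ and $\Lambda\le V$, so $V$ is an enveloping sphere congruence of $\Lambda$. With $s:=\langle p_{1}\rangle$, linearity of $(p(t),p(t))$ furnishes a parameter $t_{0}$ with $\tilde s:=\langle p(t_{0})\rangle$ the second null line of $V^{\perp}$. Splitting $\eta^{mid}=\eta_{\Lambda}+\eta_{s}$ along $\Omega^{1}(\Lambda\wedge V)\oplus\Omega^{1}(s\wedge V)$, the parallelism identity $(d+t_{0}\eta^{mid})p(t_{0})=0$ pins down $\mathcal N^{\tilde s}=-t_{0}\eta_{s}$. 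After rescaling $\sigma$ so that $t_{0}=-\tfrac12$ and setting $\chi^{V}:=\tfrac12\eta_{\Lambda}$, one obtains $\eta^{mid}=2(\chi^{V}+\mathcal N^{\tilde s})$, and the inverse conjugation $\Gamma^{s}_{\tilde s}(t^{-1})\cdot(d+\tfrac{t^{2}-1}{2}\eta^{mid})=d^{V}_{t}$ is flat for all $t$, exhibiting $\Lambda$ as Guichard; the equality $q^{\Lambda}=q$ makes it non-channel.

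The main obstacle is the bookkeeping: verifying that $\chi^{V}$ genuinely takes values in $\Lambda\wedge\Lambda^{(1)}$ as required by Definition~\ref{def:mflat} (which needs the inclusion $\Lambda^{(1)}\le V$ and that $\eta_{\Lambda}$ factors through $\Lambda^{(1)}$), and the careful comparison of the two trace-defined quadratic differentials in a common basis to conclude $q=q^{\Lambda}$. Once these two verifications are in hand, the conjugation by $\Gamma^{s}_{\tilde s}$ delivers the entire correspondence by a one-line computation in each direction.
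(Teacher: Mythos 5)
Your proposal is correct and follows essentially the same route as the paper: the paper's own proof is exactly the computation you describe, conjugating $d^{V}_{t}$ by $\Gamma^{s}_{\tilde{s}}(t)$ to produce $\eta=2(\chi^{V}+\mathcal{N}^{\tilde{s}})$ and the conserved quantity $\mathfrak{p}+\tfrac{t^{2}-1}{2}\sigma$ in one direction, and in the other taking $V=\langle p(0),p(1)\rangle^{\perp}$, normalising $t_{0}=-\tfrac{1}{2}$, and reversing the gauge to recover $d^{V}_{t}$. The two verifications you flag (that $V$ envelops $\Lambda$, i.e.\ $\Lambda^{(1)}\le V$, and that $q=q^{\Lambda}$) are likewise asserted with minimal detail in the paper, so your packaging matches its level of rigour.
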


\subsubsection{Calapso transforms}
Let $f^{t}:=T(t)f$ be a Calapso transform of $f$. Then by Proposition~\ref{prop:calpcq}, the middle pencil of $f^{t}$ admits a linear conserved quantity $p^{t}$ defined by 
\[p^{t}(s)=T(t)p(t+s).\]
Now since $T(t)$ take values in $\textrm{O}(4,2)$, we have that
\[(p^{t}(s),p^{t}(s))= (p(t+s),p(t+s)).\]
Therefore, $(p^{t}(s),p^{t}(s))$ is a linear polynomial with constant term $(p(t),p(t))$. Since $(p(t),p(t))$ is a linear polynomial in $t$ with non-zero constant term, it admits a single root which we shall denote $t_{0}$. By applying Theorem~\ref{thm:guichard}, we obtain the following theorem:

\begin{theorem}
If $t\neq t_{0}$, then the Calapso transform $f^{t}$ projects to a Guichard surface in the conformal geometry of $\langle T(t)p(t)\rangle^{\perp}$.  
\end{theorem}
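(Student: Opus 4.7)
The plan is to verify, for the Calapso transform $f^{t}$, the hypotheses of Theorem~\ref{thm:guichard} and read off the conclusion. Proposition~\ref{prop:calpcq} furnishes a linear conserved quantity $p^{t}(s) = T(t)p(s+t)$ of the middle pencil of $f^{t}$, with constant term $p^{t}(0) = T(t)p(t)$. Because $T(t)$ takes values in $\mathrm{O}(4,2)$,
\[
(p^{t}(s),p^{t}(s)) = (p(s+t),p(s+t)),
\]
which is a linear polynomial in $s$ whose constant term is $(p(t),p(t))$. Since $(p(\cdot),p(\cdot))$ is a (non-trivial) linear polynomial in its argument with non-zero constant term, its sole root is $t_{0}$; hence the hypothesis $t \neq t_{0}$ is precisely what ensures $(p(t),p(t)) \neq 0$, i.e.\ that the linear polynomial $(p^{t}(s),p^{t}(s))$ has non-zero constant term.

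With this, Theorem~\ref{thm:guichard} applies to $f^{t}$ equipped with the linear conserved quantity $p^{t}$, yielding that $f^{t}$ projects to a (non-channel) Guichard surface in the conformal geometry of $\langle p^{t}(0)\rangle^{\perp} = \langle T(t)p(t)\rangle^{\perp}$, which is the assertion. The only genuine obstacle I foresee is the implicit umbilic-free hypothesis built into the definition of ``special $\Omega$-surface of type $1$'': Theorem~\ref{thm:guichard} requires $f^{t}$ to be umbilic-free. This should follow because $T(t)$ is an orthogonal gauge transformation, so that the two curvature sphere congruences of $f$ are transported to distinct rank-$1$ subbundles of $f^{t}$; alternatively, one may invoke Theorem~\ref{thm:calform} together with the fact $q^{t}=q$, which guarantees that $f^{t}$ inherits the same (non-degenerate) conformal structure with null curvature directions, thereby ruling out umbilic points on the common dense open set where $f$ is umbilic-free.
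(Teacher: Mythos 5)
Your argument is correct and is essentially the paper's own proof: invoke Proposition~\ref{prop:calpcq} for the conserved quantity $p^{t}(s)=T(t)p(t+s)$, use orthogonality of $T(t)$ to see $(p^{t}(s),p^{t}(s))=(p(t+s),p(t+s))$ is linear with constant term $(p(t),p(t))\neq 0$ precisely when $t\neq t_{0}$, and then apply Theorem~\ref{thm:guichard}. Your extra remark verifying that $f^{t}$ is umbilic-free (via orthogonality of $T(t)$, or via Theorem~\ref{thm:calform} and $q^{t}=q$) is a point the paper leaves implicit, and it is handled correctly.
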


\begin{remark}
\label{rem:guichroot}
The Calapso transform $f^{t_{0}}$ admits a linear polynomial $p^{t_{0}}$ such that $(p^{t_{0}}(s),p^{t_{0}}(s))$ is a linear polynomial with vanishing constant term. Therefore Theorem~\ref{thm:guichard} does not apply in this case. 
\end{remark}

In~\cite{BCwip} a spectral deformation is defined for Guichard surfaces $\Lambda$. Given an enveloping sphere congruence $V$, the 1-parameter family of connections $d^{V}_{t}$ is flat for all $t$. Thus there exist trivialising gauge transformations $\Phi^{V}_{t}$ such that $\Phi^{V}_{t}\cdot d^{V}_{t}=d$. For $r\in \mathbb{R}$ we then say that $\Lambda_{r}:=\Phi^{V}_{r} \Lambda$ is a \textit{T-transform} of $\Lambda$. On the other hand, we have that 
\[ d^{V}_{t}= \Gamma^{s}_{\tilde{s}}(t^{-1})\cdot (d+\tfrac{t^{2}-1}{2}\eta).\]
It follows that $\Phi^{V}_{t} = T(\frac{t^{2}-1}{2})\Gamma^{s}_{\tilde{s}}(t)$. Thus, $\Phi^{V}_{r}\Lambda = T(\frac{r^{2}-1}{2})\Lambda$. Hence the Calapso transforms of the underlying $\Omega$-surface coincide with the T-transforms of $\Lambda$.

\subsubsection{The Eisenhart transformation}
In~\cite{E1914}, Eisenhart determines a B\"acklund type transformation for Guichard surfaces, which has come to be known as the \textit{Eisenhart transformation}. A conformally invariant description of this transformation is given in~\cite{BCwip}, and we will now show how this is induced by certain Darboux transforms of the underlying $\Omega$-surfaces. 

Firstly let us assume that $f$ is a special $\Omega$-surface whose linear conserved quantity $p$ of the middle pencil satisfies $(p(t),p(t))$ being a linear polynomial with non-zero constant term. Let $\Lambda:= f\cap \langle p(0)\rangle^{\perp}$. Then by Theorem~\ref{thm:guichard}, $\Lambda$ is a Guichard surface. 

Suppose that $\hat{\Lambda}$ is an Eisenhart transform of $\Lambda$ with parameter $m$. Then, by~\cite{BCwip}, there exists a sphere congruence $V_{r}$ enveloping $\Lambda$ and $\hat{\Lambda}$ such that $\hat{\Lambda}$ is a parallel subbundle of $d^{V_{r}}_{m}$. Now for the appropriate choice of gauge potential $\eta$, one has that 
\begin{equation}
\label{eqn:eisrel}
d^{V_{r}}_{t} = \Gamma^{s}_{\tilde{s}}(t^{-1})\cdot (d+\tfrac{t^{2}-1}{2}\eta),
\end{equation}
where $s$ and $\tilde{s}$ are the rank 1 null subbundles of $V_{r}^{\perp}$. Since $\hat{\Lambda}$ is a parallel subbundle  $d^{V_{r}}_{m}$, one has that $\Gamma^{s}_{\tilde{s}}(m^{-1})\hat{\Lambda}$ is a parallel subbundle of $d+\tfrac{m^{2}-1}{2}\eta$. Moreover, since $\hat{\Lambda}$ is enveloped by $V_{r}$, $\hat{\Lambda}\le V_{r}$ and thus $\Gamma^{s}_{\tilde{s}}(m^{-1})\hat{\Lambda}=\hat{\Lambda}$. Now by defining $\hat{f}:= s_{0}\oplus \hat{\Lambda}$, where $s_{0}:= f\cap \hat{\Lambda}^{\perp}$, we have that $\hat{f}$ is a Darboux transform of $f$ with parameter $\frac{m^{2}-1}{2}$. Furthermore, 
\[ \hat{\Lambda}\le V_{r} = (\langle p(0)\rangle \oplus s)^{\perp} = \langle \tilde{p}(0), \tilde{p}(1)\rangle^{\perp},\]
where $\tilde{p}$ is the linear conserved quantity of $\eta$. Thus, $\hat{\Lambda}\perp \tilde{p}(\frac{m^{2}-1}{2})$. Now when we let $\hat{s}\le \hat{f}$ denote the parallel subbundle of $d+\frac{m^{2}-1}{2}\eta^{mid}$, this condition is equivalent to $\hat{s}\perp p(\frac{m^{2}-1}{2})$.

Conversely, suppose that $\hat{f}$ is a Darboux transform of $f$ with parameter $\frac{m^{2}-1}{2}$ such that the parallel subbundle $\hat{s}\le \hat{f}$ of $d+\frac{m^{2}-1}{2}\eta^{mid}$ satisfies $\hat{s}\perp p(\frac{m^{2}-1}{2})$. Then by Proposition~\ref{prop:darbfree} there exists a gauge potential $\eta$ in the gauge orbit of $\eta^{mid}$ for which $\hat{\Lambda}=\hat{f}\cap \langle p(0)\rangle^{\perp}$ is a parallel subbundle. Let $\tilde{p}$ be the corresponding linear conserved quantity of $d+t\eta$. Then one has that $\hat{\Lambda} \perp \tilde{p}(\frac{m^{2}-1}{2})$. Let $V:= \langle \tilde{p}(0),\tilde{p}(1)\rangle^{\perp}$. Then $V$ envelopes $\Lambda$ and $\hat{\Lambda}$. Furthermore, $\hat{\Lambda}$ is a parallel subbundle of $d^{V}_{m}$, where
\begin{equation*}
d^{V}_{t} = \Gamma^{s}_{\tilde{s}}(t^{-1})\cdot (d+\tfrac{t^{2}-1}{2}\eta),
\end{equation*}
and $s$ and $\tilde{s}$ are the null rank $1$ subbundles of $V^{\perp}$. Therefore, by~\cite{BCwip}, $\hat{\Lambda}$ is an Eisenhart transform of $\Lambda$ with parameter $m$. We have therefore arrived at the following theorem:

\begin{theorem}
The Eisenhart transforms of a Guichard surface constitute a 4-parameter family of Darboux transforms of its Legendre lift. 
\end{theorem}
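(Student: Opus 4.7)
The plan is to establish the theorem by combining the equivalence already sketched in the paragraphs immediately preceding the statement with the counting result from Theorem~\ref{thm:darbtyped}. The core observation is the gauge identity
\[ d^{V}_{t} = \Gamma^{s}_{\tilde{s}}(t^{-1})\cdot (d+\tfrac{t^{2}-1}{2}\eta),\]
which relates the Guichard pencil of an enveloping sphere congruence $V$ to the middle pencil (up to reparameterisation $t \mapsto \frac{t^{2}-1}{2}$ and a gauge transformation $\Gamma^{s}_{\tilde{s}}(t^{-1})$ that is trivial on $V$). My plan is to use this to promote Eisenhart transforms into Darboux transforms and back, and then identify the family they form.

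For the forward direction, I would start with an Eisenhart transform $\hat{\Lambda}$ of $\Lambda$ with parameter $m$, so by the conformally invariant description in \cite{BCwip} there exists a sphere congruence $V_{r}$ enveloping both $\Lambda$ and $\hat{\Lambda}$ with $\hat{\Lambda}$ parallel for $d^{V_{r}}_{m}$. Applying the gauge identity above at parameter $m$, and using that $\hat{\Lambda} \le V_{r}$ makes $\Gamma^{s}_{\tilde{s}}(m^{-1})$ act trivially on $\hat{\Lambda}$, I get that $\hat{\Lambda}$ is parallel for $d + \tfrac{m^{2}-1}{2}\eta$. Setting $\hat{f} := (f\cap\hat{\Lambda}^{\perp}) \oplus \hat{\Lambda}$ exhibits $\hat{f}$ as a Darboux transform with parameter $\tfrac{m^{2}-1}{2}$. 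Finally, since $V_{r} = \langle \tilde{p}(0), \tilde{p}(1)\rangle^{\perp}$ for the associated linear conserved quantity $\tilde{p}$ of $\eta$, the containment $\hat{\Lambda} \le V_{r}$ translates into the perpendicularity condition $\hat{s} \perp p(\tfrac{m^{2}-1}{2})$ after gauging back to $\eta^{mid}$.

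For the converse, given a Darboux transform $\hat{f}$ of $f$ with parameter $\tfrac{m^{2}-1}{2}$ and parallel subbundle $\hat{s}\le\hat{f}$ satisfying $\hat{s}\perp p(\tfrac{m^{2}-1}{2})$, I would invoke Proposition~\ref{prop:darbfree} to select a gauge potential $\eta$ in the orbit of $\eta^{mid}$ for which $\hat{\Lambda} := \hat{f}\cap\langle p(0)\rangle^{\perp}$ is parallel for $d+\tfrac{m^{2}-1}{2}\eta$. Setting $V := \langle \tilde{p}(0), \tilde{p}(1)\rangle^{\perp}$ then gives a sphere congruence enveloping both $\Lambda$ and $\hat{\Lambda}$ (the perpendicularity guarantees $\hat{\Lambda}\le V$), and reading the gauge identity in the other direction shows that $\hat{\Lambda}$ is parallel for $d^{V}_{m}$, so $\hat{\Lambda}$ is an Eisenhart transform of $\Lambda$ by the characterisation in \cite{BCwip}.

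To conclude the parameter count, I would apply Theorem~\ref{thm:darbtyped}: for each $m\neq 0$, the Darboux transforms of $f$ with parameter $m$ whose parallel subbundle $\hat{s}$ satisfies $\hat{s}\perp p(m)$ form a 3-parameter family (equivalently, special $\Omega$-surfaces of type $1$ again), and letting $m$ vary over $\mathbb{R}^{\times}$ gives a 4-parameter family overall. The main delicate point I expect will be verifying that the perpendicularity condition is exactly what is needed --- and no more --- to drop into the Guichard geometry, i.e.\ that the gauge transformation $\Gamma^{s}_{\tilde{s}}(m^{-1})$ genuinely fixes $\hat{\Lambda}$ pointwise so that parallelism transfers faithfully in both directions; but this is automatic once one checks that $\hat{\Lambda}\le V$, which is precisely the content of the perpendicularity condition.
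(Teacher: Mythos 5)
Your proposal follows the paper's own argument essentially verbatim: the same gauge identity $d^{V}_{t} = \Gamma^{s}_{\tilde{s}}(t^{-1})\cdot(d+\tfrac{t^{2}-1}{2}\eta)$ relating the Guichard pencil to the middle pencil, the same translation of $\hat{\Lambda}\le V_{r}$ into the condition $\hat{s}\perp p(\tfrac{m^{2}-1}{2})$, the same use of Proposition~\ref{prop:darbfree} for the converse, and the same count via Theorem~\ref{thm:darbtyped} (a 3-parameter family for each fixed Darboux parameter, hence 4 parameters overall). This matches the paper's proof, so no further comment is needed.
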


\subsubsection{The associate surface}
Let $\mathfrak{p}:=p(0)$ and suppose now that $|\mathfrak{p}|^{2}=-1$. Choose a null space form vector $\mathfrak{q}_{\infty}\in\langle\mathfrak{p}\rangle^{\perp}$. Then $\mathfrak{Q}^{3}$ is isometric to Euclidean $3$-space. As usual, let $\mathfrak{f}:\Sigma\to \mathfrak{Q}^{3}$ denote the space form projection of $f$ into $\mathfrak{Q}^{3}$ and let $\mathfrak{t}:\Sigma\to \mathfrak{P}^{3}$ denote its tangent plane congruence. Now we may choose a 1-form $\tilde{\eta}$ in the gauge orbit of $\eta^{mid}$ such that the linear conserved quantity $\tilde{p}$ of $\tilde{\eta}$ satisfies $\tilde{p}_{1}\perp \mathfrak{q}_{\infty}$. After rescaling by a constant if necessary, one can deduce that $\tilde{\eta}$ has the form 
\[ \tilde{\eta}= \mathfrak{f}\wedge d\mathfrak{f}\circ A -\mathfrak{t}\wedge d\mathfrak{t}\]
for some $A\in \Gamma End(T\Sigma)$. Therefore, by comparing with Subsection~\ref{subsec:asssurf}, any projection $x:\Sigma\to \mathbb{R}^{3}$ of $\mathfrak{f}$ with unit normal $n:\Sigma\to S^{2}$ admits an associate surface $x^{D}$ such that the associate Gauss map is given by the unit normal of $x$, i.e., $\hat{x}=n$. Thus 
\[  0 = \frac{1}{\kappa_{1}\kappa^{D}_{2}} +  \frac{1}{\kappa_{2}\kappa^{D}_{1}} -  \frac{1}{\hat{\kappa}_{1}} - \frac{1}{\hat{\kappa}_{2}}=\frac{1}{\kappa_{1}\kappa^{D}_{2}} +  \frac{1}{\kappa_{2}\kappa^{D}_{1}} + 2.\]
Hence, $x^{D}$ is an associate surface in the sense of Guichard~\cite{G1900}.

\subsection{L-isothermic surfaces}
\label{subsec:lisosurf}
$L$-isothermic surfaces were originally discovered by Blaschke \cite{B1929} and have been the subject of interest recently in for example \cite{E1962,MN1996,MN1997,MN2000,MN2014,RS2009, S2009}. They are the surfaces in $\mathbb{R}^{3}$ that admit curvature line coordinates that are conformal with respect to the third fundamental form of the surface, or as Musso and Nicolodi~\cite{MN2000} put it, there exists a holomorphic\footnote{That is, locally there exists a complex coordinate $z$ on $\Sigma$ such that $q^{2,0} :=q(\frac{\partial}{\partial z},\frac{\partial}{\partial z}) dz^{2}=dz^{2}$ and $I\! I\! I = e^{2u}dz d\bar{z}$.} (with respect to the third fundamental form) quadratic differential $q$ that commutes with the second fundamental form, i.e., if we use the complex structure induced on $\Sigma$ by $I\! I\! I$ to split the second fundamental form into bidegrees,
\[ I\! I = I\! I^{2,0} + I\! I^{1,1} + I\! I^{0,2},\]
then $I\! I^{2,0} = \mu q^{2,0}$, for some real valued function $\mu:\Sigma\to\mathbb{R}$. In~\cite{MN2000}, $L$-isothermic surfaces were also characterised in terms of the standard model for Laguerre geometry $\mathbb{R}^{3,1}$ (see for example~\cite{B1929,C2008}). In this subsection we will show that Legendre lifts of $L$-isothermic surfaces are the special $\Omega$-surfaces of type 1 whose linear conserved quantity $p$ of the middle pencil satisfies $(p(t),p(t))=0$.

Recall from Subsection~\ref{subsec:laguerre} that a non-zero lightlike vector $\mathfrak{q}_{\infty}$ defines a Laguerre geometry and that by choosing $\mathfrak{q}_{0}\in\mathcal{L}$ such that $(\mathfrak{q}_{0},\mathfrak{q}_{\infty})=-1$ and $\mathfrak{p}\in \langle\mathfrak{q}_{0},\mathfrak{q}_{\infty}\rangle^{\perp}$ such that $|\mathfrak{p}|^{2}=-1$, one can show that
\[ \mathfrak{Q}^{3}=\{y\in \mathcal{L}: (y,\mathfrak{q}_{\infty})=-1,(y,\mathfrak{p})=0\}\]
 is isometric to $\mathbb{R}^{3} \cong\langle \mathfrak{q}_{\infty},\mathfrak{q}_{0},\mathfrak{p}\rangle^{\perp}$. 

Now suppose that $f:\Sigma\to \mathcal{Z}$ is a Legendre map and that $f$ projects to a surface $\mathfrak{f}:\Sigma\to\mathfrak{Q}^{3}$ with tangent plane congruence $\mathfrak{t}:\Sigma\to \mathfrak{P}^{3}$. Then 
\[\mathfrak{f} = x+\mathfrak{q}_{0} +\frac{1}{2}(x,x)\mathfrak{q}_{\infty}\quad \text{and}\quad \mathfrak{t} = n+\mathfrak{p}+(n,x)\mathfrak{q}_{\infty},\]
where $x:\Sigma\to\mathbb{R}^{3}$ is the corresponding surface in $\mathbb{R}^{3}$ with unit normal $n:\Sigma\to S^{2}$. Suppose that there exists a holomorphic (with respect to the third fundamental form of $x$, $I\! I\! I = (dn,dn)$) quadratic differential $q$ that commutes with the second fundamental form of $x$, $I\! I = -(dx,dn)$. This implies that if we let $Q\in \Gamma End(T\Sigma)$ such that 
\[ q = (dn,dn\circ Q),\]
then $Q$ is trace-free and symmetric with respect to $I\! I \! I$ and the 2-tensor
\[ (dx,dn\circ Q) \]
is symmetric. Now let
\begin{equation*}
\eta:= \mathfrak{t}\wedge d\mathfrak{t}\circ Q
= (n+\mathfrak{p}+(n,x)\mathfrak{q}_{\infty})\wedge (dn\circ Q + (dn\circ Q,x)\mathfrak{q}_{\infty}).
\end{equation*}
Then $d\eta$ is equal to
\begin{equation*}
dn\curlywedge dn\circ Q +  \mathfrak{q}_{\infty}\wedge((dn\curlywedge dn\circ Q)x) 
+  (n+\mathfrak{p}+(n,x)\mathfrak{q}_{\infty})\wedge ( d(dn\circ Q) + d(dn\circ Q, x)\mathfrak{q}_{\infty}).\end{equation*}
It follows from the fact that $Q$ is trace-free that $dn\curlywedge dn\circ Q = 0$. Furthermore, one can check that $q$ being holomorphic implies that $dn\circ Q$ is closed. Finally, for any $X,Y\in \Gamma T\Sigma$, 
\[d(dn\circ Q,x)(X,Y) = (dn\circ Q(Y), d_{X}x) - (dn\circ Q(X), d_{Y}x)=0,\]
since $(dx,dn\circ Q)$ is symmetric. Therefore, $\eta$ is closed. Moreover, 
\[ q(X,Y) = (dn,dn\circ Q) = tr(\sigma\to \eta(X)d_{Y}\sigma) \]
is non-degenerate and $\eta \mathfrak{q}_{\infty}=0$. Hence, $f$ is an $\Omega$-surface and for some $\tau\in \Gamma (\wedge^{2}f)$, $p(t):=\exp(t\tau)\mathfrak{q}_{\infty}$ is a linear conserved quantity of the middle pencil satisfying 
\[(p(t),p(t))=(\mathfrak{q}_{\infty},\mathfrak{q}_{\infty})=0.\]

Conversely, suppose that $f$ is a special $\Omega$-surface of type 1 whose linear conserved quantity $p$ of the middle pencil satisfies $(p(t),p(t))=0$. Let $\mathfrak{q}_{\infty}:= p_{0}$. Then $\mathfrak{q}_{\infty}$ is a space form vector for a space form with vanishing sectional curvature. Furthermore, by Corollaries~\ref{cor:degreedrop} and~\ref{cor:polynormdeg}, one of the isothermic sphere congruences, without loss of generality $s^{+}$, takes values in $\langle \mathfrak{q}_{\infty}\rangle^{\perp}$. Let $\mathfrak{p}\in \langle \mathfrak{q}_{\infty}\rangle^{\perp}$ be a point sphere complex with $|\mathfrak{p}|^{2}=-1$ and let $\mathfrak{t}\in\Gamma s^{+}$ be the lift of $s^{+}$ such that $(\mathfrak{t},\mathfrak{p})=-1$. Then $\mathfrak{t}$ defines a tangent plane congruence for the space form projection $\mathfrak{f}:\Sigma\to\mathfrak{Q}^{3}$ of $f$. Now $\eta^{+}$ has the form 
\[ \eta^{+} = \mathfrak{t}\wedge d\mathfrak{t}\circ Q,\]
for some $Q\in \Gamma End(T\Sigma)$. Therefore, 
\[ q(X,Y) = tr(\sigma\mapsto \eta^{+}_{X}d_{Y}\sigma)=(d\mathfrak{t},d\mathfrak{t}\circ Q),\]
and $q$ is holomorphic with respect to the conformal structure induced by $\mathfrak{t}$. Furthermore, since $\eta^{+}$ is closed, we have that
\begin{align*}
0 = (d\eta^{+}(X,Y))\mathfrak{f} &= (d\mathfrak{t}\curlywedge d\mathfrak{t}\circ Q + \mathfrak{t}\wedge d(d\mathfrak{t}\circ Q))(X,Y)\mathfrak{f}\\
&= - (d(d\mathfrak{t}\circ Q)(X,Y),\mathfrak{f}) \mathfrak{t}\\
&=( (d\mathfrak{t}\circ Q(Y),d_{X}\mathfrak{f}) -  (d\mathfrak{t}\circ Q(X),d_{Y}\mathfrak{f}))\mathfrak{t}.
\end{align*}
Thus, $q$ commutes with the second fundamental form of $\mathfrak{f}$. Hence, $\mathfrak{f}$ projects to an $L$-isothermic surface. 

We therefore have the following theorem:

\begin{theorem}
\label{thm:liso}
Special $\Omega$-surfaces of type 1 whose linear polynomial conserved quantity $p$ satisfies $(p(t),p(t))=0$ are the $L$-isothermic surfaces of any Laguerre geometry defined by $p(0)$. 
\end{theorem}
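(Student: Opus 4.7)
The plan is to prove both directions of the equivalence. Throughout, the key observation is that the condition $(p(t),p(t))=0$ forces $\mathfrak{q}_\infty:=p(0)$ to be null, so it can play the role of the improper point of a Laguerre geometry defined as in Subsection~\ref{subsec:laguerre}.

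For the forward direction, I would start from an $L$-isothermic surface $x:\Sigma\to\mathbb{R}^3$ with unit normal $n$, equipped with a holomorphic (with respect to $I\!I\!I$) quadratic differential $q$ commuting with $I\!I$. Encoding $q=(dn,dn\circ Q)$ via a trace-free, $I\!I\!I$-symmetric endomorphism $Q\in\Gamma\,End(T\Sigma)$, I would reconstruct the tangent plane congruence $\mathfrak{t}=n+\mathfrak{p}+(n,x)\mathfrak{q}_\infty$ in $\mathbb{R}^{4,2}$ and propose the candidate gauge potential
\[
\eta:=\mathfrak{t}\wedge d\mathfrak{t}\circ Q\in\Omega^1(f\wedge f^\perp).
\]
Verifying that $\eta$ is closed is the heart of this direction: expanding $d\eta$ yields a $dn\curlywedge dn\circ Q$ piece that vanishes because $Q$ is trace-free, a $d(dn\circ Q)$ piece that vanishes by holomorphicity of $q$, and a $d((dn\circ Q,x))$ piece that vanishes because $(dx,dn\circ Q)$ is symmetric, which is exactly the commutation of $q$ with $I\!I$. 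Combined with the observation that the induced quadratic differential is $q$ itself (hence non-degenerate) and that $\eta\,\mathfrak{q}_\infty=0$ by construction, this shows $f$ is an $\Omega$-surface admitting $\mathfrak{q}_\infty$ as a constant conserved quantity of $d+t\eta$. Gauging to the middle potential by some $\tau\in\Gamma(\wedge^2 f)$ using Lemma~\ref{lem:flatgorb} yields the required linear conserved quantity $p(t):=\exp(t\tau)\mathfrak{q}_\infty$, which is isotropic since $\exp(t\tau)\in\mathrm{O}(4,2)$ and $|\mathfrak{q}_\infty|^2=0$.

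For the converse, I would set $\mathfrak{q}_\infty:=p(0)$ and invoke Corollaries~\ref{cor:degreedrop} and~\ref{cor:polynormdeg} to place one of the isothermic sphere congruences -- say $s^+$ -- inside $\langle\mathfrak{q}_\infty\rangle^\perp$. Picking a point sphere complex $\mathfrak{p}\in\langle\mathfrak{q}_\infty\rangle^\perp$ with $|\mathfrak{p}|^2=-1$ and the lift $\mathfrak{t}\in\Gamma s^+$ normalised by $(\mathfrak{t},\mathfrak{p})=-1$ identifies $s^+$ with the tangent plane congruence of the Euclidean space form projection $\mathfrak{f}$. Since $\eta^+\in\Omega^1(s^+\wedge(s^+)^{(1)})$, it must take the form $\mathfrak{t}\wedge d\mathfrak{t}\circ Q$ for some $Q\in\Gamma\,End(T\Sigma)$, so $q=(d\mathfrak{t},d\mathfrak{t}\circ Q)$ is automatically holomorphic with respect to the conformal structure induced by $\mathfrak{t}$ on $\Sigma$. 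To extract commutation with $I\!I$, I would compute the $\mathfrak{f}$-component of $d\eta^+(X,Y)$, which reduces to $\bigl((d\mathfrak{t}\circ Q(Y),d_X\mathfrak{f})-(d\mathfrak{t}\circ Q(X),d_Y\mathfrak{f})\bigr)\mathfrak{t}$; vanishing of this expression (by closure of $\eta^+$) is precisely the commutation condition, so $\mathfrak{f}$ is $L$-isothermic.

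I expect the main obstacle to be the careful bookkeeping in the forward direction: establishing closure of $\eta$ requires simultaneously deploying all three defining properties of the $L$-isothermic data -- trace-freeness of $Q$, holomorphicity of $q$, and commutation with $I\!I$ -- with each property responsible for a distinct piece of $d\eta$. A secondary technical point is checking that the candidate quadratic differential is genuinely non-degenerate so that $f$ qualifies as an $\Omega$-surface rather than an $\Omega_0$-surface; this follows from the nontriviality of the original holomorphic differential $q$.
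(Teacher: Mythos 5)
Your proposal follows essentially the same route as the paper's own argument: the same candidate potential $\eta=\mathfrak{t}\wedge d\mathfrak{t}\circ Q$ with the three-way split of $d\eta$ (trace-freeness, holomorphicity, commutation with $I\!I$) in the forward direction, and the same use of Corollaries~\ref{cor:degreedrop} and~\ref{cor:polynormdeg} plus closure of $\eta^{+}$ in the converse. It is correct as it stands.
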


\subsubsection{Calapso transforms}
$L$-isothermic surfaces are well known to be the deformable surfaces of Laguerre geometry~\cite{MN1996} and this gives rise to $T$-transforms for these surfaces~\cite{MN2014}. Therefore it is unsurprising that the Calapso transforms of Legendre lifts of $L$-isothermic surfaces yield $L$-isothermic surfaces.
 
Fix $t\in\mathbb{R}$ and let $f^{t}$ be a Calapso transform of $f$. Then by Proposition~\ref{prop:calpcq}, the middle pencil of $f^{t}$ admits a linear conserved quantity $p^{t}$ defined by 
\[ p^{t}(s)= T(t)p(t+s).\]
Since $T(t)$ takes values in $\textrm{O}(4,2)$ we have that 
\[ (p^{t}(s),p^{t}(s)) = (p(t+s),p(t+s)) = 0.\]
Now $T(t)p(t)$ is a constant null vector. Therefore, by premultiplying by an appropriate Lie sphere transformation, we may assume that it is $\mathfrak{q}_{\infty}$.  By applying Theorem~\ref{thm:liso} we obtain the following theorem:

\begin{theorem}
The Calapso transforms of $L$-isothermic surfaces are $L$-isothermic. 
\end{theorem}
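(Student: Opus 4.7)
The plan is essentially a two-step application of results already established: Proposition~\ref{prop:calpcq} transports the linear conserved quantity along the Calapso deformation, and Theorem~\ref{thm:liso} then identifies the resulting surface as $L$-isothermic.

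First I would set
\[ p^{t}(s) := T(t)\, p(t+s), \]
which by Proposition~\ref{prop:calpcq} is a genuine degree $1$ polynomial conserved quantity of the middle pencil $d + s(\eta^{t})^{mid}$ of $f^{t}$. Next I would check the crucial lightlike condition: since $T(t)$ takes values in $\mathrm{O}(4,2)$, orthogonality gives
\[ (p^{t}(s),\, p^{t}(s)) = (p(t+s),\, p(t+s)) = 0, \]
the right-hand vanishing being the hypothesis that $f$ is (the Legendre lift of) an $L$-isothermic surface in the sense of Theorem~\ref{thm:liso}.

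Then I would verify that the constant term $p^{t}(0) = T(t) p(t)$ is a well-defined nonzero constant null vector on $\Sigma$. It is $d$-parallel because $p(t)$ is a parallel section of $d + t\eta^{mid}$ and $T(t)\cdot(d + t\eta^{mid}) = d$, so it is literally constant on $\Sigma$; it is nonzero because $T(t)$ is invertible and $p(t) \neq 0$ (the constant $p_{0} = \mathfrak{q}_{\infty}$ is nonzero and $p_{1}$ takes values in $f$, which by the umbilic-free assumption contains no constant line). Since premultiplying $T(t)$ by an element of $\mathrm{O}(4,2)$ merely changes $f^{t}$ by a Lie sphere transformation and preserves $L$-isothermicity, we may arrange $T(t)p(t) = \mathfrak{q}_{\infty}$.

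With $p^{t}$ now a linear conserved quantity of the middle pencil of $f^{t}$ satisfying $(p^{t}(s),p^{t}(s)) = 0$, Theorem~\ref{thm:liso} applies and identifies $f^{t}$ as the Legendre lift of an $L$-isothermic surface in the Laguerre geometry determined by $\mathfrak{q}_{\infty}$. There is no serious obstacle; everything reduces to the single observation that the trivializing gauge $T(t)$ is orthogonal and hence preserves the isotropy of $p(t+s)$.
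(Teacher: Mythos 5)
Your proposal is correct and follows essentially the same route as the paper: transport the linear conserved quantity via Proposition~\ref{prop:calpcq}, use orthogonality of $T(t)$ to see $(p^{t}(s),p^{t}(s))=0$, normalise the constant null vector $T(t)p(t)$ to $\mathfrak{q}_{\infty}$ by a Lie sphere transformation, and conclude with Theorem~\ref{thm:liso}. Your extra check that $T(t)p(t)$ is constant and nonzero is a harmless elaboration of what the paper states without comment.
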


\subsubsection{The Bianchi-Darboux transform}

Suppose that $\hat{f}$ is a Darboux transform of $f$ with parameter $m$ and suppose that $\hat{s}\le \hat{f}$ is the parallel subbundle of $d+m\eta^{mid}$. Then by Proposition~\ref{prop:darbpoly}, if $p(m) \in \Gamma \hat{s}^{\perp}$, then $\hat{p}(t):= \Gamma^{\hat{s}}_{s}(1-t/m)p(t)$ is a linear conserved quantity of $d+m\hat{\eta}^{mid}$ with $(\hat{p}(t),\hat{p}(t))=(p(t),p(t))$ and $\hat{p}(0) = p(0)$. Hence, by Theorem~\ref{thm:liso}, $\hat{f}$ projects to a $L$-isothermic surface in any space form with point sphere complex $p(0)$. It was shown (via a lengthy computation) in~\cite{P2015} that this transformation coincides with the Bianchi-Darboux transformation (see for example~\cite{E1962, MN2000}):
 
\begin{theorem}
The Bianchi-Darboux transforms of an $L$-isothermic surface constitute a 4-parameter family of Darboux transforms of its Legendre lifts.
\end{theorem}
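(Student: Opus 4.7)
The plan is to mirror the strategy used for the isothermic and Guichard cases: combine the degree-preservation and scalar-product invariance from Proposition~\ref{prop:darbpoly} with the characterisation of $L$-isothermic surfaces supplied by Theorem~\ref{thm:liso}, then count parameters via Theorem~\ref{thm:darbtyped}, and finally appeal to~\cite{P2015} for the identification with the classical Bianchi--Darboux transformation.

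First I would fix the setup. By Theorem~\ref{thm:liso}, the Legendre lift $f$ of an $L$-isothermic surface is a special $\Omega$-surface of type $1$ whose middle pencil admits a linear conserved quantity $p$ with $(p(t),p(t))=0$ and $p(0)=\mathfrak{q}_{\infty}$ the lightlike vector defining the Laguerre geometry. Suppose $\hat{f}$ is an umbilic-free Darboux transform of $f$ with parameter $m\in\mathbb{R}^{\times}$, and let $\hat{s}\le\hat{f}$ be the parallel subbundle of $d+m\eta^{mid}$ implementing the transformation. By Theorem~\ref{thm:darbtyped}, there is a $4$-parameter family of such $\hat{f}$ satisfying the additional condition $p(m)\in\Gamma\hat{s}^{\perp}$.

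For each such $\hat{f}$, Proposition~\ref{prop:darbpoly} furnishes a linear conserved quantity
\[
\hat{p}(t):=\Gamma^{\hat{s}}_{s}(1-t/m)\,p(t)
\]
of $d+t\hat{\eta}^{mid}$ satisfying $\hat{p}(0)=p(0)=\mathfrak{q}_{\infty}$ and $(\hat{p}(t),\hat{p}(t))=(p(t),p(t))=0$. By Theorem~\ref{thm:liso} applied to $\hat{f}$, the Legendre map $\hat{f}$ thus projects to an $L$-isothermic surface in the Laguerre geometry defined by $\mathfrak{q}_{\infty}$. Consequently, this distinguished $4$-parameter subfamily of Darboux transforms of $f$ produces $L$-isothermic surfaces in the same Laguerre geometry as $f$.

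The remaining, and genuinely substantive, step is to identify this construction with the classical Bianchi--Darboux transformation of $L$-isothermic surfaces. This is the main obstacle: one must compare the abstract construction of $\hat{f}$ as the span of $\hat{s}$ and $s_{0}=\hat{s}^{\perp}\cap f$ with the explicit formulas for the Bianchi--Darboux transform in $\mathbb{R}^{3,1}$ (or in $\mathbb{R}^{3}$ via isotropy projection) found in~\cite{E1962,MN2000}. I would carry out this identification by unpacking the projections into $\mathfrak{Q}^{3}$ and its tangent plane congruence, using the isotropy projection recalled in Subsection~\ref{subsec:laguerre} to rewrite $\hat{s}$ and $s_{0}$ in terms of the underlying sphere in $\mathbb{R}^{3}$, and matching the resulting system of ODEs with the defining equations of the Bianchi--Darboux transform. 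Since this computation is long and already carried out in~\cite{P2015}, I would cite that reference for the identification and thereby conclude that the $4$-parameter family of Darboux transforms constructed above is precisely the family of Bianchi--Darboux transforms.
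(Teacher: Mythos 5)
Your proposal follows the paper's argument essentially verbatim: impose $p(m)\in\Gamma\hat{s}^{\perp}$, use Proposition~\ref{prop:darbpoly} to transport the lightlike linear conserved quantity (with $\hat{p}(0)=p(0)$ and $(\hat{p}(t),\hat{p}(t))=0$), conclude $L$-isothermicity of the transform via Theorem~\ref{thm:liso}, count the family by Theorem~\ref{thm:darbtyped}, and defer the identification with the classical Bianchi--Darboux transformation to the lengthy computation in~\cite{P2015} — exactly as the paper does. This is correct and takes the same route.
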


\subsubsection{Associate surface}
We shall now recover the result of~\cite[Section 6]{RSS2009} that $L$-isothermic surfaces are the Combescure transforms of minimal surfaces.

Let $x:\Sigma\to \mathbb{R}^{3}$ be an $L$-isothermic surface. Given that 
\[ \eta^{+}  =\mathfrak{t}\wedge d\mathfrak{t}\circ Q\]
for some $Q\in\Gamma End(T\Sigma)$, we have that $(\eta^{+}\mathfrak{q}_{\infty},\mathfrak{p})=0$. By comparing with Section~\ref{subsec:asssurf}, we have that there is an associate Gauss map $\hat{x}$  of $x$ satisfying
\[ 0 = \frac{1}{\hat{\kappa}_{1}} + \frac{1}{\hat{\kappa}_{2}} = \frac{\hat{\kappa}_{1}+\hat{\kappa}_{2}}{\hat{\kappa}_{1}\hat{\kappa}_{2}}.\]
Thus, there exists a minimal surface $\hat{x}$ with the same spherical representation as $x$. In fact, we have a converse to this result: 

\begin{theorem}
Suppose that $\hat{x}:\Sigma\to\mathbb{R}^{3}$ is a minimal surface. Then any Combescure transform $x:\Sigma\to \mathbb{R}^{3}$ of $\hat{x}$ is an $L$-isothermic surface.
\end{theorem}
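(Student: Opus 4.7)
The plan is to use the framework of Subsection~\ref{subsec:asssurf}, viewing $\hat{x}$ as an associate Gauss map of $x$ (with the companion associate surface $x^D$ taken to be trivial). Fix a Laguerre geometry with improper point $\mathfrak{q}_\infty$ and point sphere complex $\mathfrak{p}$ so that both $x$ and $\hat{x}$ live in the resulting $\mathbb{R}^3 \cong \langle\mathfrak{q}_0,\mathfrak{q}_\infty,\mathfrak{p}\rangle^\perp$. Let $f$ be the Legendre lift of $x$ with tangent plane congruence $\mathfrak{t} = n + \mathfrak{p} + (n,x)\mathfrak{q}_\infty$. Since a Combescure transform has parallel tangent planes at corresponding points, $x$ and $\hat{x}$ share the unit normal $n$ and a common system of curvature line coordinates $(u,v)$.

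Next, I would define $Q \in \Gamma\,\text{End}(T\Sigma)$ to be diagonal in the shared curvature frame with $Q(\partial_u) = -\hat{\kappa}_1^{-1}\partial_u$ and $Q(\partial_v) = -\hat{\kappa}_2^{-1}\partial_v$, so that Rodrigues' equations applied to $\hat{x}$ give $dn\circ Q = d\hat{x}$. Set
\[
\eta := \mathfrak{t}\wedge d\mathfrak{t}\circ Q = \mathfrak{t}\wedge \bigl(dn\circ Q + (dn\circ Q,x)\mathfrak{q}_\infty\bigr).
\]
Following the argument in the proof of Theorem~\ref{thm:liso}, closure of $\eta$ reduces to three conditions: (i) $Q$ is trace-free, (ii) $dn\circ Q$ is closed, and (iii) $(dx, dn\circ Q)$ is symmetric. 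Condition (i) is exactly the minimality of $\hat{x}$, since $\text{tr}(Q) = -\hat{\kappa}_1^{-1}-\hat{\kappa}_2^{-1} = -(\hat{\kappa}_1+\hat{\kappa}_2)/(\hat{\kappa}_1\hat{\kappa}_2) = 0$; (ii) is immediate since $dn\circ Q = d\hat{x}$ is exact; and (iii) follows because $dx$ and $d\hat{x}$ are simultaneously diagonalised by the shared curvature frame. The induced quadratic differential $q(X,Y) = (dn,dn\circ Q) = (dn, d\hat{x})$ is therefore non-degenerate away from planar points of $\hat{x}$, so $f$ is an umbilic-free $\Omega$-surface.

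Finally, $(\mathfrak{t},\mathfrak{q}_\infty) = 0$ and $(dn\circ Q + (dn\circ Q, x)\mathfrak{q}_\infty, \mathfrak{q}_\infty) = 0$, so $\eta\,\mathfrak{q}_\infty = 0$ and $\mathfrak{q}_\infty$ is a constant conserved quantity of $d+t\eta$. Writing $\eta^{mid} = \eta - d\tau$ for some $\tau\in\Gamma(\wedge^2 f)$, Lemma~\ref{lem:polygauge} promotes this to a linear conserved quantity $p(t) = \exp(t\tau)\mathfrak{q}_\infty$ of the middle pencil with $(p(t),p(t)) = (\mathfrak{q}_\infty,\mathfrak{q}_\infty) = 0$. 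Theorem~\ref{thm:liso} then identifies $x$ as an $L$-isothermic surface in the Laguerre geometry defined by $p(0) = \mathfrak{q}_\infty$.

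The main obstacle lies less in any deep computation than in carefully matching conventions: one must verify that the paper's notion of Combescure transform genuinely forces $x$ and $\hat{x}$ to share the oriented unit normal (so that Rodrigues' formula produces the required $Q$), and one must exclude planar points of $\hat{x}$, where the construction degenerates and $f$ would drop to an $\Omega_0$-surface.
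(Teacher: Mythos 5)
Your proposal is correct and follows essentially the same route as the paper: the $1$-form $\eta=\mathfrak{t}\wedge d\mathfrak{t}\circ Q$ you construct (with $dn\circ Q=d\hat{x}$ via Rodrigues) is exactly the paper's $\eta=(n+\mathfrak{p}+(n,x)\mathfrak{q}_{\infty})\wedge(d\hat{x}+(d\hat{x},x)\mathfrak{q}_{\infty})$, whose closedness (coming from minimality of $\hat{x}$) together with $\eta\,\mathfrak{q}_{\infty}=0$ and Theorem~\ref{thm:liso} gives the result. You merely spell out the verification the paper leaves implicit, and your caveat about planar points of $\hat{x}$ is consistent with the paper's standing genericity assumptions.
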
 
\begin{proof}
Let $x$ be a Combescure transformation of $\hat{x}$, i.e., $x$ and $\hat{x}$ have the same spherical representation. Let $n$ be the common normal of these surfaces. Then the result follows by the fact that 
\[ \eta:= (n+\mathfrak{p} + (n,x)\mathfrak{q}_{\infty})\wedge (d\hat{x} + (d\hat{x},x)\mathfrak{q}_{\infty})\]
is a closed 1-form.
\end{proof}

The characterisation of $L$-isothermic surfaces as the Combescure transforms of minimal surfaces shows that the class of $L$-isothermic surfaces is preserved by Combescure transformation.

\subsection{Further work}
There is one case that we have not considered in this section - when $f$ admits a linear conserved quantity $p$ such that $(p(t),p(t))$ is a linear polynomial with vanishing constant term. It would be interesting to know if these surfaces have a classical interpretation in the Laguerre geometry defined by $p(0)$. One interesting fact about these surfaces is that if we further project into a Euclidean subgeometry of $\langle p(0)\rangle^{\perp}$ then the resulting surface is an associate surface of itself.
Furthermore, by Remark~\ref{rem:guichroot} these surfaces appear as one of the Calapso transforms of a Guichard surface.

\subsection{Complementary surfaces}
Suppose that $f$ is a special $\Omega$-surface of type 1 with linear conserved quantity $p(t)=p_{0}+tp_{1}$. 
Now the polynomial $(p(t),p(t))$ has degree less than or equal to 1 and admits non-zero roots if and only if either
\begin{itemize}
\item $(p(t),p(t))$ is linear with non-zero constant term, in which case $f$ projects to a Guichard surface in $\langle p(0)\rangle^{\perp}$, by Theorem~\ref{thm:guichard}, or 
\item $(p(t),p(t))$ is the zero polynomial, in which case $f$ projects to an $L$-isothermic surface in the Laguerre geometry defined by $p(0)$, by Theorem~\ref{thm:liso}.
\end{itemize}
Now suppose that $m$ is a root of $p(m)$ and let $\hat{f}$ be the corresponding complementary surface. Now by Theorem~\ref{prop:polymid}, $p_{1}\in\Gamma f$ and thus 
\[ f+\hat{f} = f\oplus\langle p(0)\rangle.\]

Conversely, suppose that $\hat{f}$ is a Darboux transform of $f$ with parameter $m$ such that there exists a constant vector $\mathfrak{q}\in\Gamma (f+\hat{f})$. Let $\hat{\sigma}\in\Gamma \hat{f}$ be a parallel section of $d+m\eta^{mid}$. Now 
\[ \hat{\sigma} = \lambda \, \mathfrak{q} + \sigma\]
for some non-zero smooth function $\lambda$ and $\sigma\in\Gamma f$. Thus,
\[ 0 =  (d+m\eta^{mid})\hat{\sigma} = d\lambda\, \mathfrak{q} + d\sigma + m\lambda\eta^{mid}\mathfrak{q}.\]
Since $\mathfrak{q}$ never belongs to $f$ and $\mathfrak{q}$ belongs to $f+\hat{f}$, we have that $\mathfrak{q}$ never belongs to $f^{\perp}$. Thus, 
\[d\lambda=0\quad \text{and}\quad d\sigma + m\lambda\eta^{mid}\mathfrak{q} = 0.\]
Therefore, $d+t\eta^{mid}$ admits a linear conserved quantity $p$ defined by
\[ p(t) = m\lambda \mathfrak{q} + t\sigma.\]
Furthermore, using that $\hat{\sigma}$ is lightlike, we have that 
\[ (p(t),p(t)) = m\lambda  (m\lambda  |\mathfrak{q}|^{2} + 2t(\sigma,\mathfrak{q})) = m\lambda^{2} ( m-t)|\mathfrak{q}|^{2}.\]
Therefore, $p$ admits non-zero roots and $\hat{f}$ is complementary surface of $f$ with respect to $p$.

If $(p(0),p(0))$ is non-zero then 
\[ f\cap\langle p(0)\rangle^{\perp} = f\cap\hat{f} = \hat{f}\cap\langle p(0)\rangle^{\perp}.\]
Hence, $f$ and $\hat{f}$ project to the same Guichard surface in the conformal geometry $\langle p(0)\rangle^{\perp}$. If $(p(0),p(0))=0$ then, by Corollary~\ref{cor:legperp}, $p(0)$ lies nowhere in $f$ and we must have that $p(0)\in\Gamma \hat{f}$. Thus, $\hat{f}$ is totally umbilic. 

We have thus arrived at the following theorem:

\begin{proposition}
\label{prop:darbchar}
Suppose that $\hat{f}$ is a Darboux transform of $f$. Then there exists a constant vector $\mathfrak{q}\in\Gamma(f+\hat{f})$ if and only if $f$ is a type 1 special $\Omega$-surface that admits $\hat{f}$ as a complementary surface. Furthermore, if $\mathfrak{q}$ is lightlike then $f$ projects to an $L$-isothermic surface in the Laguerre geometry defined by $\mathfrak{q}$ and $\hat{f}$ is totally umbilic. Otherwise, $f$ and $\hat{f}$ project to the same Guichard surface in the conformal geometry $\langle \mathfrak{q}\rangle^{\perp}$. 
\end{proposition}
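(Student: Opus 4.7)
My plan is to observe that the preceding two pages of exposition already execute most of the argument; the task is to organise those computations around the precise biconditional and then supply the remaining geometric identifications in the \emph{furthermore} clause.

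For the main equivalence I would work both directions symmetrically. For the forward implication, given a type~1 special $\Omega$-surface $f$ with linear conserved quantity $p(t)=p_{0}+tp_{1}$ admitting $\hat{f}$ as a complementary surface corresponding to a non-zero root $m$ of $(p(t),p(t))$, use Proposition~\ref{prop:polymid}(\ref{item:polytop}) to place $p_{1}\in\Gamma f$, so that $p_{0}=p(m)-mp_{1}\in\Gamma(f+\hat{f})$, and Proposition~\ref{prop:polymid}(1) to ensure $p_{0}$ is constant; take $\mathfrak{q}:=p_{0}$. For the reverse implication, given a constant $\mathfrak{q}\in\Gamma(f+\hat{f})$ and a $d^{m}$-parallel $\hat{\sigma}\in\Gamma\hat{s}$, decompose $\hat{\sigma}=\lambda\mathfrak{q}+\sigma$ with $\sigma\in\Gamma f$; applying $d+m\eta^{mid}$ gives $d\lambda\,\mathfrak{q}+d\sigma+m\lambda\,\eta^{mid}\mathfrak{q}=0$. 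Since $\mathfrak{q}\notin\Gamma f^{\perp}$ (otherwise $\mathfrak{q}\in\Gamma f$ by $f^{\perp}=f^{(1)}$ and $f$ would be totally umbilic by Lemma~\ref{lem:umbilic}), we split to get $d\lambda=0$ and $d\sigma+m\lambda\,\eta^{mid}\mathfrak{q}=0$, so $p(t):=m\lambda\,\mathfrak{q}+t\sigma$ is a linear conserved quantity of the middle pencil. Using that $\hat{\sigma}$ is lightlike and $\sigma\in\Gamma f$ is lightlike, a direct expansion yields $(p(t),p(t))=m\lambda^{2}(m-t)|\mathfrak{q}|^{2}$, which exhibits $m$ as a root and so certifies $\hat{f}$ as complementary with respect to $p$.

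For the classification in the \emph{furthermore} clause I distinguish on $|\mathfrak{q}|^{2}$ using the identity just obtained. If $\mathfrak{q}$ is lightlike then $(p(t),p(t))\equiv 0$, whence $f$ projects to an $L$-isothermic surface in the Laguerre geometry defined by $p(0)$ by Theorem~\ref{thm:liso}. To conclude $\hat{f}$ is totally umbilic I would invoke Lemma~\ref{lem:umbilic}: it suffices to place $p(0)=m\lambda\mathfrak{q}$ in $\Gamma\hat{f}$. Decompose $p(0)=\sigma'+\hat{\sigma}'$ along $f+\hat{f}=f\oplus\hat{s}$ (unique since $\hat{s}\cap f=\{0\}$). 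Since $f$ and $\hat{s}$ are both isotropic and $p(0)$ is null, the pairing $(\sigma',\hat{\sigma}')$ vanishes; hence $\sigma'\in f\cap\hat{s}^{\perp}=s_{0}\subset\hat{f}$, so $p(0)\in\Gamma\hat{f}$ and Lemma~\ref{lem:umbilic} applies. If instead $\mathfrak{q}$ is not lightlike, $(p(t),p(t))$ is linear with non-zero constant term, so $f$ projects to a Guichard surface in $\langle p(0)\rangle^{\perp}$ by Theorem~\ref{thm:guichard}; the same argument applied to $\hat{f}$ (swapping the roles via the symmetry of Darboux transforms noted after Proposition~\ref{prop:darbfree}) gives the Guichard structure on $\hat{f}$ in the same conformal geometry. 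To see the two projections coincide, note that $s_{0}\subset f\cap\hat{f}$ is isotropic and lies in both $f$ and $\hat{f}$, hence in their radicals, so $s_{0}\perp(f+\hat{f})\ni p(0)$; thus $s_{0}\subseteq f\cap\langle p(0)\rangle^{\perp}$ and equality follows on the open dense set where the latter is a rank~$1$ bundle (Lemma~\ref{lem:totumb}), with the analogous equality $\hat{f}\cap\langle p(0)\rangle^{\perp}=s_{0}$.

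The routine pieces are the expansion of $(p(t),p(t))$ and the identification of conserved quantities with their root behaviour via Theorems~\ref{thm:guichard} and~\ref{thm:liso}; the main obstacle is the clean extraction of $d\lambda=0$ in the reverse direction, where one must rule out $\mathfrak{q}\in\Gamma f^{\perp}$, and the bookkeeping showing $p(0)\in\Gamma\hat{f}$ in the lightlike case, which is exactly the isotropy argument above. Once these two points are nailed down, the classification by $|\mathfrak{q}|^{2}$ and the identification of the projected geometries are immediate from the type~1 dichotomy already established in Section~4.
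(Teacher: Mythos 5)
Your proposal is correct and follows essentially the same route as the paper: the same forward step via Proposition~\ref{prop:polymid} giving $f+\hat{f}=f\oplus\langle p_{0}\rangle$, the same reverse construction $\hat{\sigma}=\lambda\mathfrak{q}+\sigma$ yielding the conserved quantity $p(t)=m\lambda\mathfrak{q}+t\sigma$ with $(p(t),p(t))=m\lambda^{2}(m-t)|\mathfrak{q}|^{2}$, and the same identification $f\cap\langle p(0)\rangle^{\perp}=f\cap\hat{f}=\hat{f}\cap\langle p(0)\rangle^{\perp}$ in the non-lightlike case (your extra symmetric application of the argument to $\hat{f}$ is redundant once the projections coincide). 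The only point to tighten is in the lightlike case: your conclusion $\sigma'\in f\cap\hat{s}^{\perp}$ from $(\sigma',\hat{\sigma}')=0$ needs $\hat{\sigma}'\neq 0$, i.e.\ that $p(0)$ lies nowhere in $f$, which is exactly what the paper extracts from Corollary~\ref{cor:legperp} (and which also supplies the pointwise statement $\mathfrak{q}\notin f^{\perp}$ used to split off $d\lambda=0$).
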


In particular, Proposition~\ref{prop:darbchar} gives us a characterisation of $L$-isothermic surfaces in terms of their Darboux transforms:

\begin{theorem}
An $\Omega$-surface projects to an $L$-isothermic surface in some Laguerre geometry if and only if it admits a totally umbilic Darboux transform.
\end{theorem}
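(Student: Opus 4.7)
The plan is to derive this theorem as a direct corollary of Proposition~\ref{prop:darbchar}, which already relates the existence of a constant vector in $\Gamma(f+\hat{f})$ for a Darboux pair to the $L$-isothermic/Guichard dichotomy. The two implications of the theorem correspond to the two ways of feeding information into Proposition~\ref{prop:darbchar}: in one direction we start with an $L$-isothermic structure, produce a linear conserved quantity via Theorem~\ref{thm:liso}, and build a Darboux transform $\hat{f}$ to which Proposition~\ref{prop:darbchar} applies; in the other, we start with a totally umbilic Darboux transform and use it to extract the required constant vector.

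For the forward implication, I would apply Theorem~\ref{thm:liso} to obtain a linear conserved quantity $p(t)=p_{0}+tp_{1}$ of the middle pencil with $(p(t),p(t))=0$, where $p_{0}$ is the null vector defining the Laguerre geometry. For any generic $m\in\mathbb{R}^{\times}$, the section $p(m)$ is a lightlike parallel section of $d+m\eta^{mid}$, and $\hat{s}:=\langle p(m)\rangle$ determines a Darboux transform $\hat{f}$ of $f$ with parameter $m$. By Proposition~\ref{prop:polymid}, $p_{0}$ is constant and $p_{1}\in\Gamma f$, so $p_{0}=p(m)-mp_{1}\in\Gamma(f+\hat{f})$ is a constant lightlike vector, and Proposition~\ref{prop:darbchar} concludes that $\hat{f}$ is totally umbilic.

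For the reverse implication, the key step is to produce a constant lightlike vector $\mathfrak{q}\in\Gamma(f+\hat{f})$ from the totally umbilic Darboux transform $\hat{f}$. Once this is in hand, Proposition~\ref{prop:darbchar} immediately gives that $f$ is a type $1$ special $\Omega$-surface admitting $\hat{f}$ as a complementary surface and, since $\mathfrak{q}$ is lightlike, that $f$ projects to an $L$-isothermic surface in the Laguerre geometry defined by $\mathfrak{q}$. To produce $\mathfrak{q}$, I would invoke the classical converse of Lemma~\ref{lem:umbilic}: a totally umbilic Legendre map is the family of contact elements in oriented contact with a fixed oriented sphere, so its unique curvature sphere $\hat{s}$ is a constant isotropic line in $\mathbb{R}^{4,2}$. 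Any nonzero $\mathfrak{q}\in\Gamma\hat{s}\subset\Gamma\hat{f}\subset\Gamma(f+\hat{f})$ then has the required properties.

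The main (and only) obstacle is this last invocation: the paper records only the sufficient half of Lemma~\ref{lem:umbilic} (constant section implies totally umbilic), whereas the reverse direction requires the converse. While this converse is folklore in Lie sphere geometry, a self-contained argument might note that for the Darboux subbundle $\hat{s}\le\hat{f}$ parallel under $d+m\eta^{mid}$, $\hat{s}$ is automatically a curvature sphere congruence of $\hat{f}$; in the totally umbilic situation this is the \emph{only} curvature sphere, and a short calculation using $\beta(T\Sigma)\hat{s}=0$ together with the parallelism condition forces $\hat{s}$ to be spanned by a constant vector. Once this is verified, the remainder of the proof is a mechanical application of the proposition already established.
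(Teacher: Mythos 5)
Your overall route is exactly the paper's: the theorem is stated there as an immediate consequence of Proposition~\ref{prop:darbchar}, with the forward direction obtained from Theorem~\ref{thm:liso} and the complementary-surface construction (any nonzero $m$ is a root of $(p(t),p(t))\equiv 0$, and $p(0)=p(m)-mp_{1}$ is a constant lightlike vector in $\Gamma(f+\hat f)$), and the reverse direction resting on the classical fact that a totally umbilic Legendre map has a constant curvature sphere, which supplies the constant lightlike $\mathfrak{q}\in\Gamma\hat f\subset\Gamma(f+\hat f)$ fed into Proposition~\ref{prop:darbchar}. You are also right that the paper only records the implication of Lemma~\ref{lem:umbilic} in one direction and leaves its converse implicit.

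The one flaw is in your proposed self-contained justification of that converse: the parallel subbundle $\hat s$ implementing the Darboux transform is \emph{not} in general a curvature sphere congruence of $\hat f$. For a parallel section $\hat\sigma$ one has $d\hat\sigma=-m\eta^{mid}\hat\sigma$, which takes values in $f^{\perp}$ but generically not in $\hat f=s_{0}\oplus\hat s$; and in your own forward construction the constant curvature sphere of the totally umbilic $\hat f$ is $\langle p(0)\rangle$, while $\hat s=\langle p(m)\rangle=\langle p(0)+mp_{1}\rangle$ is neither constant nor a curvature sphere. So the "short calculation using $\beta(T\Sigma)\hat s=0$ together with parallelism" starts from a false premise. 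Fortunately, the fact you actually need does not involve $\hat s$ or parallelism at all: let $\hat s_{1}\le\hat f$ be the unique curvature sphere congruence of the totally umbilic Legendre map $\hat f$, choose $\hat\sigma_{1}\in\Gamma\hat s_{1}$ and $\rho$ spanning a complement of $\hat s_{1}$ in $\hat f$, and write $d\hat\sigma_{1}=\alpha\,\hat\sigma_{1}+b\,\rho$ (possible since total umbilicity gives $d\hat\sigma_{1}\in\Omega^{1}(\hat f)$). Computing $0=d^{2}\hat\sigma_{1}$ modulo $\hat f$ yields $b(X)\,\beta(Y)\rho=b(Y)\,\beta(X)\rho$ for all $X,Y$, where $\beta$ is the solder form of $\hat f$; if $b\neq 0$ at a point, a nonzero $Y\in\ker b$ gives $\beta(Y)\rho=0$, and since $\beta(Y)\hat\sigma_{1}=0$ by umbilicity this contradicts the immersion condition $\ker\beta=\{0\}$. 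Hence $b\equiv 0$, $d\alpha=0$, and a rescaling of $\hat\sigma_{1}$ is a constant lightlike vector in $\Gamma\hat f\subset\Gamma(f+\hat f)$; Proposition~\ref{prop:darbchar} then completes the argument exactly as you intend.
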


\section{Linear Weingarten surfaces}
Let $\mathfrak{f}:\Sigma\to\mathfrak{Q}^{3}$ be the space form projection of a Legendre map $f:\Sigma\to\mathcal{Z}$ into the (Riemannian or Lorentzian) space form $\mathfrak{Q}^{3}$ with constant sectional curvature $\kappa$. Let $\varepsilon$ be $+1$ in the case that $\mathfrak{Q}^{3}$ is a Riemannian space form and $-1$ in the case that it is Lorentzian. Recall the following definition: 
\begin{definition}
Where $\mathfrak{f}$ immerses we say that it is a linear Weingarten surface if
\begin{equation}
\label{eqn:linwein}
a K + 2b H + c = 0
\end{equation}
for some $a,b,c\in\mathbb{R}$, not all zero, where $K=\kappa_{1}\kappa_{2}$ is the extrinsic Gauss curvature of $\mathfrak{f}$ and $H =\frac{1}{2}(\kappa_{1}+\kappa_{2})$ is the mean curvature of $\mathfrak{f}$. 
\end{definition}
A special case of linear Weingarten surfaces is given by flat fronts:
\begin{definition}
A surface $\mathfrak{f}:\Sigma\to\mathfrak{Q}^{3}$ is a flat front if, where it immerses, the intrinsic Gauss curvature $K_{int}:=\varepsilon \, K + \kappa$ vanishes.
\end{definition}

In \cite{BHR2010} it was shown that flat fronts in hyperbolic space are those $\Omega$-surface whose isothermic sphere congruences each envelop a fixed sphere. In \cite{BHR2012} it was shown that linear Weingarten surfaces in space forms correspond to Lie applicable surfaces whose isothermic sphere congruences take values in certain linear sphere complexes. This theory was discretised in~\cite{BHR2014}. In this section we shall review this theory in terms of linear conserved quantities of the middle pencil.

Recall from Section~\ref{sec:symbreak} how we break symmetry from Lie sphere geometry to space form geometry. Let $\mathfrak{q},\mathfrak{p}\in\mathbb{R}^{4,2}$ be a space form vector and point sphere complex for a space form 
\[\mathfrak{Q}^{3}:= \{ y\in \mathcal{L}: (y,\mathfrak{q})=-1, (y,\mathfrak{p})=0\}.\]
Now assume that $|\mathfrak{p}|^{2}=\pm 1$. Then $\varepsilon = -|\mathfrak{p}|^{2}$. and $\kappa =-|\mathfrak{q}|^{2}$. 

Let $f:\Sigma\to Z$ be a Legendre map and assume that $f$ projects into $\mathfrak{Q}^{3}$. Let $\mathfrak{f}:\Sigma\to \mathfrak{Q}^{3}$ denote the space form projection of $f$ and let $\mathfrak{t}:\Sigma\to\mathfrak{P}^{3}$ denote its tangent plane congruence.

Similarly to~\cite{BHR2014}, we have an alternative characterisation of the linear Weingarten condition: 

\begin{proposition}
\label{prop:weincurv}
$\mathfrak{f}$ is a linear Weingarten surface satisfying~(\ref{eqn:linwein}) if and only if 
\[ [W](s_{1},s_{2}) = [W(s_{1},s_{2})]=0,\]
where $[W]\in \mathbb{P}(S^{2}\mathbb{R}^{4,2})$ is defined by
\[ W:= a\, \mathfrak{q}\odot \mathfrak{q} + 2b \, \mathfrak{q}\odot \mathfrak{p} + c\, \mathfrak{p}\odot \mathfrak{p},\]
and $s_{1},s_{2}\le f$ are the curvature spheres of $f$. 
\end{proposition}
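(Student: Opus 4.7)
The plan is a direct pointwise computation: both sides of the asserted equivalence are local conditions at any point where $\mathfrak{f}$ immerses, so it suffices to evaluate $W$ on convenient lifts of the curvature spheres and to recognise the resulting expression. Nothing subtle is required beyond tracking the defining normalisations of $\mathfrak{q}$ and $\mathfrak{p}$.

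First, I would invoke the curvature-sphere formulas recalled in Section~\ref{subsubsec:conf}: away from umbilics, working in curvature line coordinates for $\mathfrak{f}$, the curvature spheres are $s_{i} = \langle \sigma_{i}\rangle$ with $\sigma_{i} := \mathfrak{t} + \kappa_{i}\mathfrak{f}$, where $\kappa_{1},\kappa_{2}$ are the principal curvatures of $\mathfrak{f}$. The defining relations
\[ (\mathfrak{f},\mathfrak{q}) = -1,\quad (\mathfrak{f},\mathfrak{p}) = 0,\quad (\mathfrak{t},\mathfrak{q}) = 0,\quad (\mathfrak{t},\mathfrak{p}) = -1 \]
(from $\mathfrak{f}\in\mathfrak{Q}^{3}$ and $\mathfrak{t}\in\mathfrak{P}^{3}$) then give $(\mathfrak{q},\sigma_{i}) = -\kappa_{i}$ and $(\mathfrak{p},\sigma_{i}) = -1$.

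Second, I would expand $W(\sigma_{1},\sigma_{2})$ using the formula $a\odot b\,(v,w) = \tfrac{1}{2}((a,v)(b,w)+(a,w)(b,v))$ from Section~2. A two-line calculation yields
\[ \mathfrak{q}\odot\mathfrak{q}\,(\sigma_{1},\sigma_{2}) = \kappa_{1}\kappa_{2} = K,\qquad 2\,\mathfrak{q}\odot\mathfrak{p}\,(\sigma_{1},\sigma_{2}) = \kappa_{1}+\kappa_{2} = 2H,\qquad \mathfrak{p}\odot\mathfrak{p}\,(\sigma_{1},\sigma_{2}) = 1, \]
so that
\[ W(\sigma_{1},\sigma_{2}) = aK + 2bH + c. \]

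Third, I would observe that rescaling the $\sigma_{i}$ or $W$ multiplies $W(\sigma_{1},\sigma_{2})$ by a non-zero scalar; hence the vanishing condition is well-defined on $[W]\in\mathbb{P}(S^{2}\mathbb{R}^{4,2})$ and on the lines $s_{1},s_{2}$, justifying the notation $[W](s_{1},s_{2}) = [W(\sigma_{1},\sigma_{2})]$. The identity above then gives both implications at once: $[W](s_{1},s_{2}) = 0$ if and only if $aK + 2bH + c = 0$. There is no real obstacle; the only thing to check carefully is the bookkeeping of the four inner products $(\mathfrak{q},\sigma_{i})$, $(\mathfrak{p},\sigma_{i})$, and that the proposition is understood to be asserted where $\mathfrak{f}$ is an immersion (so that $\kappa_{1},\kappa_{2}$, and hence $H,K$, are defined).
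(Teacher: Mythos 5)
Your proposal is correct and follows exactly the paper's route: the paper's own proof is the one-line remark ``use the lifts $\sigma_{i}=\mathfrak{t}+\kappa_{i}\mathfrak{f}$ of the curvature spheres,'' and your computation of $(\mathfrak{q},\sigma_{i})=-\kappa_{i}$, $(\mathfrak{p},\sigma_{i})=-1$ and hence $W(\sigma_{1},\sigma_{2})=aK+2bH+c$ is precisely the calculation being alluded to. The remarks on scale-invariance (justifying the projective statement) and on working where $\mathfrak{f}$ immerses are sensible bookkeeping and introduce no gap.
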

\begin{proof}
One can easily deduce this result by using the lifts 
\[ \sigma_{1} = \mathfrak{t} + \kappa_{1}\mathfrak{f} \quad \text{and} \quad \sigma_{2} = \mathfrak{t} + \kappa_{2}\mathfrak{f}\]
of the curvature spheres. 
\end{proof}

From Proposition~\ref{prop:weincurv} one quickly deduces the observation of~\cite{BHR2012}, that if $f$ projects to a linear Weingarten surface in a space form with space form vector $\mathfrak{q}$ and point sphere complex $\mathfrak{p}$ then $f$ projects to a linear Weingarten surface in any other space form with space form vector and point sphere complex chosen from $\langle \mathfrak{q},\mathfrak{p}\rangle$. 

\subsection{Linear Weingarten surfaces in Lie geometry}
We shall now recover the results of~\cite{BHR2012} regarding the Lie applicability of umbilic-free linear Weingarten surfaces. 

\begin{proposition}
\label{prop:weinmid}
$\mathfrak{f}$ is an umbilic-free linear Weingarten surface satisfying~(\ref{eqn:linwein})
if and only if $f$ is a Lie applicable surface with middle potential
\[ \eta^{mid} = c\, \mathfrak{f}\wedge d\mathfrak{f} - b\,(\mathfrak{f}\wedge d\mathfrak{t}+ \mathfrak{t}\wedge d\mathfrak{f}) + a\, \mathfrak{t}\wedge d\mathfrak{t}\]
and quadratic differential 
\[q = -c (d\mathfrak{f},d\mathfrak{f}) + 2b(d\mathfrak{f},d\mathfrak{t})- a(d\mathfrak{t},d\mathfrak{t}).\]
Furthermore, tubular linear Weingarten surfaces give rise to $\Omega_{0}$-surfaces and non-tubular linear Weingarten surfaces give rise to $\Omega$-surfaces whose isothermic sphere congruences are real in the case that $b^{2}-ac>0$ and complex conjugate in the case that $b^{2}-ac<0$.  
\end{proposition}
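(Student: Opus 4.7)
The plan is to work in curvature line coordinates $(u,v)$ about an umbilic-free point, where Rodrigues' equations give $d_u\mathfrak{t}=-\kappa_1 d_u\mathfrak{f}$ and $d_v\mathfrak{t}=-\kappa_2 d_v\mathfrak{f}$. Since $\mathfrak{f},\mathfrak{t}\in\Gamma f$ and $d\mathfrak{f},d\mathfrak{t}\in\Omega^1(f^{(1)})=\Omega^1(f^\perp)$, the proposed $\eta$ automatically takes values in $f\wedge f^\perp$. From there, I would compute $q$, then closure of $\eta$, and extract the $\Omega$/$\Omega_0$ dichotomy from the degeneracy of $q$.

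For the quadratic differential, I would apply the trace formula term by term, exploiting that $\mathfrak{f},\mathfrak{t}$ are null, mutually orthogonal, and orthogonal to $d\mathfrak{f},d\mathfrak{t}$. For instance, $(\mathfrak{f}\wedge d\mathfrak{f})(X)$ acting on $d_Y\sigma\in f^\perp$ and projected to $f$ in the basis $\mathfrak{f},\mathfrak{t}$ contributes $-(d_X\mathfrak{f},d_Y\mathfrak{f})\,\mathfrak{f}$, with analogous contributions from the other three summands. Summing yields
\[
q=-c(d\mathfrak{f},d\mathfrak{f})+2b(d\mathfrak{f},d\mathfrak{t})-a(d\mathfrak{t},d\mathfrak{t}),
\]
which by Rodrigues diagonalises as $q=-E(a\kappa_1^2+2b\kappa_1+c)\,du^2-G(a\kappa_2^2+2b\kappa_2+c)\,dv^2$.

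For closure, the key observation is that when $a\neq 0$ the symbol $a\lambda^2+2b\lambda+c$ factors as $a(\lambda-\kappa_+)(\lambda-\kappa_-)$ with $\kappa_\pm=(-b\pm\sqrt{b^2-ac})/a$; setting $\sigma^\pm:=\mathfrak{t}+\kappa_\pm\mathfrak{f}$, a direct expansion gives
\[
\eta=\tfrac{a}{2}\bigl(\sigma^+\wedge d\sigma^- + \sigma^-\wedge d\sigma^+\bigr).
\]
Since the trivial connection satisfies $d^2=0$, closure of each summand reduces to $d\sigma^+\curlywedge d\sigma^-=0$, and a Rodrigues computation in curvature coordinates evaluates this to $\tfrac{2}{a}(aK+2bH+c)\,d_u\mathfrak{f}\wedge d_v\mathfrak{f}$, vanishing precisely under~(\ref{eqn:linwein}). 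When $a=0$ the same strategy works with $\sigma^+:=\mathfrak{f}$, $\sigma^-:=-2b\mathfrak{t}+c\mathfrak{f}$, producing $2(2bH+c)\,d_u\mathfrak{f}\wedge d_v\mathfrak{f}$. Comparison with~(\ref{eqn:midchris}) (after absorbing the scalar into the normalisation of $\sigma^\pm$) then identifies $\eta$ as the middle potential with Christoffel dual lifts $\sigma^\pm$.

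For the final dichotomy, $q$ is degenerate iff some $a\kappa_i^2+2b\kappa_i+c=0$; substituting back into~(\ref{eqn:linwein}) and cancelling the umbilic factor $\kappa_1-\kappa_2$ forces $a\kappa_i+b=0$, so $\kappa_i=-b/a$ is constant (a tubular surface) and $b^2=ac$; otherwise $f$ is an $\Omega$-surface. The isothermic sphere congruences $s^\pm=\langle\sigma^\pm\rangle=\langle\mathfrak{t}+\kappa_\pm\mathfrak{f}\rangle$ are therefore real and distinct iff $b^2-ac>0$ and complex conjugate iff $b^2-ac<0$. The main subtlety I expect is tracking the scalar normalisation of $\sigma^\pm$ to confirm they are genuine Christoffel dual lifts (rather than merely producing the right gauge orbit), and giving a uniform treatment of the degenerate cases $a=0$ and $b^2=ac$.
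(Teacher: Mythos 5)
Your computation of $q$, your closure computation, and the equivalence ``$\eta$ closed $\Leftrightarrow$ $aK+2bH+c=0$'' are all correct, and the factorisation $\eta=\tfrac{a}{2}\bigl(\sigma^{+}\wedge d\sigma^{-}+\sigma^{-}\wedge d\sigma^{+}\bigr)$ with $\sigma^{\pm}=\mathfrak{t}+\kappa_{\pm}\mathfrak{f}$ is a reasonable alternative to the paper's one-line identity $d\eta=(aK+2bH+c)\,d\mathfrak{f}\curlywedge d\mathfrak{f}$. The genuine gap is the step ``comparison with~(\ref{eqn:midchris}) \dots identifies $\eta$ as the middle potential''. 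Equation~(\ref{eqn:midchris}) is a formula for $\eta^{mid}$ in terms of the Christoffel dual lifts, and those lifts are themselves defined a posteriori from $\eta^{mid}$ (via its gauges $\eta^{\pm}$); so exhibiting some lifts of some sphere congruences for which $\eta$ has the same algebraic shape does not by itself show that $\eta=\eta^{mid}$, nor that $\langle\sigma^{\pm}\rangle$ are \emph{the} isothermic sphere congruences---and your final dichotomy leans on exactly that identification. What must be verified is the defining property of the middle potential: that the $S_{1}\wedge S_{2}$-part of $\eta$ in the Lie cyclide splitting takes values in $\wedge^{2}f$. For a symmetric expression $\tfrac12(\sigma^{+}\wedge d\sigma^{-}+\sigma^{-}\wedge d\sigma^{+})$ with $\sigma^{\pm}\in\Gamma f$ this holds precisely when $\langle\sigma^{+}\rangle,\langle\sigma^{-}\rangle$ separate the curvature spheres harmonically, i.e.\ $(\kappa_{+}-\kappa_{1})(\kappa_{-}-\kappa_{2})+(\kappa_{+}-\kappa_{2})(\kappa_{-}-\kappa_{1})=0$, which for your $\kappa_{\pm}$ is a nonzero multiple of $aK+2bH+c$; so the identification is true under the Weingarten hypothesis, but this check (not the scalar normalisation, which is harmless) is the missing content, and you have located the subtlety in the wrong place.

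The paper carries out this verification directly: modulo $\Omega^{1}(\wedge^{2}f)$ and using the Weingarten condition, $\eta$ equals $\tfrac{1}{\kappa_{1}-\kappa_{2}}\bigl((a\kappa_{1}+b)\,\sigma_{2}\wedge d\sigma_{2}-(a\kappa_{2}+b)\,\sigma_{1}\wedge d\sigma_{1}\bigr)$ with $\sigma_{i}=\mathfrak{t}+\kappa_{i}\mathfrak{f}$ lifts of the curvature spheres, whence $\eta_{\mathfrak{m}}\in\Omega^{1}(\wedge^{2}f)$ is manifest; note that this argument also covers the tubular case $b^{2}=ac$, where your appeal to~(\ref{eqn:midchris}) cannot work at all since that formula presupposes two distinct isothermic sphere congruences ($\epsilon\neq 0$)---there one must instead compare with~(\ref{eqn:middle}) at $\epsilon=0$ or run the $\mathfrak{m}$-part argument, so the ``uniform treatment'' you hope for does not come from~(\ref{eqn:midchris}). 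Finally, once $\eta=\eta^{mid}$ is established, your real/complex-conjugate dichotomy via $\kappa_{\pm}$ is fine, but the paper obtains it more cheaply from the signature of $q$: under the Weingarten condition $(a\kappa_{1}+b)(a\kappa_{2}+b)=b^{2}-ac$, so, up to positive factors, the product of the diagonal coefficients of $q$ in curvature line coordinates is $-(\kappa_{1}-\kappa_{2})^{2}(b^{2}-ac)$, making $q$ degenerate, indefinite or definite according as $b^{2}-ac$ is zero, positive or negative, without having to name the isothermic sphere congruences explicitly.
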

\begin{proof}
Let 
\[ \eta := c\, \mathfrak{f}\wedge d\mathfrak{f} - b\,(\mathfrak{f}\wedge d\mathfrak{t}+ \mathfrak{t}\wedge d\mathfrak{f}) + a\, \mathfrak{t}\wedge d\mathfrak{t}.\]
Then
\begin{align*}
d\eta &= c\, d\mathfrak{f}\curlywedge d\mathfrak{f} - b\, (d\mathfrak{f}\curlywedge d\mathfrak{t}+ d\mathfrak{t}\curlywedge d\mathfrak{f}) + a\, d\mathfrak{t}\curlywedge d\mathfrak{t}\\
&= (a\, K +  2b\, H + c)\, d\mathfrak{f}\curlywedge d\mathfrak{f}.
\end{align*}
Thus $\eta$ is closed if and only if $\mathfrak{f}$ is a linear Weingarten surface satisfying~(\ref{eqn:linwein}). Furthermore, one can check that, modulo $\Omega^{1}(\wedge^{2}f)$, $\eta$ is equal to
\begin{equation*}
\tfrac{1}{\kappa_{1}-\kappa_{2}}((a\kappa_{1}+b) (\mathfrak{t}+\kappa_{2}\mathfrak{f})\wedge d(\mathfrak{t}+\kappa_{2}\mathfrak{f}) -  (a\kappa_{2}+b) (\mathfrak{t}+\kappa_{1}\mathfrak{f})\wedge d(\mathfrak{t}+\kappa_{1}\mathfrak{f}) ).
\end{equation*}
Since $\mathfrak{t}+\kappa_{1}\mathfrak{f}\in\Gamma s_{1}$ and $\mathfrak{t}+\kappa_{2}\mathfrak{f}\in\Gamma s_{2}$, we have that the $\Omega^{1}(S_{1}\wedge S_{2})$ part of $\eta$ lies in $\Omega^{1}(\wedge^{2}f)$. Thus $\eta$ is the middle potential $\eta^{mid}$. 

Now the quadratic differential induced by $\eta^{mid}$ is given by 
\begin{align*} 
q &= -c (d\mathfrak{f},d\mathfrak{f}) + 2b(d\mathfrak{f},d\mathfrak{t})- a(d\mathfrak{t},d\mathfrak{t})\\ 
&= (-c-2b \kappa_{1} - a\kappa_{1}^{2})(d_{1}\mathfrak{f},d_{1}\mathfrak{f}) + (-c-2b\kappa_{2} - a\kappa_{2}^{2})(d_{2}\mathfrak{f},d_{2}\mathfrak{f}),
\end{align*}
using Rodrigues' equations, $d_{i}\mathfrak{t} +\kappa_{i}d_{i}\mathfrak{f}=0$. Since $c= -b(\kappa_{1}+\kappa_{2}) - a\kappa_{1}\kappa_{2}$, we have that 
\[ q = (\kappa_{1}-\kappa_{2})(-(a\kappa_{1}+b)(d_{1}\mathfrak{f},d_{1}\mathfrak{f}) + (a\kappa_{2}+b)(d_{2}\mathfrak{f},d_{2}\mathfrak{f})).\]
Since $\mathfrak{f}$ is an umbilic-free immersion, i.e., $\kappa_{1}\neq \kappa_{2}$, $q$ is non-zero. Moreover, 
\[ -(a\kappa_{1}+b)(a\kappa_{2}+b) = - a^{2}K - 2ab H - b^{2} = - (b^{2}-ac).\]
Therefore, $q$ is degenerate if and only if $b^{2}-ac=0$ if and only if $\mathfrak{f}$ is tubular. Furthermore, if $b^{2}-ac>0$ then $q$ is indefinite and the isothermic sphere congruences of $f$ are real, whereas if $b^{2}-ac<0$ then $q$ is positive definite and the isothermic sphere congruences of $f$ are complex conjugate. 
\end{proof}

\begin{corollary}
\label{cor:weinmidpoly}
$\mathfrak{f}$ is a linear Weingarten surface satisfying~(\ref{eqn:linwein}) if and only if  
\[
p(t) := \mathfrak{p}+ t(-b\mathfrak{f}+a\mathfrak{t})\quad\text{and}\quad
q(t) := \mathfrak{q} +t(c\mathfrak{f} - b \mathfrak{t})
\] 
are conserved quantities of the middle pencil $d+t\eta^{mid}$.
\end{corollary}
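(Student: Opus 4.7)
The strategy is to leverage the explicit form of the middle potential from Proposition~\ref{prop:weinmid} in the forward direction, and to combine a uniqueness argument for $\eta^{mid}$ with its closedness in the converse. The key orthogonalities we need are $(\mathfrak{f},\mathfrak{q})=-1=(\mathfrak{t},\mathfrak{p})$ with all other pairings among $\{\mathfrak{f},\mathfrak{t},\mathfrak{p},\mathfrak{q}\}$ vanishing, together with $(d\mathfrak{f},\mathfrak{t})=(d\mathfrak{t},\mathfrak{f})=0$ (Legendre condition); since $\mathfrak{p},\mathfrak{q}$ are constant, these further imply $(d\mathfrak{f},\mathfrak{p})=(d\mathfrak{f},\mathfrak{q})=(d\mathfrak{t},\mathfrak{p})=(d\mathfrak{t},\mathfrak{q})=0$, so $d\mathfrak{f},d\mathfrak{t}$ take values in $V^\perp := \langle\mathfrak{f},\mathfrak{t},\mathfrak{p},\mathfrak{q}\rangle^\perp$.

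For the forward implication, assume $aK+2bH+c=0$, so that Proposition~\ref{prop:weinmid} gives
\[ \eta^{mid} = c\,\mathfrak{f}\wedge d\mathfrak{f} - b(\mathfrak{f}\wedge d\mathfrak{t}+\mathfrak{t}\wedge d\mathfrak{f}) + a\,\mathfrak{t}\wedge d\mathfrak{t}. \]
A direct application of $\xi\wedge\zeta(v)=(\xi,v)\zeta-(\zeta,v)\xi$ yields $\eta^{mid}\mathfrak{f}=\eta^{mid}\mathfrak{t}=0$, $\eta^{mid}\mathfrak{p}=b\,d\mathfrak{f}-a\,d\mathfrak{t}$, and $\eta^{mid}\mathfrak{q}=-c\,d\mathfrak{f}+b\,d\mathfrak{t}$. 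Expanding $(d+t\eta^{mid})p(t)$ in powers of $t$, the constant term vanishes as $\mathfrak{p}$ is constant, the $t$-coefficient $d(-b\mathfrak{f}+a\mathfrak{t})+\eta^{mid}\mathfrak{p}=(-b\,d\mathfrak{f}+a\,d\mathfrak{t})+(b\,d\mathfrak{f}-a\,d\mathfrak{t})=0$, and the $t^2$-coefficient $\eta^{mid}(-b\mathfrak{f}+a\mathfrak{t})=0$ since $-b\mathfrak{f}+a\mathfrak{t}\in\Gamma f$; the calculation for $q(t)$ is analogous.

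For the converse, reading off the $t$-coefficient of $(d+t\eta^{mid})p(t)=0$ yields $\eta^{mid}\mathfrak{p}=b\,d\mathfrak{f}-a\,d\mathfrak{t}$ (the $t^2$-condition holds automatically since $p_1\in\Gamma f$ and $\eta^{mid}$ annihilates $f$), and similarly $\eta^{mid}\mathfrak{q}=-c\,d\mathfrak{f}+b\,d\mathfrak{t}$. Using $f^\perp=f\oplus V^\perp$, every element of $f\wedge f^\perp$ decomposes uniquely as $\alpha\,\mathfrak{f}\wedge\mathfrak{t}+\mathfrak{f}\wedge A+\mathfrak{t}\wedge B$ with $\alpha$ a scalar and $A,B\in V^\perp$; evaluating this ansatz for $\eta^{mid}$ on $\mathfrak{p}$ and on $\mathfrak{q}$ and matching with the prescribed values forces $\alpha=0$, $A=c\,d\mathfrak{f}-b\,d\mathfrak{t}$, $B=-b\,d\mathfrak{f}+a\,d\mathfrak{t}$, so $\eta^{mid}$ has exactly the form from Proposition~\ref{prop:weinmid}. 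Its closedness together with the computation $d\eta^{mid}=(aK+2bH+c)\,d\mathfrak{f}\curlywedge d\mathfrak{f}$ and the non-vanishing of $d\mathfrak{f}\curlywedge d\mathfrak{f}$ at immersed points then yields $aK+2bH+c=0$. The main subtlety is this uniqueness step: one must justify the decomposition $f\wedge f^\perp \cong \langle\mathfrak{f}\wedge\mathfrak{t}\rangle\oplus f\otimes V^\perp$ (a $5$-dimensional bundle) and verify that evaluation on just the two vectors $\mathfrak{p}$ and $\mathfrak{q}$ pins down all three pieces $\alpha,A,B$.
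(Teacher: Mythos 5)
Your proof is correct and matches the intended argument: the paper states this corollary without a separate proof, as a direct consequence of Proposition~\ref{prop:weinmid}, and your forward direction is exactly the verification $(d+t\eta^{mid})p(t)=(d+t\eta^{mid})q(t)=0$ using the explicit middle potential, while your converse legitimately pins down $\eta^{mid}$ from its values on $\mathfrak{p}$ and $\mathfrak{q}$ via the splitting $f\wedge f^{\perp}=\wedge^{2}f\oplus f\wedge V^{\perp}$ and then invokes closedness and the computation $d\eta^{mid}=(aK+2bH+c)\,d\mathfrak{f}\curlywedge d\mathfrak{f}$ from the proof of Proposition~\ref{prop:weinmid}. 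One small wording slip: not \emph{all} other pairings among $\{\mathfrak{f},\mathfrak{t},\mathfrak{p},\mathfrak{q}\}$ vanish (indeed $(\mathfrak{p},\mathfrak{p})=\pm1$ and $(\mathfrak{q},\mathfrak{q})=-\kappa$), but only the cross-pairings you actually use, so the argument is unaffected.
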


Clearly, real linear combinations of polynomial conserved quantities are polynomial conserved quantities. However, the degree of the polynomials may not be preserved. For example, one can check that there exists a constant conserved quantity within the span of the conserved quantities $p$ and $q$ of  Corollary~\ref{cor:weinmidpoly} if and only if $\mathfrak{f}$ is a tubular linear Weingarten surface, i.e., $b^{2}-ac=0$. Therefore, in the non-tubular case, any linear combination of $p$ and $q$ yields a linear conserved quantity of $d+t\eta^{mid}$. In light of this we will consider 2 dimensional vector spaces of linear conserved quantities for $\Omega$-surfaces: 

\subsubsection{Non-tubular linear Weingarten surfaces}

Suppose that $f$ is an $\Omega$-surface and suppose that $P$ is a 2 dimensional vector space of linear conserved quantities of $d+t\eta^{mid}$. By $P(t)$ we shall denote the subset of $\underline{\mathbb{R}}^{4,2}$ formed by evaluating $P$ at $t$. 

\begin{lemma}
For each $t\in\mathbb{R}$, $P(t)$ is a rank 2 subbundle of $\underline{\mathbb{R}}^{4,2}$. 
\end{lemma}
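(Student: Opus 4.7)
The aim is to show that, at every $x\in\Sigma$, the evaluation map
\[ \mathrm{ev}_{t,x}\colon P\longrightarrow\mathbb{R}^{4,2},\qquad p\mapsto p(t)|_x \]
is injective. Since $\dim P=2$, this forces the image $P(t)|_x$ to be a $2$-dimensional subspace at each $x$; and since $\mathrm{ev}_{t,x}$ arises from a single linear map out of the finite-dimensional space $P$ followed by evaluation of a smooth section, the family $\{P(t)|_x\}_{x\in\Sigma}$ assembles into a smooth rank-$2$ subbundle of $\underline{\mathbb{R}}^{4,2}$.

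The first step is to invoke Proposition~\ref{prop:polymid} to write every element of $P$ as $p(s)=p_0+sp_1$ with $p_0\in\mathbb{R}^{4,2}$ constant and $p_1\in\Gamma f$, and to unpack the identity $(d+s\eta^{mid})p(s)=0$ into the coefficient-wise relations
\[ dp_0=0,\qquad dp_1+\eta^{mid}p_0=0,\qquad \eta^{mid}p_1=0. \]
These will be the only structural facts about elements of $P$ that I need.

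Now suppose that $p(t)|_x=0$ for some $p\in P$ and some $x\in\Sigma$. Because $d^t:=d+t\eta^{mid}$ is flat and $p(t)$ is a globally defined $d^t$-parallel section, its vanishing locus is both open and closed; in particular $p(t)$ vanishes on an open neighbourhood $U$ of $x$, i.e.\ $p_0+tp_1\equiv 0$ on $U$. If $t=0$ this gives $p_0=0$ at once, and the remaining coefficient equations then force $dp_1=0$ and $\eta^{mid}p_1=0$, so that $p_1$ is a constant vector of $\mathbb{R}^{4,2}$ lying (globally) in $\Gamma f$. If $t\neq 0$ then $p_1=-p_0/t$ already exhibits $p_1$ as a constant vector of $\mathbb{R}^{4,2}$ lying in $f$ on the open set $U$.

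In either case, were $p_1$ nonzero, the local argument from the proof of Lemma~\ref{lem:umbilic} would make $\langle p_1\rangle$ a curvature sphere congruence of $f$ with curvature subbundle equal to the whole of $TU$, contradicting the standing assumption that $f$ is umbilic-free. Hence $p_1=0$, and then $p_0=-tp_1=0$, so $p=0$, giving the required injectivity. The only delicate step is the propagation of the pointwise vanishing of $p(t)$ at $x$ to vanishing on an open neighbourhood, which uses precisely the flatness of $d^t$; from there Lemma~\ref{lem:umbilic} closes the argument mechanically.
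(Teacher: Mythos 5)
Your argument is correct, but it follows a genuinely different route from the paper's. The paper's proof is a two-line reduction: by Lemma~\ref{lem:lincon} (equivalently, part~\ref{item:polytop} of Proposition~\ref{prop:polymid}) every $p\in P$ is recovered from its constant term via $p(t)=\exp(-t\,\sigma^{+}\odot\sigma^{-})p_{0}$, and since $\exp(-t\,\sigma^{+}\odot\sigma^{-})$ is pointwise invertible, linear independence of $p(t),q(t)$ at each point reduces to linear independence of $p_{0},q_{0}$, i.e.\ of $p,q$ in $P$. You instead prove injectivity of the evaluation map directly: a $d^{t}$-parallel section vanishing at a point vanishes on a neighbourhood (parallelity alone gives this --- flatness is not actually needed for the openness of the zero set), and then the coefficient equations, the membership $p_{1}\in\Gamma f$ from Proposition~\ref{prop:polymid}, and the umbilic-free hypothesis via (the local form of) Lemma~\ref{lem:umbilic} force $p=0$. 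Both arguments ultimately rest on Proposition~\ref{prop:polymid}; yours invokes the umbilic-free condition explicitly at this point, whereas the paper has already packaged it into the exponential formula. What the paper's version buys is brevity and the explicit description of $P(t)$ as the image of $P(0)$ under a single invertible map, which is convenient for the subsequent pencil of metrics $g_{t}$; what yours buys is independence from the precise formula for $p_{1}$, using only that the top coefficient lies in $f$ and that $f$ admits no nonzero constant sections. One small point to tidy: in your $t\neq 0$ case, after concluding $p_{0}=0$ and $p_{1}\equiv 0$ on $U$, you should note (exactly as you did for $t=0$) that $dp_{1}=-\eta^{mid}p_{0}=0$ makes $p_{1}$ globally constant, so it vanishes on all of $\Sigma$ and not merely on $U$; as written that last globalisation is left implicit.
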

\begin{proof}
Let $p,q\in P$. Then by Lemma~\ref{lem:lincon},
\[ p(t) = \exp(-t\, \sigma^{+}\odot \sigma^{-})p_{0} \quad \text{and}\quad q(t) = \exp(-t\, \sigma^{+}\odot \sigma^{-})q_{0}, \]
for some $q_{0},p_{0}\in \mathbb{R}^{4,2}$. Then $p(t)$ and $q(t)$ are linearly dependent sections of $P(t)$ for some $t\in\mathbb{R}$ if and only if $p_{0}$ and $q_{0}$ are linearly dependent if and only if $p$ and $q$ are linearly dependent. 
\end{proof}

We may equip $P$ with a pencil of metrics $\{ g_{t}\}_{t\in\mathbb{R}\cup\{\infty\}}$ defined for each $t\in\mathbb{R}$ and $\alpha,\beta\in P$ by 
\[ g_{t}(\alpha,\beta) := (\alpha(t),\beta(t)),\]
and
\[g_{\infty} := \lim_{t\to \infty} \tfrac{1}{t}g_{t}.\]
Thus, if we write $\alpha(t) = \alpha_{0} + t \alpha_{1}$ and $\beta(t)=\beta_{0}+t \beta_{1}$ then
\[ g_{\infty}(\alpha,\beta) = (\alpha_{0},\beta_{1}) + (\beta_{0},\alpha_{1}),\]
Then, for general $t\in\mathbb{R}$, we have that 
\[ g_{t} = g_{0} + t\,g_{\infty}.\]

We shall now consider the 3-dimensional vector space $S^{2}P$ formed by the abstract symmetric product on $P$. For each $t\in\mathbb{R}$ we can identify elements of $S^{2}P$ with symmetric endomorphisms on $\mathbb{R}^{4,2}$ via the map 
\[ \phi_{t} : S^{2}P\to S^{2}P(t), \quad \alpha\odot \beta \mapsto \alpha(t)\odot \beta(t).\]
Furthermore, we have an isomorphism from $S^{2}P$ to the space of symmetric tensors on $P$ with respect to $g_{\infty}$, denoted $S^{2}_{\infty}P$ defined by 
\[ \phi_{\infty}: S^{2}P\to S^{2}_{\infty}P, \quad  \alpha\odot \beta \mapsto (\alpha\odot \beta)_{\infty},\]
where for $\gamma,\delta \in P$, 
\[  (\alpha\odot \beta)_{\infty}(\gamma,\delta) := \tfrac{1}{2}(g_{\infty}(\alpha,\gamma)g_{\infty}(\beta,\delta) + g_{\infty}(\alpha,\delta)g_{\infty}(\beta,\gamma)).\]

Using Corollary~\ref{cor:weinmidpoly}, we obtain the following proposition:

\begin{proposition}
\label{prop:lwlinspace}
Suppose that $\mathfrak{f}$ is a non-tubular linear Weingarten surface satisfying~(\ref{eqn:linwein}). Then $f$ is an $\Omega$-surface whose middle pencil admits a 2-dimensional space of linear conserved quantities $P$ with $g_{0}\neq 0$ and non-degenerate $g_{\infty}$. Furthermore, the linear Weingarten condition $[W]$ is given by $[\phi_{0}\circ \phi_{\infty}^{-1}(g_{\infty})]$.
\end{proposition}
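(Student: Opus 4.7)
The plan is to invoke the prior preparatory results and reduce the proof to a short computation with two explicit linear conserved quantities. By Proposition~\ref{prop:weinmid}, a non-tubular linear Weingarten $\mathfrak{f}$ already gives rise to an $\Omega$-surface $f$ with an explicit middle potential, and Corollary~\ref{cor:weinmidpoly} hands us two linear conserved quantities
\[
p(t) = \mathfrak{p} + t(-b\mathfrak{f} + a\mathfrak{t}), \qquad q(t) = \mathfrak{q} + t(c\mathfrak{f} - b\mathfrak{t}).
\]
Since $p(0) = \mathfrak{p}$ and $q(0) = \mathfrak{q}$ are linearly independent in $\mathbb{R}^{4,2}$, the span $P := \langle p, q\rangle$ is two-dimensional, and this is the candidate for the required vector space of linear conserved quantities.

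Next I would compute the Gram matrices of $g_0$ and $g_\infty$ in the basis $\{p,q\}$ using only the defining relations $(\mathfrak{f},\mathfrak{q}) = -1$, $(\mathfrak{f},\mathfrak{p}) = 0$, $(\mathfrak{t},\mathfrak{q}) = 0$, $(\mathfrak{t},\mathfrak{p}) = -1$, together with $(\mathfrak{p},\mathfrak{q}) = 0$, $|\mathfrak{p}|^2 = \pm 1$, $|\mathfrak{q}|^2 = -\kappa$. A direct expansion yields
\[
[g_0] = \begin{pmatrix} \pm 1 & 0 \\ 0 & -\kappa \end{pmatrix}, \qquad [g_\infty] = -2\begin{pmatrix} a & -b \\ -b & c \end{pmatrix}.
\]
Then $g_0 \neq 0$ because $|\mathfrak{p}|^2 = \pm 1$, and $\det[g_\infty] = -4(b^2 - ac) \neq 0$ is exactly the non-tubular condition on $\mathfrak{f}$. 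This settles the first two claims.

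For the final identification, I would interpret $\phi_\infty$ in matrix terms: for $T \in S^2 P$ with matrix $[T]$ in the basis $\{p,q\}$, unwinding the definition of $(\alpha\odot\beta)_\infty$ shows $[\phi_\infty(T)] = [g_\infty][T][g_\infty]$, so $\phi_\infty^{-1}(g_\infty)$ is represented by $[g_\infty]^{-1}$. Inverting the $2\times 2$ matrix gives
\[
\phi_\infty^{-1}(g_\infty) = \tfrac{1}{2(b^2-ac)}\bigl(c\,p\odot p + 2b\, p\odot q + a\, q\odot q\bigr),
\]
and applying $\phi_0$ simply evaluates the slot arguments at $t = 0$, replacing $p$ by $\mathfrak{p}$ and $q$ by $\mathfrak{q}$. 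The outcome is a nonzero scalar multiple of $W = a\,\mathfrak{q}\odot\mathfrak{q} + 2b\,\mathfrak{q}\odot\mathfrak{p} + c\,\mathfrak{p}\odot\mathfrak{p}$, so the two projective classes coincide.

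There is no real obstacle here: once one commits to the identification $\phi_\infty \leftrightarrow (T \mapsto GTG)$ the whole statement reduces to inverting a $2\times 2$ matrix and matching coefficients. The only delicate point is bookkeeping between the ``symmetric product'' and ``matrix'' normalisations (the factors of $2$ coming from $p\odot q = \tfrac{1}{2}(p\otimes q + q\otimes p)$), which must be handled consistently in order for the coefficient match with $W$ to come out correctly.
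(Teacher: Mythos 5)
Your proposal is correct and follows essentially the same route as the paper: invoke Proposition~\ref{prop:weinmid} and Corollary~\ref{cor:weinmidpoly} for the two explicit linear conserved quantities, compute the Gram data (the paper gets $\Delta=-4(b^{2}-ac)$, matching your $\det[g_{\infty}]$), and identify $\phi_{0}\circ\phi_{\infty}^{-1}(g_{\infty})$ with a nonzero multiple of $W$ by inverting the induced $2\times 2$ form. Your matrix formulation $\phi_{\infty}\leftrightarrow(T\mapsto GTG)$ is just a repackaging of the paper's explicit adjugate formula for $\phi_{\infty}^{-1}g_{\infty}$, and your coefficient bookkeeping with $\odot$ comes out the same.
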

\begin{proof}
By Proposition~\ref{prop:weinmid}, $f$ is an $\Omega$-surface and by Corollary~\ref{cor:weinmidpoly}, $P:=\langle p,q\rangle$ is a 2-dimensional space of linear conserved quantities for $d+t\eta^{mid}$, where 
\[
p(t) := \mathfrak{p}+ t(-b\mathfrak{f}+a\mathfrak{t})\quad\text{and}\quad
q(t) := \mathfrak{q} +t(c\mathfrak{f} - b \mathfrak{t}).
\] 
Since $\mathfrak{p}$ is a point sphere complex, i.e., $|\mathfrak{p}|^{2}\neq 0$, we have that $g_{0}\neq 0$. We also have that 
\[ \Delta:=g_{\infty}(p,p)g_{\infty}(q,q) - g_{\infty}(q,p)^{2} = -4(b^{2}-ac).\]
Therefore $g_{\infty}$ is non-degenerate and 
\[ \phi_{\infty}^{-1}g_{\infty} = \Delta^{-1}(g_{\infty}(p,p)q\odot q - 2\,g_{\infty}(p,q) q\odot p + g_{\infty}(q,q)p\odot p).\]  
Thus, 
\begin{align*}
\phi_{0}(\phi_{\infty}^{-1}g_{\infty}) &= \Delta^{-1}(g_{\infty}(p,p)q(0)\odot q(0) - 2\,g_{\infty}(p,q) q(0)\odot p(0) + g_{\infty}(q,q)p(0)\odot p(0))\\
&= \Delta^{-1}( - 2a\, \mathfrak{q}\odot \mathfrak{q} - 4b\, \mathfrak{q}\odot\mathfrak{p} - 2c\,\mathfrak{p}\odot \mathfrak{p})\\
&= -2\Delta^{-1}( a\, \mathfrak{q}\odot \mathfrak{q} +2b\, \mathfrak{q}\odot\mathfrak{p} +c\,\mathfrak{p}\odot \mathfrak{p}).
\end{align*}
Hence, $[W] = [\phi_{0}(\phi_{\infty}^{-1}g_{\infty})]$.
\end{proof}

\begin{remark}
It follows from the proof of Proposition~\ref{prop:lwlinspace} that if $b^{2}-ac>0$ then $g_{\infty}$ is indefinite and if $b^{2}-ac<0$ then $g_{\infty}$ is definite. Then it follows by Proposition~\ref{prop:weinmid} that the isothermic sphere congruences are real when $g_{\infty}$ is indefinite and complex conjugate when $g_{\infty}$ is definite. 
\end{remark}

We now seek a converse to Proposition~\ref{prop:lwlinspace}. Firstly we have the following technical lemma that gives conditions for our $\Omega$-surface to project to a well-defined map in certain space forms, i.e., so that our point sphere map does not have points at infinity:

\begin{lemma}
\label{lem:linwspf}
Suppose that $\mathfrak{q},\mathfrak{p}\in P(0)$ are a space form vector and point sphere complex for a space form $\mathfrak{Q}^{3}$. Then $f$ defines a point sphere map $\mathfrak{f}:\Sigma\to \mathfrak{Q}^{3}$ with tangent plane congruence $\mathfrak{t}:\Sigma\to \mathfrak{P}^{3}$ if and only if $g_{\infty}$ is non-degenerate. 
\end{lemma}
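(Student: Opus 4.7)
The plan is to reduce the problem to a pointwise identity relating $\det g_{\infty}$ to the quantity $(\tau\mathfrak{p},\mathfrak{q})$ appearing in Lemma~\ref{lem:spfwedge}. That lemma tells us $\mathfrak{f}$ is well-defined at a point exactly when $(\tau\mathfrak{p},\mathfrak{q})\neq 0$ for any nowhere-zero local section $\tau\in\Gamma(\wedge^{2}f)$. Because $\tau$ is skew-symmetric as an endomorphism, $(\tau\mathfrak{q},\mathfrak{p}) = -(\tau\mathfrak{p},\mathfrak{q})$, so the tangent plane congruence $\mathfrak{t}$ is defined under the same condition. Thus it suffices to show that $g_\infty$ is non-degenerate if and only if $(\tau\mathfrak{p},\mathfrak{q})$ is nowhere zero on $\Sigma$.

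To implement this I first identify $P$ with $P(0)$. Since $f$ is umbilic-free, the evaluation $P\ni\alpha\mapsto\alpha(0)=\alpha_{0}$ is injective: any $\alpha\in P$ with $\alpha_{0}=0$ would force $\alpha_{1}$ constant and lying in $\Gamma f$, hence a totally umbilic $f$ by Lemma~\ref{lem:umbilic}. Therefore there exist unique $p,q\in P$ with $p(0)=\mathfrak{p}$ and $q(0)=\mathfrak{q}$. Using Proposition~\ref{prop:polymid}\ref{item:polytop} applied to the Christoffel dual lifts $\sigma^{\pm}$, the top terms take the explicit form
\[
p_{1}=-\tfrac{1}{2}\bigl((\sigma^{+},\mathfrak{p})\sigma^{-}+(\sigma^{-},\mathfrak{p})\sigma^{+}\bigr),
\]
and analogously for $q_{1}$.

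Substituting into $g_{\infty}(\alpha,\beta)=(\alpha_{0},\beta_{1})+(\alpha_{1},\beta_{0})$ and using $(\sigma^{\pm},\sigma^{\pm})=0$, I obtain
\[
g_{\infty}(p,p)=-2(\sigma^{+},\mathfrak{p})(\sigma^{-},\mathfrak{p}),\qquad g_{\infty}(q,q)=-2(\sigma^{+},\mathfrak{q})(\sigma^{-},\mathfrak{q}),
\]
\[
g_{\infty}(p,q)=-\bigl((\sigma^{+},\mathfrak{p})(\sigma^{-},\mathfrak{q})+(\sigma^{-},\mathfrak{p})(\sigma^{+},\mathfrak{q})\bigr).
\]
Expanding the $2\times 2$ Gram determinant, the cross terms combine into a perfect square, giving
\[
\det g_{\infty}=-\bigl((\sigma^{+},\mathfrak{p})(\sigma^{-},\mathfrak{q})-(\sigma^{-},\mathfrak{p})(\sigma^{+},\mathfrak{q})\bigr)^{2}.
\]
Taking $\tau:=\sigma^{+}\wedge\sigma^{-}$, the quantity in parentheses equals exactly $(\tau\mathfrak{p},\mathfrak{q})$, so $\det g_{\infty}=-(\tau\mathfrak{p},\mathfrak{q})^{2}$. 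Since $\sigma^{+}\wedge\sigma^{-}$ is nowhere zero on the umbilic-free $\Omega$-surface, non-degeneracy of $g_{\infty}$ is equivalent to $(\tau\mathfrak{p},\mathfrak{q})\neq 0$ everywhere on $\Sigma$, which by Lemma~\ref{lem:spfwedge} is equivalent to $\mathfrak{f}$ (and hence $\mathfrak{t}$) being defined on all of $\Sigma$.

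The main obstacle is essentially bookkeeping: correctly unwinding $\eta^{mid}$ via~(\ref{eqn:midchris}) to obtain the explicit formula for $p_1$, and then carrying out the Gram-determinant identity. A mild subtlety occurs when the isothermic sphere congruences are complex conjugate rather than real; then $\sigma^{+}\wedge\sigma^{-}$ is imaginary and one works with the real bivector $(\sigma^{+}\wedge\sigma^{-})/i$, but the identity $\det g_{\infty}=-(\tau\mathfrak{p},\mathfrak{q})^{2}$ retains the correct sign---yielding a non-negative determinant consistent with $g_{\infty}$ being definite in that case, as noted after Proposition~\ref{prop:lwlinspace}.
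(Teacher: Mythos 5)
Your argument is correct and is essentially the paper's own proof: you compute the same expressions for $g_{\infty}(p,p)$, $g_{\infty}(q,q)$, $g_{\infty}(p,q)$ from the Christoffel dual lifts, obtain the same Gram-determinant identity $\det g_{\infty}=-\bigl((\sigma^{+}\wedge\sigma^{-})\mathfrak{p},\mathfrak{q}\bigr)^{2}$, and conclude via Lemma~\ref{lem:spfwedge}. The only differences are cosmetic --- you make explicit the skew-symmetry remark giving $\mathfrak{t}$ for free and the injectivity of evaluation at $t=0$ (which the paper gets from the preceding rank-2 lemma and Corollary~\ref{cor:legperp}), and you flag the complex-conjugate case, none of which changes the route.
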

\begin{proof}
Let $p,q\in P$ such that 
\[ p(t) =  \exp(-t\, \sigma^{+}\odot \sigma^{-})\mathfrak{p} \quad \text{and}\quad q(t)=\exp(-t\, \sigma^{+}\odot \sigma^{-})\mathfrak{q}.\]
Then 
\begin{align*} g_{\infty}(p,p) &= -2(\sigma^{+},\mathfrak{p})(\sigma^{-},\mathfrak{p}),\\g_{\infty}(q,q) &= -2(\sigma^{+},\mathfrak{q})(\sigma^{-},\mathfrak{q}), \quad \text{and}\\
g_{\infty}(p,q) &= -(\sigma^{+},\mathfrak{p})(\sigma^{-},\mathfrak{q}) - (\sigma^{+},\mathfrak{q})(\sigma^{-},\mathfrak{p}).
\end{align*}
One can then deduce that 
\begin{align*} 
g_{\infty}(p,p)g_{\infty}(q,q)-g_{\infty}(p,q)^{2} &= - ((\sigma^{+},\mathfrak{p})(\sigma^{-},\mathfrak{q}) - (\sigma^{+},\mathfrak{q})(\sigma^{-},\mathfrak{p}))^{2} \\
&= - ((\sigma^{+}\wedge \sigma^{-})\mathfrak{p},\mathfrak{q})^{2}.
\end{align*}
By Corollary~\ref{cor:legperp}, $f$ lies nowhere in $\langle \mathfrak{q}\rangle^{\perp}$ or $\langle \mathfrak{p}\rangle^{\perp}$. It then follows by Lemma~\ref{lem:spfwedge} that $f$ defines a point sphere map $\mathfrak{f}$ and tangent plane congruence $\mathfrak{t}$ if and only if $g_{\infty}$ is non-degenerate.
\end{proof}

We are now in a position to state the following proposition:

\begin{proposition}
\label{prop:olinspace}
Suppose that $f$ is an umbilic-free $\Omega$-surface whose middle pencil admits a 2-dimensional space of linear conserved quantities $P$, such that $g_{0}\neq 0$ and $g_{\infty}$ is non-degenerate. Then $f$ projects to a non-tubular linear Weingarten surface with 
\[ [W]= [\phi_{0}\circ\phi^{-1}_{\infty}(g_{\infty})],\]
where it immerses, in any space form determined by space form vector and point sphere complex $\mathfrak{q},\mathfrak{p}\in P(0)$. 
\end{proposition}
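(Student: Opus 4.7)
The plan is to reverse the argument of Proposition~\ref{prop:lwlinspace}. I would choose the unique $p,q\in P$ with $p(0)=\mathfrak{p}$ and $q(0)=\mathfrak{q}$; by Lemma~\ref{lem:linwspf}, the non-degeneracy of $g_{\infty}$ ensures that $f$ admits a space form projection $\mathfrak{f}:\Sigma\to\mathfrak{Q}^{3}$ with tangent plane congruence $\mathfrak{t}:\Sigma\to\mathfrak{P}^{3}$. By~\ref{item:polytop} of Proposition~\ref{prop:polymid}, the leading coefficients $p_{1},q_{1}$ lie in $\Gamma f$, so I may write $p_{1}=A\mathfrak{f}+B\mathfrak{t}$ and $q_{1}=C\mathfrak{f}+D\mathfrak{t}$ for smooth functions $A,B,C,D$ on $\Sigma$.

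First, since $d+t\eta^{mid}$ is a metric connection and $p(t),q(t)$ are parallel, the pencil $g_{t}$ of bilinear forms on $P$ is constant on $\Sigma$. Reading off $g_{\infty}(p,p)=-2B$, $g_{\infty}(q,q)=-2C$ and $g_{\infty}(p,q)=-(A+D)$ using the normalisations $(\mathfrak{p},\mathfrak{t})=-1=(\mathfrak{q},\mathfrak{f})$ (the other relevant inner products vanishing), this gives that $B$, $C$ and $A+D$ are real constants, with $4BC-(A+D)^{2}\neq 0$ by non-degeneracy of $g_{\infty}$.

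Next I would determine $\eta^{mid}$ explicitly. The order-$t$ part of the parallel-section equation yields $\eta^{mid}\mathfrak{p}=-dp_{1}$ and $\eta^{mid}\mathfrak{q}=-dq_{1}$. The key observation is that the evaluation map $\omega\mapsto(\omega\mathfrak{p},\omega\mathfrak{q})$ is injective on $f\wedge f^{\perp}$: using skew-symmetry together with $f\cap\langle\mathfrak{p},\mathfrak{q}\rangle^{\perp}=\{0\}$ and $\mathbb{R}^{4,2}=f^{\perp}\oplus\langle\mathfrak{p},\mathfrak{q}\rangle$, the vanishing of both components forces $\omega=0$. A direct verification then produces
\[
\eta^{mid}=C\,\mathfrak{f}\wedge d\mathfrak{f}+D\,\mathfrak{f}\wedge d\mathfrak{t}+A\,\mathfrak{t}\wedge d\mathfrak{f}+B\,\mathfrak{t}\wedge d\mathfrak{t}-dA\wedge(\mathfrak{f}\wedge\mathfrak{t}),
\]
the correction term (consistent because $d(A+D)=0$) absorbing any residual non-constancy of $A$ and $D$.

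Finally, the linear Weingarten equation will drop out of the closure condition $d\eta^{mid}=0$. Expanding and evaluating on $(X,Y)\in T_{1}\times T_{2}$ via Rodrigues' equations $d_{i}\mathfrak{t}=-\kappa_{i}d_{i}\mathfrak{f}$, the result splits as a sum of terms in three linearly independent subspaces of $\wedge^{2}\mathbb{R}^{4,2}$: a multiple of $d_{X}\mathfrak{f}\wedge d_{Y}\mathfrak{f}$ and multiples of $\sigma_{1}\wedge d_{X}\mathfrak{f}$ and $\sigma_{2}\wedge d_{Y}\mathfrak{f}$. Vanishing of the latter two forces $dA=0$, so $A$ and hence $D$ are constants, while vanishing of the first yields $BK-(A+D)H+C=0$, which is the linear Weingarten equation with $(a,b,c)=(B,-(A+D)/2,C)$. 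Non-triviality and non-tubularity follow from $g_{\infty}\neq 0$ and $b^{2}-ac=\tfrac{1}{4}((A+D)^{2}-4BC)\neq 0$, respectively, and the identification $[W]=[\phi_{0}\circ\phi_{\infty}^{-1}(g_{\infty})]$ is the same calculation as in the proof of Proposition~\ref{prop:lwlinspace}. The hardest step will be the explicit derivation of the form of $\eta^{mid}$ via the injectivity argument and the clean separation of $d\eta^{mid}$ into independent components.
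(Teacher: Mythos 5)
Your overall strategy is sound and does reach the stated conclusion, but it is a genuinely different route from the paper's, and it contains one incorrect intermediate claim (which happens to be harmless). The paper never reconstructs $\eta^{mid}$ explicitly: it invokes Lemma~\ref{lem:lincon} (i.e.\ part~\ref{item:polytop} of Proposition~\ref{prop:polymid}) to write $p_{1}=-(\sigma^{+}\odot\sigma^{-})\mathfrak{p}$ and $q_{1}=-(\sigma^{+}\odot\sigma^{-})\mathfrak{q}$ with $\sigma^{\pm}=\lambda^{\pm}\mathfrak{f}+\mu^{\pm}\mathfrak{t}$ for \emph{constant} (possibly complex) $\lambda^{\pm},\mu^{\pm}$, so that $p,q$ assume exactly the form of Corollary~\ref{cor:weinmidpoly}, whose ``if'' direction (where the closedness computation already lives, via Proposition~\ref{prop:weinmid}) gives the linear Weingarten condition at once; in particular this route also yields $A=D$ constant in your notation. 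You instead recover $\eta^{mid}$ from the order-$t$ parallel equations via the injectivity of $\omega\mapsto(\omega\mathfrak{p},\omega\mathfrak{q})$ on $f\wedge f^{\perp}$ --- that observation is correct (as are your formulas $g_{\infty}(p,p)=-2B$, $g_{\infty}(q,q)=-2C$, $g_{\infty}(p,q)=-(A+D)$ and the constancy of $B,C,A+D$) --- and then redo the closedness computation by hand; this trades the structural input $p_{1}=-(\sigma^{+}\odot\sigma^{-})p_{0}$ for a direct linear-algebra argument, at the cost of repeating work the paper has packaged into Proposition~\ref{prop:weinmid} and Corollary~\ref{cor:weinmidpoly}.

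The flawed step is the assertion that closedness forces $dA=0$. If you carry out the computation with
\[
\eta^{mid}=C\,\mathfrak{f}\wedge d\mathfrak{f}+D\,\mathfrak{f}\wedge d\mathfrak{t}+A\,\mathfrak{t}\wedge d\mathfrak{f}+B\,\mathfrak{t}\wedge d\mathfrak{t}-dA\otimes(\mathfrak{f}\wedge\mathfrak{t}),
\qquad dB=dC=d(A+D)=0,
\]
and evaluate on $X\in\Gamma T_{1}$, $Y\in\Gamma T_{2}$ using Rodrigues' equations, you find that the $\mathfrak{t}\wedge d_{X}\mathfrak{f}$ and $\mathfrak{t}\wedge d_{Y}\mathfrak{f}$ terms produced by $A\,\mathfrak{t}\wedge d\mathfrak{f}$ are cancelled identically by those from $-dA\otimes(\mathfrak{f}\wedge\mathfrak{t})$, and the $\mathfrak{f}\wedge d_{X}\mathfrak{f}$, $\mathfrak{f}\wedge d_{Y}\mathfrak{f}$ terms from $D\,\mathfrak{f}\wedge d\mathfrak{t}$ cancel against the remaining terms of the correction (using $dD=-dA$), leaving
\[
d\eta^{mid}(X,Y)=2\bigl(BK-(A+D)H+C\bigr)\,d_{X}\mathfrak{f}\wedge d_{Y}\mathfrak{f}
\]
identically; so closedness delivers exactly the Weingarten equation and \emph{no} constraint on $dA$. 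Fortunately nothing in your conclusion needs $dA=0$: the coefficients $(a,b,c)=(B,-(A+D)/2,C)$ are already constants by your metric-connection argument, they are not all zero and satisfy $b^{2}-ac=-\tfrac{1}{4}\det g_{\infty}\neq 0$, and the identification $[W]=[\phi_{0}\circ\phi_{\infty}^{-1}(g_{\infty})]$ involves only these. (That $A$ and $D$ are in fact constant and equal is true, but it follows from part~\ref{item:polytop} of Proposition~\ref{prop:polymid}, not from closedness.) With that one sentence corrected or deleted, your proof is complete.
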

\begin{proof}
Since $g_{0}\neq 0$ we may choose a space form vector $\mathfrak{q}$ and point sphere complex $\mathfrak{p}$ for a space form $\mathfrak{Q}^{3}$ from $P(0)$. By Lemma~\ref{lem:linwspf}, since $g_{\infty}$ is non-degenerate, $f$ projects to a point sphere map $\mathfrak{f}:\Sigma\to \mathfrak{Q}^{3}$ with tangent plane congruence $\mathfrak{t}:\Sigma\to \mathfrak{P}^{3}$.

Now we may choose $p,q\in P$ such that $p(0)=\mathfrak{p}$ and $q(0)=\mathfrak{q}$. By Lemma~\ref{lem:lincon}, for certain Christoffel dual lifts $\sigma^{\pm}$, $(\sigma^{\pm},\mathfrak{q})$ and $(\sigma^{\pm},\mathfrak{p})$ are constant and  
\[ p(t) = \mathfrak{p} + t (\sigma^{+}\odot \sigma^{-})\mathfrak{p} \quad \text{and}\quad 
q(t) = \mathfrak{q} + t (\sigma^{+}\odot \sigma^{-})\mathfrak{q}.\]
Therefore, there exists constants (possibly complex) $\lambda^{\pm}$ and $\mu^{\pm}$ such that 
\[ \sigma^{\pm} = \lambda^{\pm}\mathfrak{f} + \mu^{\pm}\mathfrak{t}\]
and 
\begin{align*}
p(t) &= \mathfrak{p} + t(\tfrac{1}{2}(\mu^{+}\lambda^{-} +\mu^{-}\lambda^{+})\mathfrak{f} +\mu^{+}\mu^{-}\mathfrak{t})\quad \text{and}\\
q(t) &= \mathfrak{q} + t( \lambda^{+}\lambda^{-}\mathfrak{f} +\tfrac{1}{2}(\mu^{+}\lambda^{-} +\mu^{-}\lambda^{+})\mathfrak{t}).
\end{align*}
Then, by Corollary~\ref{cor:weinmidpoly}, where it immerses, $\mathfrak{f}$ is a linear Weingarten surface satisfying~(\ref{eqn:linwein}) with 
\[ a := \mu^{+}\mu^{-}, \quad b:=- \frac{1}{2}(\mu^{+}\lambda^{-} +\mu^{-}\lambda^{+}) \quad \text{and}\quad c:= \lambda^{+}\lambda^{-}.\]
On the other hand
\[ a = -\tfrac{1}{2}(p,p)_{\infty},\quad b:= \tfrac{1}{2}(p,q)_{\infty} \quad \text{and}\quad c:= -\tfrac{1}{2}(q,q)_{\infty}.\]
Thus, 
\begin{align*}
W &= a\,\mathfrak{q}\odot\mathfrak{q} + 2b\, \mathfrak{q}\odot \mathfrak{p} + c\,\mathfrak{p}\odot \mathfrak{p} \\
&= - \tfrac{1}{2} ((p,p)_{\infty}q(0)\odot q(0) - 2(p,q)_{\infty} q(0)\odot p(0) + (q,q)_{\infty}p(0)\odot p(0))\\
&= - \tfrac{\Delta}{2}(\phi_{0}(\phi_{\infty}^{-1}g_{\infty})),
\end{align*}
where $\Delta := (p,p)_{\infty}(q,q)_{\infty}- (p,q)_{\infty}^{2}$. Furthermore, $b^{2}-ac = \Delta$. Hence, $\mathfrak{f}$ is non-tubular. 
\end{proof}

If $g_{\infty}$ is non-degenerate on $P$ then $g_{\infty}$ induces two null directions on $P$. In the case that $g_{\infty}$ is indefinite these are real directions and in the case that $g_{\infty}$ is definite they are complex conjugate. Let $q^{\pm}$ be two linearly independent vectors in $P\otimes\mathbb{C}$ and define $\mathfrak{q}^{\pm} := q^{\pm}(0) \in\mathbb{R}^{4,2}\otimes \mathbb{C}$. Then 
\[ q^{\pm}(t) = \exp(-t\, \sigma^{+}\odot \sigma^{-})\mathfrak{q}^{\pm}.\]
Thus 
\[ (q^{\pm},q^{\pm})_{\infty} = -2(\sigma^{\pm},\mathfrak{q}^{\pm})(\sigma^{\mp},\mathfrak{q}^{\pm})\]
and 
\[ (q^{+},q^{-})_{\infty} = -(\sigma^{+},\mathfrak{q}^{+})(\sigma^{-},\mathfrak{q}^{-})- (\sigma^{+},\mathfrak{q}^{-})(\sigma^{-},\mathfrak{q}^{+}).\]
Therefore, $q^{\pm}$ are null with respect to $(\, ,\,)_{\infty}$ if and only if we have (after possibly switching $q^{\pm}$) that $(\sigma^{\pm},\mathfrak{q}^{\pm})=0$, i.e., the isothermic sphere congruences $s^{\pm}$ take values in $\langle \mathfrak{q}^{\pm}\rangle^{\perp}$. Now by applying Proposition~\ref{prop:lwlinspace} and Proposition~\ref{prop:olinspace} we obtain the main result of~\cite{BHR2012}:

\begin{theorem}
Non-tubular linear Weingarten surfaces in space forms are those $\Omega$-surfaces whose isothermic sphere congruences each take values in a linear sphere complex. 
\end{theorem}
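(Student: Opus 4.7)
The plan is to combine Propositions \ref{prop:lwlinspace} and \ref{prop:olinspace} with the null-direction analysis of $g_\infty$ sketched in the paragraph immediately preceding the theorem. Those two propositions together yield the equivalence: $\mathfrak{f}$ is a non-tubular linear Weingarten surface in a space form (with vectors $\mathfrak{q},\mathfrak{p}\in P(0)$) if and only if $f$ is an umbilic-free $\Omega$-surface whose middle pencil admits a 2-dimensional space $P$ of linear conserved quantities with $g_0\neq 0$ and $g_\infty$ non-degenerate. Hence, after applying those propositions, all that remains is to translate the non-degeneracy of $g_\infty$ into the geometric condition on the isothermic sphere congruences.

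For the forward direction, I would take the 2-dimensional space $P$ granted by Proposition~\ref{prop:lwlinspace} and, since $g_\infty$ is non-degenerate, extract two distinct null directions $\langle q^\pm\rangle$ in $P\otimes\mathbb{C}$; these are real when $g_\infty$ is indefinite and complex conjugate when definite. Setting $\mathfrak{q}^\pm:=q^\pm(0)$, Lemma~\ref{lem:lincon} gives $q^\pm(t)=\exp(-t\,\sigma^+\odot\sigma^-)\mathfrak{q}^\pm$ for the Christoffel dual lifts $\sigma^\pm$, which yields the direct computation
\[
g_\infty(q^\pm,q^\pm)=-2(\sigma^+,\mathfrak{q}^\pm)(\sigma^-,\mathfrak{q}^\pm).
\]
Since both are null, we obtain $(\sigma^{\epsilon^\pm},\mathfrak{q}^\pm)=0$ for some choice of signs $\epsilon^\pm$; after relabelling $q^+\leftrightarrow q^-$ if necessary, the two choices must land on different factors (otherwise $\sigma^+$ or $\sigma^-$ would be orthogonal to the whole of $P(0)$, contradicting that $g_\infty$ is non-degenerate on $P$), giving $(\sigma^\pm,\mathfrak{q}^\pm)=0$, i.e.\ $s^\pm\leq\langle\mathfrak{q}^\pm\rangle^\perp$.

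For the converse, I would assume $s^\pm\leq\langle\mathfrak{q}^\pm\rangle^\perp$ for constant $\mathfrak{q}^\pm\in\mathbb{R}^{4,2}\otimes\mathbb{C}$ and build $q^\pm(t):=\exp(-t\,\sigma^+\odot\sigma^-)\mathfrak{q}^\pm$. To invoke Lemma~\ref{lem:lincon} I need both inner products $(\sigma^\pm,\mathfrak{q}^\mp)$ to be constant; here the Christoffel rescaling freedom $\sigma^\pm\mapsto c^{\pm 1}\sigma^\pm$ together with $(\sigma^\pm,\mathfrak{q}^\pm)=0$ and the form~(\ref{eqn:midchris}) of $\eta^{mid}$ force $d(\sigma^\pm,\mathfrak{q}^\mp)=0$, so $q^\pm$ are indeed linear conserved quantities. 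The span $P:=\langle q^+,q^-\rangle$ is 2-dimensional (as $\mathfrak{q}^+\neq\mathfrak{q}^-$ modulo a scalar, since they lie in distinct sphere complexes), the same computation above shows $g_\infty$ has two distinct null directions and is therefore non-degenerate, and $g_0\neq 0$ follows from $\mathfrak{q}^\pm$ being genuine sphere complexes in $\mathbb{R}^{4,2}$. Proposition~\ref{prop:olinspace} then delivers the non-tubular linear Weingarten projection. The main obstacle is the reverse direction, specifically verifying that the cross inner products $(\sigma^\pm,\mathfrak{q}^\mp)$ are forced to be constant once each isothermic sphere congruence lies in its own linear sphere complex; this is the one step that is not immediate from the forward computation and requires careful use of the closure of $\eta^{mid}$ together with the Christoffel dual normalisation.
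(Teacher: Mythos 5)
Your route is the paper's own: Propositions~\ref{prop:lwlinspace} and~\ref{prop:olinspace} plus the computation of $g_{\infty}$ on a basis $q^{\pm}$ of $P\otimes\mathbb{C}$, and your forward direction is sound --- you even make explicit the ``after possibly switching'' step (if both zeros landed on the same factor, $\sigma^{+}$ or $\sigma^{-}$ would be orthogonal to all of $P(0)$ and $g_{\infty}$ would vanish), which the paper leaves implicit.

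The genuine gap is in the converse, and the mechanism you propose for it is off target: the constancy of $(\sigma^{\mp},\mathfrak{q}^{\pm})$ has nothing to do with the reciprocal rescaling freedom of the Christoffel dual lifts (rescaling $\sigma^{\pm}\mapsto c^{\pm1}\sigma^{\pm}$ only rescales these functions and cannot make a non-constant one constant). The correct argument is the one already used for isothermic surfaces in Subsection~\ref{subsec:isosurf}: besides $\eta^{+}=\sigma^{+}\wedge d\sigma^{-}$ one has $\eta^{+}=(\sigma_{1}+\epsilon\sigma_{2})\wedge\star d(\sigma_{1}+\epsilon\sigma_{2})$, so $\eta^{+}$ takes values in $s^{+}\wedge (s^{+})^{(1)}$; since $s^{+}\le\langle\mathfrak{q}^{+}\rangle^{\perp}$ with $\mathfrak{q}^{+}$ constant, also $(s^{+})^{(1)}\le\langle\mathfrak{q}^{+}\rangle^{\perp}$ and hence $\eta^{+}\mathfrak{q}^{+}=0$. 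Comparing with the Christoffel-dual expression, $0=\eta^{+}\mathfrak{q}^{+}=-(d\sigma^{-},\mathfrak{q}^{+})\sigma^{+}$, i.e.\ $d(\sigma^{-},\mathfrak{q}^{+})=0$; equivalently, $\mathfrak{q}^{+}$ is a constant conserved quantity of $d+t\eta^{+}$, and gauging to the middle pencil via Lemma~\ref{lem:flatgorb} produces the linear conserved quantity $\exp(t\tau^{+})\mathfrak{q}^{+}$, after which Proposition~\ref{prop:polymid} gives the constancy you need. Two further points: ``two distinct null directions, therefore non-degenerate'' is not a valid inference ($g_{\infty}\equiv0$ has every direction null); you need $(\sigma^{+},\mathfrak{q}^{-})(\sigma^{-},\mathfrak{q}^{+})\neq0$, which holds because if, say, $(\sigma^{+},\mathfrak{q}^{-})=0$ then together with $(\sigma^{-},\mathfrak{q}^{-})=0$ the constant vector $\mathfrak{q}^{-}$ would be orthogonal to $f$ and hence to $f^{(1)}=f^{\perp}$, forcing $\mathfrak{q}^{-}\in\Gamma f$ and contradicting umbilic-freeness by Lemma~\ref{lem:umbilic}. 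Finally, in the definite case $P$ must be taken as the real span of the real and imaginary parts of $q^{\pm}$, and your appeal to ``genuine sphere complexes'' for $g_{0}\neq0$ is only as good as the paper's own gloss: it does not by itself exclude $\mathfrak{q}^{\pm}$ spanning an isotropic plane, so the non-isotropy of $\langle\mathfrak{q}^{+},\mathfrak{q}^{-}\rangle$ is tacitly assumed there as well.
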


Furthermore, by scaling $q^{\pm}$ appropriately we have that $g_{\infty} =(q^{+}\odot q^{-})_{\infty}$. Therefore, we have that
\[ [W] = [\mathfrak{q}^{+}\odot \mathfrak{q}^{-}],\]
which was shown in~\cite{BHR2014} for the discrete case.

\subsubsection{Tubular linear Weingarten surfaces}
\label{subsec:tublin}
In \cite{BHR2012}, the following theorem is proved:

\begin{theorem}
\label{thm:tublinwein}
Tubular linear Weingarten surfaces in space forms are those $\Omega_{0}$-surfaces whose isothermic curvature sphere congruence takes values in a linear sphere complex. 
\end{theorem}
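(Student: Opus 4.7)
The plan is to read the forward direction straight off Proposition~\ref{prop:weinmid}, and to handle the converse by reverse-engineering a space form from the prescribed linear sphere complex.

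For the forward direction, assume $\mathfrak{f}$ is tubular linear Weingarten with $aK+2bH+c=0$ and $b^{2}=ac$. Proposition~\ref{prop:weinmid} already provides the middle potential and asserts that $f$ is an $\Omega_{0}$-surface (since $q$ is degenerate precisely when $b^{2}-ac=0$). The tubular condition forces $a\neq 0$: else $b^{2}=ac=0$ gives $b=0$, and then the equation forces $c=0$ too. So we may factorise
\[
aK+2bH+c = a(\kappa_{1}+b/a)(\kappa_{2}+b/a),
\]
showing that one principal curvature equals the constant $-b/a$. Swapping $T_{1}$ and $T_{2}$ if needed, take $\kappa_{1}=-b/a$; then $s_{1}=\langle \mathfrak{t}+\kappa_{1}\mathfrak{f}\rangle$ is the isothermic curvature sphere, and a one-line pairing using the standard duality $(\mathfrak{p},\mathfrak{t})=(\mathfrak{q},\mathfrak{f})=-1$, $(\mathfrak{p},\mathfrak{f})=(\mathfrak{q},\mathfrak{t})=0$ gives $(\mathfrak{t}+\kappa_{1}\mathfrak{f},\,b\mathfrak{p}+a\mathfrak{q})=-b-a\kappa_{1}=0$. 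Thus $s_{1}$ takes values in the linear sphere complex $\langle b\mathfrak{p}+a\mathfrak{q}\rangle^{\perp}$.

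For the converse, suppose $f$ is an umbilic-free $\Omega_{0}$-surface whose isothermic curvature sphere $s_{1}$ lies in $\langle \mathfrak{q}^{\star}\rangle^{\perp}$ for a nonzero constant $\mathfrak{q}^{\star}\in\mathbb{R}^{4,2}$. I would pick a point sphere complex $\mathfrak{p}$ with $|\mathfrak{p}|^{2}=\pm 1$ that is not proportional to $\mathfrak{q}^{\star}$, set $b:=-(\mathfrak{q}^{\star},\mathfrak{p})/|\mathfrak{p}|^{2}$, and define $\mathfrak{q}:=\mathfrak{q}^{\star}-b\mathfrak{p}$. By construction $\mathfrak{q}\neq 0$, $(\mathfrak{p},\mathfrak{q})=0$, and $\mathfrak{q}^{\star}=b\mathfrak{p}+\mathfrak{q}$. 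Lemma~\ref{lem:totumb} then guarantees that $f$ admits a space form projection $\mathfrak{f}:\Sigma\to\mathfrak{Q}^{3}$ with tangent plane congruence $\mathfrak{t}$ on a dense open subset. On this subset, $s_{1}=\langle \mathfrak{t}+\kappa_{1}\mathfrak{f}\rangle$ and the hypothesis $s_{1}\perp \mathfrak{q}^{\star}$ rewrites as $-b-\kappa_{1}=0$, so $\kappa_{1}\equiv -b$ is constant. Setting $c:=b^{2}$ produces $K+2bH+c=(\kappa_{1}+b)(\kappa_{2}+b)=0$, which is linear Weingarten with $b^{2}-c=0$; hence $\mathfrak{f}$ is a tubular linear Weingarten surface in $\mathfrak{Q}^{3}$.

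The only mild obstacle is the choice of $\mathfrak{p}$, which must be non-null, not parallel to $\mathfrak{q}^{\star}$, and such that $\mathfrak{f}$ is well-defined. The first two requirements exclude only a closed nowhere-dense subset of $\mathbb{R}^{4,2}$, and by Lemma~\ref{lem:totumb} the locus where $f\le \langle \mathfrak{p}\rangle^{\perp}$ is either empty (timelike $\mathfrak{p}$) or has empty interior (spacelike $\mathfrak{p}$); since the linear Weingarten condition is imposed wherever $\mathfrak{f}$ immerses, this is sufficient.
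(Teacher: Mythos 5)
Your route differs from the paper's: for the forward direction the paper builds the constant conserved quantity $\mathfrak{q}_{0}=c\,p(t)+b\,q(t)=c\mathfrak{p}+b\mathfrak{q}$ from Corollary~\ref{cor:weinmidpoly} and reads $(\sigma_{1},\mathfrak{q}_{0})=0$ directly off the equation $\eta^{mid}\mathfrak{q}_{0}=0$ with $\eta^{mid}=\sigma_{1}\wedge\star d\sigma_{1}$, while for the converse it applies Proposition~\ref{prop:weincurv} to $W=\mathfrak{q}_{0}\odot\mathfrak{q}_{0}$; your factorisation and pairing computations are a legitimate substitute. However, there is a genuine gap in your forward direction: you assert, without argument, that the constant-principal-curvature sphere congruence $s_{1}=\langle\mathfrak{t}+\kappa_{1}\mathfrak{f}\rangle$, $\kappa_{1}\equiv -b/a$, \emph{is} the isothermic curvature sphere congruence of the $\Omega_{0}$-structure supplied by Proposition~\ref{prop:weinmid}. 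That identification is exactly what the theorem claims beyond ``some curvature sphere congruence lies in a linear sphere complex'', and it is the point the paper's argument is designed to capture, since $\eta^{mid}\mathfrak{q}_{0}=0$ involves the isothermic sphere by the very definition of the middle potential of an $\Omega_{0}$-surface. You can close the gap with material already in the proof of Proposition~\ref{prop:weinmid}: modulo $\Omega^{1}(\wedge^{2}f)$ the middle potential equals $\tfrac{1}{\kappa_{1}-\kappa_{2}}\bigl((a\kappa_{1}+b)\,\sigma_{2}\wedge d\sigma_{2}-(a\kappa_{2}+b)\,\sigma_{1}\wedge d\sigma_{1}\bigr)$ with $\sigma_{i}=\mathfrak{t}+\kappa_{i}\mathfrak{f}$, so $a\kappa_{1}+b=0$ leaves only the $\sigma_{1}\wedge d\sigma_{1}$ term; equivalently, the degenerate quadratic differential $q=(\kappa_{1}-\kappa_{2})(a\kappa_{2}+b)(d_{2}\mathfrak{f},d_{2}\mathfrak{f})$ is supported on $(T_{2}^{*})^{2}$, which forces $s_{1}$ to be the isothermic curvature sphere. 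Without this step your forward direction proves a strictly weaker statement.

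Two smaller points in your converse. First, with $b:=-(\mathfrak{q}^{\star},\mathfrak{p})/|\mathfrak{p}|^{2}$ one gets $(\mathfrak{p},\mathfrak{q})=(\mathfrak{q}^{\star},\mathfrak{p})-b\,|\mathfrak{p}|^{2}=2(\mathfrak{q}^{\star},\mathfrak{p})$, not $0$; the coefficient must be $b=(\mathfrak{q}^{\star},\mathfrak{p})/|\mathfrak{p}|^{2}$, and without $(\mathfrak{p},\mathfrak{q})=0$ the normalisations $(\mathfrak{f},\mathfrak{p})=(\mathfrak{t},\mathfrak{q})=0$ used in your pairing are not available. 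Second, Lemma~\ref{lem:totumb} only controls where $f\cap\langle\mathfrak{p}\rangle^{\perp}$ has rank one; existence of the affine projection $\mathfrak{f}:\Sigma\to\mathfrak{Q}^{3}$ with tangent plane congruence additionally requires $\Lambda\not\perp\mathfrak{q}$ (Lemma~\ref{lem:spfwedge}), which can fail on open sets, so that lemma does not by itself yield a dense open subset on which $\mathfrak{f}$ and $\mathfrak{t}$ exist. Since the paper's converse likewise only concludes anything away from $f\perp\langle\mathfrak{q}_{0}\rangle$ and where the projection immerses, it suffices to state your conclusion on the set where $\mathfrak{f}$ is defined, or to argue as the paper does via Proposition~\ref{prop:weincurv}, which needs only the curvature spheres.
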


We shall recover this result in terms of our setup. Suppose that $\mathfrak{f}$ is a tubular linear Weingarten surface satisfying~(\ref{eqn:linwein}), i.e., $b^{2}-ac=0$. Then by Proposition~\ref{prop:weinmid}, $f$ is an $\Omega_{0}$-surface and, by Corollary~\ref{cor:weinmidpoly}, the middle pencil of $f$ admits conserved quantities 
\[
p(t) := \mathfrak{p}+ t(-b\mathfrak{f}+a\mathfrak{t})\quad\text{and}\quad
q(t) := \mathfrak{q} +t(c\mathfrak{f} - b \mathfrak{t}).
\] 
Then 
\[ \mathfrak{q}_{0}:= c\, p(t) + b\, q(t) = c\, \mathfrak{p} + b\, \mathfrak{q} + t( ac-b^{2})\mathfrak{t} = c\, \mathfrak{p} + b\, \mathfrak{q}\]
is a non-zero constant conserved quantity of $d+t\eta^{mid}$. This implies that $\eta^{mid}\mathfrak{q}_{0}=0$. Without loss of generality, assume that the middle potential has the form
\[ \eta^{mid} = \sigma_{1}\wedge \star d\sigma_{1}. \]
Then 
\[ 0 = \eta^{mid}\mathfrak{q}_{0} = (\sigma_{1},\mathfrak{q}_{0}) \star d\sigma_{1} - (\star d\sigma_{1},\mathfrak{q}_{0})\sigma_{1}.\]
Since $f$ is umbilic-free we have that $d_{2}\sigma_{1}$ does not take values in $f$ and thus $(\sigma_{1},\mathfrak{q}_{0}) =0$, i.e., $s_{1}\le \langle \mathfrak{q}_{0}\rangle^{\perp}$. 

Conversely, suppose that $f$ is an umbilic-free Legendre map such that $s_{1}\le \langle \mathfrak{q}_{0}\rangle^{\perp}$. Let $\tilde{\mathfrak{q}}_{0}\in\mathbb{R}^{4,2}$ such that the plane $\langle \mathfrak{q}_{0},\tilde{\mathfrak{q}}_{0}\rangle$ is not totally degenerate. Then let $[W]\in \mathbb{P}(S^{2}\mathbb{R}^{4,2})$ be defined by 
\[ W = \mathfrak{q}_{0}\odot \mathfrak{q}_{0}.\]
Then since $s_{1}\le  \langle\mathfrak{q}_{0}\rangle^{\perp}$ we have that 
\[ [W](s_{1},s_{2})=0.\]
Hence, by Proposition~\ref{prop:weincurv}, away from points where $f\perp \langle\mathfrak{q}_{0}\rangle$, $f$ projects to a linear Weingarten surface, where it immerses, in any space form determined by space form vector and point sphere complex chosen from $\langle\mathfrak{q}_{0},\tilde{\mathfrak{q}}_{0}\rangle$. Furthermore, since the discriminant of $W$ vanishes, such linear Weingarten surfaces are tubular. 

\begin{remark}
Since we assumed that $f$ is umbilic-free, we have that $f\not\perp \langle \mathfrak{q}_{0}\rangle$ on a dense open subset of $\Sigma$, by Lemma~\ref{lem:totumb}.
\end{remark}

\begin{remark}
Notice in the converse argument to Theorem~\ref{thm:tublinwein} that we did not have to assume that $f$ was an $\Omega_{0}$-surface. We can thus deduce that if one of the curvature sphere congruences of a Legendre map takes values in a linear sphere complex then it must be isothermic.
\end{remark}

\subsection{Transformations of linear Weingarten surfaces}
Using the identification of non-tubular linear Weingarten surfaces as certain $\Omega$-surfaces, we will apply the transformations of Subsection~\ref{subsec:trafos} to obtain new linear Weingarten surfaces. 

Let $f$ be an $\Omega$-surface whose middle pencil $d+t\eta^{mid}$ admits a 2-dimensional space of linear conserved quantities $P$, such that $g_{0}\neq 0$ and $g_{\infty}$ is non-degenerate. Then, by Proposition~\ref{prop:olinspace}, $f$ projects to linear Weingarten surfaces with linear Weingarten condition
\[ [W] = [\phi_{0}\circ\phi_{\infty}^{-1}( g_{\infty})],\]
in any space form determined by space form vector and point sphere complex chosen from $P(0)$.

\subsubsection{Calapso transformations}
In~\cite{BHR2012}, the Calapso transformation for $\Omega$-surfaces was used to obtain a Lawson correspondence for linear Weingarten surfaces. This was further investigated in~\cite{BHR2014} in the discrete setting. We shall recover this analysis in terms of linear conserved quantities of the middle pencil. 

Let $t\in\mathbb{R}$ and consider the Calapso transform $f^{t}=T(t)f$ of $f$. For each $p\in P$ we have by Proposition~\ref{prop:calpcq} that $p^{t}$ defined by $p^{t}(s)= T(t)p(t+s)$ is a linear conserved quantity of the middle pencil of $f^{t}$. Therefore, the middle pencil of $f^{t}$ admits a 2-dimensional space of linear conserved quantities $P^{t}$ defined by the isomorphism 
\[ \Psi: P\to P^{t}, \quad p\mapsto p^{t}.\]
As with $P$, we may equip $P^{t}$ with a pencil of metrics $\{ g^{t}_{s}\}_{s\in\mathbb{R}\cup\{\infty\}}$. Then for each $s\in\mathbb{R}$ and $\alpha^{t},\beta^{t} \in P$, 
\[ g^{t}_{s}(\alpha^{t},\beta^{t}) = (T(t)\alpha(t+s),T(t)\beta(t+s)) = (\alpha(t+s),\beta(t+s)) = g_{t+s}(\alpha,\beta),\]
by the orthogonality of $T(t)$. Thus, $\Psi$ is an isometry from $(P,g_{t+s})$ to $(P^{t},g^{t}_{s})$. It is then clear that $\Psi$ is an isometry from $(P,g_{\infty})$ to $(P^{t},g^{t}_{\infty})$. Therefore, $g^{t}_{\infty}$ is non-degenerate, and $g^{t}_{0}\neq 0$ if and only if $g_{t}\neq 0$. 

\begin{proposition}
\label{prop:flatfront}
There exists $t\in\mathbb{R}^{\times}$ such that $g_{t}=0$ if and only if $f$ projects to a flat front in any space form determined by $P(0)$. 
\end{proposition}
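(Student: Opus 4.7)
The plan is to translate both sides of the equivalence into the single intrinsic condition that $g_{\infty}$ is a non-zero scalar multiple of $g_{0}$ on $P$.

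First I would pick a space form vector $\mathfrak{q}$ and point sphere complex $\mathfrak{p}$ in $P(0)$, with $\varepsilon=-|\mathfrak{p}|^{2}$ and $\kappa=-|\mathfrak{q}|^{2}$, and choose a basis $p,q\in P$ with $p(0)=\mathfrak{p}$, $q(0)=\mathfrak{q}$. Using that $(\mathfrak{p},\mathfrak{q})=0$, one reads off
\[ g_{0}(p,p)=|\mathfrak{p}|^{2},\qquad g_{0}(p,q)=0,\qquad g_{0}(q,q)=|\mathfrak{q}|^{2}.\]
Meanwhile, the computation in the proof of Proposition~\ref{prop:olinspace} expresses the linear Weingarten coefficients of $f$ as
\[ a=-\tfrac{1}{2}g_{\infty}(p,p),\qquad b=\tfrac{1}{2}g_{\infty}(p,q),\qquad c=-\tfrac{1}{2}g_{\infty}(q,q).\]

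Next I would rewrite the flat front condition $\varepsilon K+\kappa=0$ as $|\mathfrak{p}|^{2}K+|\mathfrak{q}|^{2}=0$, i.e., the linear Weingarten equation with $(a,b,c)=\lambda(|\mathfrak{p}|^{2},0,|\mathfrak{q}|^{2})$ for some $\lambda\in\mathbb{R}^{\times}$. Comparing with the formulae above, this is exactly the condition $g_{\infty}=-2\lambda g_{0}$ as bilinear forms on $P$. Since proportionality of two bilinear forms is a basis-free statement, this formulation is intrinsic to $P$, so a flat front projection in one space form determined by $P(0)$ automatically yields one in every such space form, matching the ``any space form'' phrasing.

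Finally, $g_{t}=g_{0}+tg_{\infty}=0$ for some $t\in\mathbb{R}^{\times}$ says exactly that $g_{\infty}=-(1/t)g_{0}$, which matches $g_{\infty}=-2\lambda g_{0}$ under $\lambda=1/(2t)$. The hypothesis that $g_{\infty}$ is non-degenerate forces $\lambda\neq 0$, so the equivalence goes through in both directions via $t=1/(2\lambda)$. There is no serious obstacle beyond sign bookkeeping; the only conceptual step is to recognise that the proposition is ultimately a statement about the relationship between two bilinear forms on a 2-dimensional vector space, which is then pinned down using the explicit formulae for $g_{0}$ and for $(a,b,c)$ in terms of $g_{\infty}$.
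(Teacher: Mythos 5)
Your reformulation of the left-hand side is fine, and so is the ``if'' direction of your treatment of the right-hand side: $g_{t}=0$ for some $t\in\mathbb{R}^{\times}$ is indeed the same as $g_{\infty}$ being a non-zero multiple of $g_{0}$, and if $g_{\infty}=-2\lambda g_{0}$ then for \emph{every} orthogonal pair $\mathfrak{q},\mathfrak{p}\in P(0)$ the coefficients $a=-\tfrac12 g_{\infty}(p,p)$, $b=\tfrac12 g_{\infty}(p,q)$, $c=-\tfrac12 g_{\infty}(q,q)$ from the proof of Proposition~\ref{prop:olinspace} become $\lambda(|\mathfrak{p}|^{2},0,|\mathfrak{q}|^{2})$, so the projection satisfies $\varepsilon K+\kappa=0$; this basis-free packaging of the paper's forward computation (where an orthogonal basis for $g_{\infty}$ is chosen instead) is a nice tidying-up.

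The gap is in the step ``flat front, i.e.\ the Weingarten equation with $(a,b,c)=\lambda(|\mathfrak{p}|^{2},0,|\mathfrak{q}|^{2})$ --- comparing with the formulae above, this is exactly $g_{\infty}=-2\lambda g_{0}$''. Knowing that the projection satisfies the flat-front relation does \emph{not} formally force the coefficient triple produced by $P$ to be proportional to $(|\mathfrak{p}|^{2},0,|\mathfrak{q}|^{2})$: a surface can satisfy two linearly independent linear Weingarten relations (a circular cylinder in $\mathbb{R}^{3}$ satisfies both $2H=\mathrm{const}$, with $b^{2}-ac\neq0$, and $K=0$), so ``$\mathfrak{f}$ is a flat front'' and ``$\mathfrak{f}$ satisfies~(\ref{eqn:linwein}) with the $g_{\infty}$-coefficients'' do not by themselves identify the two relations. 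To close this you need the mechanism the paper uses in its converse: Corollary~\ref{cor:weinmidpoly} turns the flat-front relation into explicit linear conserved quantities $\mathfrak{p}+t|\mathfrak{p}|^{2}\mathfrak{t}$ and $\mathfrak{q}+t|\mathfrak{q}|^{2}\mathfrak{f}$ of the middle pencil, and the uniqueness of a linear conserved quantity with prescribed constant term (Lemma~\ref{lem:lincon}, or item (2) of Proposition~\ref{prop:polymid}) identifies these with the elements of $P$ having constant terms $\mathfrak{p},\mathfrak{q}$; only then does the comparison of linear terms give the proportionality (the paper instead just evaluates and finds $g_{1/2}=0$ directly). So your proposal is in spirit the paper's argument, but the converse direction needs this conserved-quantity identification spelled out rather than a bare comparison of Weingarten coefficients.
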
  
\begin{proof}
Since 
\[ g_{t} = g_{0} + tg_{\infty},\]
for each $t\in\mathbb{R}^{\times}$, we have that $g_{t}=0$ if and only if $g_{0}=-tg_{\infty}$. Now let $q,p\in P$ be an orthogonal basis with respect to $g_{\infty}$. Then $[W]$ is given by
\[ W = \phi_{0}\circ \phi^{-1}_{\infty}(g_{\infty}) = \frac{1}{g_{\infty}(q,q)}q(0)\odot q(0)  + \frac{1}{g_{\infty}(p,p)}p(0)\odot p(0).\]
Thus, if $g_{0}=-tg_{\infty}$ then $q(0)$ and $p(0)$ are orthogonal and define a space form vector and point sphere complex for a space form with sectional curvature $\kappa = - g_{0}(q,q)$ and assuming that $p$ is normalised such that $g_{0}(p,p)=\pm1$, $\varepsilon = -g_{0}(p,p)$. Furthermore, by Proposition~\ref{prop:weincurv}, $f$ projects to a surface $\mathfrak{f}$ with constant extrinsic Gauss curvature 
\[ K = - \frac{g_{\infty}(q,q)}{g_{\infty}(p,p)} = -\frac{g_{0}(q,q)}{g_{0}(p,p)}=-\frac{\kappa}{\varepsilon}\]
in this space form, i.e., $\mathfrak{f}$ is a flat front.

Conversely, suppose $f$ projects to a flat front $\mathfrak{f}$ in a space form defined by space form vector $\mathfrak{q}$ and point sphere complex $\mathfrak{p}$, i.e., $\mathfrak{f}$ satisfies 
\[ \varepsilon K + \kappa= 0.\]
Since $\kappa = -|\mathfrak{q}|^{2}$ and $\varepsilon = - |\mathfrak{p}|^{2}$, by Corollary~\ref{cor:weinmidpoly} we have that
\[ p(t) = \mathfrak{p} + t\,|\mathfrak{p}|^{2}\mathfrak{t} \quad \text{and} \quad q(t)= \mathfrak{q} + t\,|\mathfrak{q}|^{2}\mathfrak{f}\]
are linear conserved quantities of the middle pencil. Moreover,
\[ g_{\frac{1}{2}} (p,p)= g_{\frac{1}{2}}(q,p)=g_{\frac{1}{2}} (q,q)=0 .\]
Hence, $g_{\frac{1}{2}}=0$.
\end{proof} 

Now consider the maps 
\[ \phi^{t}_{s}:S^{2}P^{t} \to S^{2}P^{t}(s),\quad \alpha^{t}\odot \beta^{t} \mapsto \alpha^{t}(s)\odot \beta^{t}(s).\]
Then, by extending the action of $\Psi$ to $S^{2}P$ and $T(t)$ to $S^{2}\underline{\mathbb{R}}^{4,2}$ in the standard way, one has that
\[ \phi^{t}_{s} = T(t)\circ \phi_{t+s}\circ\Psi^{-1}.\]
Furthermore, if we define $\phi^{t}_{\infty}:S^{2}P^{t}\to S^{2}_{\infty}P^{t}$ analogously to $\phi_{\infty}$, then as $g^{t}_{\infty}$ is isometric to $g_{\infty}$ via $\Psi$, we have that
\[ (\phi^{t}_{\infty})^{-1}g^{t}_{\infty} = \Psi\circ\phi_{\infty}^{-1}g_{\infty}.\]
Applying Proposition~\ref{prop:olinspace}, we have proved the following proposition:

\begin{proposition}
Suppose that $g_{t}\neq 0$. Then $f^{t}$ projects to a linear Weingarten surface with linear Weingarten condition 
\[ [W^{t}] = [T(t)\phi_{t}(\phi_{\infty}^{-1}g_{\infty})],\]
in any space form determined by space form vector and point sphere complex chosen from $P^{t}(0) =T(t)P(t)$.
\end{proposition}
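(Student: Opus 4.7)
The plan is to derive this proposition as a direct application of Proposition~\ref{prop:olinspace} to the Calapso transform $f^t$, using the structural identities established just above the statement. First, I would note that by Theorem~\ref{thm:calform}, $f^t$ is itself an $\Omega$-surface, and by Proposition~\ref{prop:calpcq} applied to a basis of $P$, the middle pencil of $f^t$ admits the $2$-dimensional space of linear conserved quantities $P^t = \Psi(P)$, where $\Psi(p) = p^t$ and $p^t(s) = T(t)p(t+s)$.

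Next I would check the hypotheses of Proposition~\ref{prop:olinspace} for $(f^t, P^t)$. Using the already-established identity $g^t_s(\alpha^t, \beta^t) = g_{t+s}(\alpha, \beta)$ with $s = 0$, we get $g^t_0 = g_t \neq 0$ by assumption. Taking $s \to \infty$ in this same identity shows that $\Psi$ is an isometry from $(P, g_\infty)$ to $(P^t, g^t_\infty)$, so $g^t_\infty$ is non-degenerate because $g_\infty$ is. Hence Proposition~\ref{prop:olinspace} applies, and $f^t$ projects to a non-tubular linear Weingarten surface, with
\[
[W^t] = [\phi^t_0 \circ (\phi^t_\infty)^{-1}\, g^t_\infty],
\]
in any space form determined by a space form vector and point sphere complex chosen from $P^t(0)$. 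Since the constant term of $p^t$ is $T(t)p(t)$, we indeed have $P^t(0) = T(t)P(t)$.

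Finally, I would simplify the expression for $[W^t]$ using the two structural identities already derived in the excerpt: $\phi^t_s = T(t) \circ \phi_{t+s} \circ \Psi^{-1}$ (specialised to $s=0$ to give $\phi^t_0 = T(t) \circ \phi_t \circ \Psi^{-1}$) and $(\phi^t_\infty)^{-1} g^t_\infty = \Psi \circ \phi_\infty^{-1} g_\infty$. Composing these,
\[
\phi^t_0 \circ (\phi^t_\infty)^{-1} g^t_\infty \;=\; T(t)\circ \phi_t \circ \Psi^{-1} \circ \Psi \circ \phi_\infty^{-1} g_\infty \;=\; T(t)\,\phi_t(\phi_\infty^{-1} g_\infty),
\]
which yields the claimed formula $[W^t] = [T(t)\phi_t(\phi_\infty^{-1} g_\infty)]$.

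There is no real obstacle here: all the analytic work (closedness and degree-preservation under $\Psi$, the orthogonality of $T(t)$, the structural identities for $\phi^t_s$ and $(\phi^t_\infty)^{-1}$, and the main converse Proposition~\ref{prop:olinspace}) has already been assembled. The only thing to be careful about is the hypothesis $g_t \neq 0$, which is precisely what is needed to ensure $g^t_0 \neq 0$ so that $P^t(0)$ contains a valid space form vector and point sphere complex — without this, one would be in the flat-front regime of Proposition~\ref{prop:flatfront} where no linear Weingarten space-form projection is directly guaranteed by Proposition~\ref{prop:olinspace}.
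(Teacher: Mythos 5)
Your proposal is correct and follows essentially the same route as the paper: the paper establishes the identities $g^{t}_{s}(\alpha^{t},\beta^{t})=g_{t+s}(\alpha,\beta)$, $\phi^{t}_{s}=T(t)\circ\phi_{t+s}\circ\Psi^{-1}$ and $(\phi^{t}_{\infty})^{-1}g^{t}_{\infty}=\Psi\circ\phi_{\infty}^{-1}g_{\infty}$ in the preceding discussion and then simply applies Proposition~\ref{prop:olinspace} to $(f^{t},P^{t})$, exactly as you do. Your verification that $g^{t}_{0}=g_{t}\neq 0$, that $g^{t}_{\infty}$ is non-degenerate via the isometry $\Psi$, and the composition yielding $[W^{t}]=[T(t)\phi_{t}(\phi_{\infty}^{-1}g_{\infty})]$ match the paper's argument.
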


In a similar way to~\cite{BHR2010}, we have the following result regarding Calapso transforms of flat fronts:

\begin{corollary}
Suppose that $f$ projects to a flat front and let $t\in\mathbb{R}$ such that $g_{t}\neq 0$. Then $f^{t}$ projects to a flat front in any space form determined by space form vector and point sphere complex chosen from $P^{t}(0) =T(t)P(t)$.
\end{corollary}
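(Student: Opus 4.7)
The plan is to reduce the claim to Proposition~\ref{prop:flatfront} applied to the Calapso transform $f^{t}$. To invoke that proposition for $f^{t}$, I need two ingredients: first, that the middle pencil of $f^{t}$ admits a 2-dimensional space of linear conserved quantities $P^{t}$ with $g^{t}_{0}\neq 0$ and non-degenerate $g^{t}_{\infty}$; second, that there is a non-zero real number $s_{0}$ with $g^{t}_{s_{0}}=0$.

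The first ingredient is essentially free from the preceding discussion: the isomorphism $\Psi:P\to P^{t}$, $p\mapsto p^{t}$, satisfies $g^{t}_{s}(\alpha^{t},\beta^{t})=g_{t+s}(\alpha,\beta)$, so $g^{t}_{0}=g_{t}\neq 0$ by hypothesis, and $\Psi$ is an isometry $(P,g_{\infty})\to(P^{t},g^{t}_{\infty})$, whence $g^{t}_{\infty}$ is non-degenerate because $g_{\infty}$ is.

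For the second ingredient I would apply Proposition~\ref{prop:flatfront} to $f$ itself: since $f$ projects to a flat front, there exists $t_{0}\in\mathbb{R}^{\times}$ with $g_{t_{0}}=0$. Setting $s_{0}:=t_{0}-t$ and using $g^{t}_{s}=g_{t+s}$, one gets $g^{t}_{s_{0}}=g_{t_{0}}=0$. The hypothesis $g_{t}\neq 0$ forces $t\neq t_{0}$ and hence $s_{0}\in\mathbb{R}^{\times}$, as required.

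The conclusion then follows by the converse direction of Proposition~\ref{prop:flatfront}, applied to $f^{t}$ with the 2-dimensional space $P^{t}$: the vanishing of $g^{t}_{s_{0}}$ for some non-zero $s_{0}$ is equivalent to $f^{t}$ projecting to a flat front in any space form determined by a space form vector and point sphere complex chosen from $P^{t}(0)=T(t)P(t)$. There is no serious obstacle here; the only thing to watch is to record carefully that $s_{0}\neq 0$, which is precisely where the assumption $g_{t}\neq 0$ enters (otherwise $f^{t}$ would itself be in the degenerate Calapso fibre and Proposition~\ref{prop:flatfront} would not be applicable).
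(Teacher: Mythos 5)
Your proposal is correct and follows essentially the same route as the paper: transport the vanishing of $g_{t_{0}}$ (which exists by Proposition~\ref{prop:flatfront} applied to $f$) through the isometry $\Psi$ to get $g^{t}_{t_{0}-t}=0$, then apply Proposition~\ref{prop:flatfront} to $f^{t}$. Your explicit checks that $g^{t}_{0}=g_{t}\neq 0$, that $g^{t}_{\infty}$ is non-degenerate, and that $t_{0}-t\neq 0$ merely spell out details the paper leaves implicit.
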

\begin{proof}
Recall that for any $s\in\mathbb{R}$, $g^{t}_{s}$ is isometric to $g_{t+s}$ via $\Psi$. Now by Proposition~\ref{prop:flatfront}, there exists $t_{0}\in\mathbb{R}^{\times}$ such that $g_{t_{0}}=0$. Therefore, $g^{t}_{t_{0}-t}=0$ and it follows by Proposition~\ref{prop:flatfront} that $f^{t}$ projects to a flat front in $P^{t}(0)$. 
\end{proof}

To summarise this section we have the following theorem:

\begin{theorem}
Calapso transforms give rise to a Lawson correspondence for non-tubular linear Weingarten surfaces.  
\end{theorem}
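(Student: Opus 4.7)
The plan is to package the content of the two preceding results (the characterisation of non-tubular linear Weingarten surfaces via Proposition~\ref{prop:olinspace}, and the proposition showing that $f^{t}$ projects to a linear Weingarten surface whenever $g_{t}\neq 0$) into the form of a Lawson-type correspondence. Essentially everything needed is already in hand; the task is to verify that the two non-degeneracy conditions distinguishing the non-tubular case are preserved under Calapso transformation for all but at most one value of the spectral parameter, and then to interpret the resulting family.

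First I would start from a non-tubular linear Weingarten surface $\mathfrak{f}$ in a space form with space form vector $\mathfrak{q}$ and point sphere complex $\mathfrak{p}$. By Proposition~\ref{prop:lwlinspace}, the Legendre lift $f$ is an $\Omega$-surface whose middle pencil carries a $2$-dimensional space $P$ of linear conserved quantities with $g_{0}\neq 0$ and $g_{\infty}$ non-degenerate. Applying the preceding proposition produces, for each $t\in\mathbb{R}$, a $2$-dimensional space $P^{t}$ of linear conserved quantities of the middle pencil of $f^{t}$, and the isomorphism $\Psi\colon P\to P^{t}$, $p\mapsto p^{t}$, transports the pencil of metrics as $g^{t}_{s}(\alpha^{t},\beta^{t})=g_{t+s}(\alpha,\beta)$. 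In particular $\Psi$ is an isometry from $(P,g_{\infty})$ to $(P^{t},g^{t}_{\infty})$, so $g^{t}_{\infty}$ is automatically non-degenerate.

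Next I would note that $g_{t}=g_{0}+tg_{\infty}$ is an affine pencil on the $2$-dimensional space $P$, so the condition $g^{t}_{0}=g_{t}=0$ can hold for at most one $t_{0}\in\mathbb{R}$ (and, by Proposition~\ref{prop:flatfront}, precisely when $\mathfrak{f}$ is a flat front, in which case $t_{0}$ is the exceptional Lawson parameter). For every $t\neq t_{0}$, both hypotheses of Proposition~\ref{prop:olinspace} are satisfied by $P^{t}$, so $f^{t}$ projects, in any space form determined by a space form vector and point sphere complex chosen from $P^{t}(0)=T(t)P(t)$, to a non-tubular linear Weingarten surface satisfying the linear Weingarten condition $[W^{t}]=[T(t)\phi_{t}(\phi_{\infty}^{-1}g_{\infty})]$ displayed in the preceding proposition.

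Finally I would interpret this family as a Lawson correspondence: as $t$ varies, the ambient space forms $P^{t}(0)=T(t)P(t)$ and linear Weingarten conditions $[W^{t}]$ genuinely vary, while the underlying $\Omega$-surface structure together with the quadratic differential $q^{t}=q$ is preserved; this exhibits all members of the family as mutually corresponding non-tubular linear Weingarten surfaces in generally distinct space forms, which is precisely the sense in which the classical Lawson correspondence between minimal and CMC surfaces generalises. The only real obstacle is the bookkeeping ensuring that $g^{t}_{0}$ is $g_{t}$ rather than $g_{0}$ and that $g^{t}_{\infty}$ inherits non-degeneracy from $g_{\infty}$ via $\Psi$; once this is in place, the theorem follows immediately from Proposition~\ref{prop:olinspace} applied to each $f^{t}$ with $t\neq t_{0}$.
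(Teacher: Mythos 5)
Your proposal is correct and follows essentially the same route as the paper: the theorem there is stated as a summary of exactly the ingredients you assemble — the transported space $P^{t}$ with $g^{t}_{s}$ isometric to $g_{t+s}$ via $\Psi$, the pencil $g_{t}=g_{0}+tg_{\infty}$ vanishing for at most one parameter (the flat front case of Proposition~\ref{prop:flatfront}), and Proposition~\ref{prop:olinspace} applied to $f^{t}$ yielding the Weingarten condition $[W^{t}]$ in the space forms determined by $T(t)P(t)$. No gaps.
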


\subsubsection{Darboux transformations}
Suppose that $\hat{f}$ is an umbilic-free Darboux transform of $f$ with parameter $m$. Let $\hat{s}\le \hat{f}$ be the parallel subbundle of $d+m\eta^{mid}$ and let $s\le f$ be the parallel subbundle of $d+m\hat{\eta}^{mid}$. By Proposition~\ref{prop:darbpoly}, if $p$ is a linear conserved quantity of $d+t\eta^{mid}$ and $p(m)\in\Gamma \hat{s}^{\perp}$, then $\hat{p}$ is a linear conserved quantity of $d+t\hat{\eta}^{mid}$, where 
\[ \hat{p}(t) = \Gamma^{\hat{s}}_{s}(1-t/m)p(t).\]
Furthermore, $\hat{p}(0)=p(0)$ and $(\hat{p}(t),\hat{p}(t))=(p(t),p(t))$. 
Now, if we assume that $P(m)\le \hat{s}^{\perp}$, then $\hat{P}$ is a 2-dimensional space of linear conserved quantities of the middle pencil of $\hat{f}$, where we define $\hat{P}$ via the isomorphism
\[ \Upsilon :P\to \hat{P}, \quad p\mapsto \hat{p}. \]
For each $t\in\mathbb{R}$ we shall let $\Upsilon_{t}$ denote the induced isomorphism between the subbundles $P(t)$ to $\hat{P}(t)$. Then $\hat{P}(0)=P(0)$ and $\Upsilon_{0} = id_{P(0)}$. Furthermore, if we let $\{\hat{g}_{t}\}_{t\in\mathbb{R}\cup\{\infty\}}$ denote the pencil of metrics on $\hat{P}$, then, as $(\hat{p}(t),\hat{p}(t))=(p(t),p(t))$, we have that $(P,g_{t})$ is isometric to $(\hat{P},\hat{g}_{t})$ via $\Upsilon$ for all $t\in\mathbb{R}\cup\{\infty\}$. In particular, we have that $\hat{g}_{0}\neq 0$ and $\hat{g}_{\infty}$ is non-degenerate. Therefore, by Theorem~\ref{prop:olinspace}, $\hat{f}$ projects to linear Weingarten surfaces in any space form defined by space form vector and point sphere complex chosen from $\hat{P}(0)=P(0)$.

As for $f$, define $\hat{\phi}_{t}:S^{2}\hat{P}\to S^{2}\hat{P}(t)$ and $\hat{\phi}_{\infty}:S^{2}\hat{P}\to S^{2}_{\infty}\hat{P}$ accordingly. Then for each $t\in\mathbb{R}$,
\[ \hat{\phi}_{t} = \Upsilon_{t}\circ\phi_{t}\circ \Upsilon^{-1}\]
and $\hat{\phi}_{0} = \phi_{0}$. Furthermore, 
\[ \hat{\phi}^{-1}_{\infty} (\hat{g}_{\infty}) = \phi^{-1}_{\infty}(g_{\infty}).\]
Then the linear Weingarten condition for $\hat{f}$ is given by 
\[ [\widehat{W}] = [\hat{\phi}_{0}\circ \hat{\phi}^{-1}_{\infty} (\hat{g}_{\infty})]=[\phi_{0}\circ \phi^{-1}_{\infty} (g_{\infty})].\]
Therefore, we have proved the following proposition:

\begin{proposition}
$\hat{f}$ is a linear Weingarten surface with the same linear Weingarten condition as $f$ in any space form determined by space form vector and point sphere complex chosen from $P(0)$.
\end{proposition}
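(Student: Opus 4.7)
The plan is to run the argument that the paper has just been assembling in the preceding paragraphs, organised as a clean four-step proof built on Proposition~\ref{prop:darbpoly} and Proposition~\ref{prop:olinspace}.

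First I would transport the linear conserved quantities. Given any $p\in P$, the hypothesis $P(m)\le\hat{s}^{\perp}$ supplies the extra condition $p(m)\in\Gamma\hat{s}^{\perp}$ needed in Proposition~\ref{prop:darbpoly}, so $\hat{p}(t):=\Gamma^{\hat{s}}_{s}(1-t/m)p(t)$ is a linear conserved quantity of $d+t\hat{\eta}^{mid}$ satisfying $\hat{p}(0)=p(0)$ and $(\hat{p}(t),\hat{p}(t))=(p(t),p(t))$. Defining $\Upsilon:P\to\hat{P}$ by $p\mapsto\hat{p}$ produces a $2$-dimensional space $\hat{P}$ of linear conserved quantities for the middle pencil of $\hat{f}$, and $\Upsilon$ is an isomorphism because $\Gamma^{\hat{s}}_{s}(1-t/m)$ is invertible.

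Next I would transfer the metric structure. For each $t\in\mathbb{R}\cup\{\infty\}$, the identity $(\hat{p}(t),\hat{p}(t))=(p(t),p(t))$ upgrades (by polarisation) to $\hat{g}_{t}(\Upsilon\alpha,\Upsilon\beta)=g_{t}(\alpha,\beta)$, so $\Upsilon$ is a pencil isometry. In particular $\hat{g}_{0}\neq 0$ and $\hat{g}_{\infty}$ is non-degenerate, hence Proposition~\ref{prop:olinspace} applies to $\hat{f}$ and delivers a non-tubular linear Weingarten projection in every space form built from a space form vector and point sphere complex chosen in $\hat{P}(0)$. Since $\hat{p}(0)=p(0)$, the induced map $\Upsilon_{0}:P(0)\to\hat{P}(0)$ is the identity, so $\hat{P}(0)=P(0)$ and the admissible space forms are exactly those determined by $P(0)$, as required.

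Finally I would identify the two Weingarten conditions. The expression
\[
[\widehat{W}]=[\hat{\phi}_{0}\circ\hat{\phi}_{\infty}^{-1}(\hat{g}_{\infty})]
\]
from Proposition~\ref{prop:olinspace} has $\hat{\phi}_{0}=\phi_{0}$ because $\Upsilon_{0}=\mathrm{id}_{P(0)}$, while $\Upsilon$ being an isometry for $g_{\infty}$ gives $\hat{\phi}_{\infty}^{-1}(\hat{g}_{\infty})=\Upsilon\cdot\phi_{\infty}^{-1}(g_{\infty})$; combining the two yields $[\widehat{W}]=[\phi_{0}\circ\phi_{\infty}^{-1}(g_{\infty})]=[W]$.

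The main thing to be careful about is the compatibility between the two descriptions of $\hat{P}$: it is defined abstractly via $\Upsilon$, but one must check it really coincides with the bundle of linear conserved quantities of the middle pencil $d+t\hat{\eta}^{mid}$ with the correct fibre at $t=0$ and the correct asymptotic metric. This is exactly what the constant-term equality $\hat{p}(0)=p(0)$ and the norm-preservation in Proposition~\ref{prop:darbpoly} guarantee, so no extra work is required once those ingredients are in hand; the rest is bookkeeping with $\phi_{0}$ and $\phi_{\infty}$.
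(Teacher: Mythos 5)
Your proposal is correct and follows essentially the same route as the paper: transport each $p\in P$ via $\Gamma^{\hat{s}}_{s}(1-t/m)$ using Proposition~\ref{prop:darbpoly} under the hypothesis $P(m)\le\hat{s}^{\perp}$, note that $\Upsilon$ is an isometry of the metric pencils with $\Upsilon_{0}=\mathrm{id}_{P(0)}$, apply Proposition~\ref{prop:olinspace} to $\hat{P}$, and compare the resulting expressions for $[\widehat W]$ and $[W]$ via $\hat{\phi}_{0}$ and $\hat{\phi}_{\infty}$. Your slightly more explicit bookkeeping (writing $\hat{\phi}_{\infty}^{-1}(\hat{g}_{\infty})=\Upsilon\cdot\phi_{\infty}^{-1}(g_{\infty})$ before cancelling $\Upsilon$ against $\hat{\phi}_{0}=\Upsilon_{0}\circ\phi_{0}\circ\Upsilon^{-1}$) is just a more careful rendering of the identifications the paper states implicitly.
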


By Lemma~\ref{lem:darbcons} one deduces that, for each $m\in\mathbb{R}^{\times}$, there exists a 2-parameter family of Darboux transforms with parameter $m$ such that $P(m)\le s^{\perp}$. Therefore:

\begin{theorem}
\label{thm:lindarb}
A linear Weingarten surface possesses a 3-parameter family of Darboux transforms that satisfy the same linear Weingarten condition as the initial surface. 
\end{theorem}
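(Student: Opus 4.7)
The plan is to observe that the paragraph preceding the theorem already does most of the work: once we have a Darboux transform $\hat{f}$ of $f$ with parameter $m \in \mathbb{R}^{\times}$ whose parallel subbundle $\hat{s}\le \hat{f}$ of $d+m\eta^{mid}$ satisfies $P(m) \le \hat{s}^{\perp}$, Proposition~\ref{prop:darbpoly} yields a $2$-dimensional space $\hat{P}$ of linear conserved quantities of the middle pencil of $\hat{f}$, isometric to $P$, and the proposition just above shows $\hat{f}$ projects to a linear Weingarten surface with the same Weingarten condition as $f$. The theorem therefore reduces to a parameter count for such admissible Darboux transforms.

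To carry out the count, I would fix $m \in \mathbb{R}^{\times}$ and parametrise the null rank $1$ parallel subbundles of $d+m\eta^{mid}$ via Lemma~\ref{lem:darbcons}: each such $\hat{s}$ has the form $\hat{s} = T^{-1}(m)\hat{L}$ for a unique $\hat{L} \in \mathbb{P}(\mathcal{L})$. Because $T(m)$ is orthogonal, the constraint $P(m) \le \hat{s}^{\perp}$ becomes $\hat{L} \in \mathbb{P}(\mathcal{L} \cap \tilde{P}^{\perp})$, where $\tilde{P} := T(m)P(m)$ is a constant $2$-plane in $\mathbb{R}^{4,2}$ whose induced inner product equals $g_{m} = g_{0} + m\, g_{\infty}$ (again by orthogonality of $T(m)$). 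Since $g_{\infty}$ is non-degenerate, the form $g_{m}$ is non-degenerate for all $m$ outside a single exceptional value, and at every such $m$ the orthogonal complement $\tilde{P}^{\perp}$ is a non-degenerate $4$-plane whose projective lightcone is a $2$-dimensional quadric in $\mathbb{P}(\mathcal{L})$. This would produce a $2$-parameter family of admissible $\hat{L}$ for each $m$, and allowing $m \in \mathbb{R}^{\times}$ to vary supplies the third parameter, yielding a $3$-parameter family of Darboux transforms in total.

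The chief technical step will be verifying that $\mathbb{P}(\mathcal{L} \cap \tilde{P}^{\perp})$ is genuinely $2$-dimensional rather than lower-dimensional or empty, i.e.\ ruling out the case that $\tilde{P}^{\perp}$ is definite. I expect to handle this by checking the signature of $\tilde{P}$ in each geometric subcase --- controlled by the signs of $|\mathfrak{p}|^{2}$, $|\mathfrak{q}|^{2}$ and the discriminant $b^{2}-ac$ --- and confirming that $\tilde{P}^{\perp}$ always has indefinite signature $(2,2)$ or $(3,1)$, so that its projective lightcone is a genuine $2$-dimensional quadric. The finitely many $m$ at which $g_{m}$ might degenerate do not affect the parameter count, and the excluded subset of $\hat{L}$ giving non-umbilic-free or non-transverse $\hat{f}$ is of lower dimension and so does not reduce the count either.
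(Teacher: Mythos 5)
Your proposal follows essentially the same route as the paper: the paragraph preceding the theorem does the work of transporting the $2$-dimensional space $P$ of linear conserved quantities (via Proposition~\ref{prop:darbpoly}) whenever $P(m)\le\hat{s}^{\perp}$, and the count is then exactly the paper's appeal to Lemma~\ref{lem:darbcons}, i.e.\ $\hat{L}$ ranging over the projective lightcone of the constant $4$-plane $(T(m)P(m))^{\perp}$ gives $2$ parameters, with $m$ supplying the third. Your added signature discussion is a sound (indeed more careful) elaboration of what the paper leaves implicit; only note that $\det g_{m}$ is quadratic in $m$, so there may be up to two degenerate values rather than one, which does not affect the count.
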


In~\cite[\S 273]{B1922} and~\cite[\S 398]{B1903}, Bianchi constructs a 3-parameter family of Ribaucour transformations of pseudospherical ($K=-1$) surfaces in Euclidean space into surfaces of the same kind by performing two successive complex conjugate B\"acklund transformations. This has been investigated recently in~\cite{GK2016}. This transformation preserves $I+I\! I \! I$. On the other hand Proposition~\ref{prop:weinmid} tells us that $I +I\! I \! I$ coincides with the quadratic differentials of the underlying $\Omega$-surfaces. By applying Proposition~\ref{prop:ribtodarb}, one deduces that Bianchi's family of transformations is included in the 3-parameter family of Darboux transforms detailed in Theorem~\ref{thm:lindarb}. 

A similar transformation exists for spherical ($K=1$) surfaces in Euclidean space. It was shown in~\cite{HP1997} and subsequently~\cite{IK2005} that these transformations are induced by the Darboux transformations of their parallel CMC-surfaces. Recalling from Theorem~\ref{thm:darbisosurf} that Darboux transforms of isothermic surfaces are Darboux transforms of their Legendre lifts, one deduces that these transformations are also included in the family detailed in Theorem~\ref{thm:lindarb}. 

\bibliographystyle{hplain}
\bibliography{bibliography2015}

\end{document}